\newcommand{\mysection}[1]{\section{#1}
      \setcounter{equation}{0}}
\newcommand\cbrk{\text{$]$\kern-.15em$]$}} 
\newcommand\opar{\text{\raise.2ex\hbox{${\scriptstyle | }$}\kern-.34em$($} }
\DeclareMathOperator*{\esssup}{ess\,sup}
\newtheorem{theorem}{Theorem}[section]
\newtheorem{lemma}[theorem]{Lemma}
\newtheorem{proposition}[theorem]{Proposition}
\newtheorem{corollary}[theorem]{Corollary}
\theoremstyle{definition}
\newtheorem{assumption}{Assumption}[section]
\newtheorem{definition}{Definition}[section]
\theoremstyle{remark}
\newtheorem{remark}{Remark}[section]
\newcommand{\R}{\mathbb{{R}}}
\newcommand{\N}{\mathbb{{N}}}
\newcommand\bH{\mathbb{H}}
\newcommand\bI{\mathbb{I}}
\newcommand\bL{\mathbb{L}}
\newcommand\bR{\mathbb{R}}
\newcommand\bS{\mathbb{S}}
\newcommand\bU{\mathbb{U}}
\newcommand\bV{\mathbb{V}}
\newcommand\bW{\mathbb{W}}
\newcommand\cA{\mathcal{A}}
\newcommand\cB{\mathcal{B}}
\newcommand\cF{\mathcal{F}}
\newcommand\cH{\mathcal{H}}
\newcommand\cJ{\mathcal{J}}
\newcommand\cK{\mathcal{K}}
\newcommand\cL{\mathcal{L}}
\newcommand\cM{\mathcal{M}}
\newcommand\cN{\mathcal{N}}
\newcommand\cP{\mathcal{P}}
\newcommand\cZ{\mathcal{Z}}
 \newcommand{\sumstar}
 {\operatornamewithlimits{\sum@\kern-.2em\raise1ex\hbox{*}}}
\newcommand{\<}{\langle}
\renewcommand{\>}{\rangle}
\begin{document}

\author[I. Gy\"ongy]{Istv\'an Gy\"ongy}
\address{School of Mathematics and Maxwell Institute, University of Edinburgh, Scotland, United Kingdom.}
\email{i.gyongy@ed.ac.uk}

\author[S. Wu]{Sizhou Wu}
\address{School of Mathematics,
University of Edinburgh,
King's  Buildings,
Edinburgh, EH9 3JZ, United Kingdom}
\email{Sizhou.Wu@ed.ac.uk}

\keywords{Stochastic integro-differential equation, Bessel potential spaces, Interpolation couples}

\subjclass[2010]{Primary 45K05, 60H15, 60H20, 60J75; Secondary 35B65}

\begin{abstract} A class of (possibly) degenerate stochastic 
integro-differential equations of parabolic type is considered, 
which includes the Zakai equation in nonlinear filtering for jump diffusions. 
Existence and uniqueness of the solutions are established in Bessel potential  
spaces. 
\end{abstract}

\title[Integro-Differential Equations]{On $L_p$-Solvability 
of Stochastic Integro-Differential Equations}

\maketitle

\mysection{Introduction}
We consider the  equation 
\begin{align}                                              
du_t(x)= &(\cA_tu_t(x)+f_t(x))\,dt
+\left( \cM^r_tu_t(x)+g^r_t(x) \right)\,dw^r_t\nonumber\\
&+\int_Z\left( u_{t-}(x+\eta_{t,z}(x))-u_{t-}(x)+h_t(x,z) \right)\,\tilde{\pi}(dz,dt)  \label{eq1}
\end{align}
for $(t,x)\in [0,T]\times \bR^d:=H_T$ with initial condition 
\begin{equation}                                                                                                 \label{ini1}
u_0(x)=\psi(x) \quad \text{for $x\in \bR^d$},  
\end{equation}
on a filtered probability space $(\Omega,\cF,P,(\cF_t)_{t\geq0})$,  
carrying a sequence $w=(w^i_t)_{i=1}^{\infty}$ of independent 
$\cF_t$-Wiener processes and an $\cF_t$-Poisson martingale measure 
$\tilde\pi(dz,dt)=\pi(dz,dt)-\mu(dz)\otimes dt$, where $\pi(dz,dt)$ is an 
$\cF_t$-Poisson random measure with a $\sigma$-finite intensity measure $\mu(dz)$ 
on a measurable space $(Z,\cZ)$ with countably generated $\sigma$-algebra $\cZ$. 
We note that here, and later on, the summation convention is used with respect to 
repeated (integer-valued) indices and multi-numbers. 

In the above equation $\cA_t$ is an integro-differential operator 
of the form $\cA_t=\cL_t+\cN_t^{\xi}+\cN^{\eta}_t$ with 
a second order differential operator 
$$
\cL_t=a^{ij}_t(x)D_{ij}+b^i_t(x)D_i+c_t(x),  
$$
and integral operators $\cN^{\xi}_t$ and $\cN^{\eta}_t$ defined by 
$$                                                     
\cN^{\xi}_t\varphi(x)=\int_Z \varphi(x+\xi_{t,z}(x))-\varphi(x)
-\xi_{t,z}(x)\nabla\varphi(x)\,\nu(dz)
$$
\begin{equation}                                                               \label{def N}
\cN^{\eta}_t\varphi(x)=\int_Z \varphi(x+\eta_{t,z}(x))-\varphi(x)
-\eta_{t,z}(x)\nabla\varphi(x)\,\mu(dz)
\end{equation}
for a suitable class of real-valued functions $\varphi$ on $\bR^d$ for each $t\in[0,T]$, 
where $\nu(dz)$ is a fixed $\sigma$-finite measure on $(Z,\cZ)$. 
For each integer $r\geq1$ the operator $\cM_t^r$ is a first order differential operator 
of the form 
\begin{equation*}                                                           
\cM^r_t=\sigma^{ir}_t(x)D_i+\beta_t^r(x).
\end{equation*}
The coefficients $a^{ij}$, $b^i$, $c$, $\sigma^{ir}$ and $\beta^{r}$ are real functions 
on $\Omega\times H_T$ for $i,j=1,2,...,d$ and integers $r\geq1$,  and 
$\eta=(\eta^i_{t,z}(x))$ and $\xi=(\xi^i_{t,z}(x))$ are $\bR^d$-valued 
functions  
of $(\omega,t,x,z)\in\Omega\times H_T\times Z$. The free terms $f$ and $g^r$ 
are real functions defined 
on $\Omega\times H_T$ for every $r\geq1$, and $h$ is a real function defined 
on $\Omega\times H_T\times Z$.  The stochastic differentials in equation 
\eqref{eq1} are understood in It\^o's sense, see the definition of a solution in the next 
section. 

We are interested in the solvability of the above problem in $L_p$-spaces. 
We note that equation \eqref{eq1} may degenerate, i.e., the pair of linear operators 
$(\cL,\cM)$ satisfies only the {\it stochastic parabolicity} condition, 
Assumption \ref{assumption L} below, and the operator $\cN^{\xi}$ 
may also degenerate. Our main result, Theorem \ref{theorem main} states that under 
the stochastic parabolicity condition  on the operators $(\cL,\cM)$, 
$\cN^{\xi}$, $\cN^{\eta}$, and appropriate 
regularity conditions on their coefficients and on the initial and free data,  
the Cauchy problem \eqref{eq1}-\eqref{ini1} has a unique generalised solution 
$u=(u_t)_{t\in[0,T]}$ for a given $T$. Moreover, this theorem 
describes the temporal and spatial regularity of $u$ in terms 
of Bessel potential spaces $H^n_p$, and presents also a supremum estimate in time.   
The uniqueness of the solution 
is proved by an application of a theorem on It\^o's 
formula from \cite{GW2019}, which generalises a theorem of Krylov 
in \cite{K2010} to the case of jump processes. 
The existence of a generalised 
solution is proved in several steps. First we obtain 
a priori estimates in Sobolev spaces $W^n_p$ 
for integers $n\in[0,m]$ if $p=2^k$ for an integer $k\geq1$, 
where $m$ is a parameter 
measuring the spatial smoothness of the coefficients and the data 
in \eqref{eq1}-\eqref{ini1}. These estimates allow us to 
construct a generalised solution by standard methods 
of approximating 
\eqref{eq1}-\eqref{ini1} with non-degenerate equations 
with smooth coefficients and compactly 
supported smooth data in $x\in\bR^d$, and passing 
to the weak limit in appropriate spaces. 
Thus we see that a solution operator, mapping 
the initial and free data into the solution 
of \eqref{eq1}-\eqref{ini1}, exists and it is a bounded linear operator 
in appropriate  
$L_p$-spaces if $p=2^k$ for an integer $k\geq1$. 
Hence by interpolation we get our 
a priori estimates in Bessel potential spaces $H^n_p$ 
for any given $p\geq 2$ and 
real number $n\in[0,m]$. 
We obtain essential supremum estimates in time for the solution 
from integral estimates, by using the simple fact 
that the essential supremum of Lebesgue functions 
over an interval $[0,T]$ is the limit of their 
$L_r([0,T])$-norm as $r\to\infty$. 
Hence we get the temporal regularity of 
the solution formulated in our main theorem by using 
Theorem 2.2 on It\^o's formula  in \cite{GW2019}, 
an extension of Lemma 5.3 in \cite{DGW} 
and a well-known interpolation inequality,  
Theorem \ref{theorem i1}(v) below. 

Concerning the above construction of a generalised 
solution in $L_p$-spaces we would like 
to emphasise that first we can get the necessary a priori 
estimates only if $p=2^k$ for an integer $k\geq1$ 
and we need to use interpolation via the solution 
operator to get these estimates 
for arbitrary $p\geq2$. We note that a similar situation arose in 
$L_p$-estimates in finite difference approximations for 
stochastic PDEs in \cite{GG}.

In the literature there are many results 
on stochastic integral equations  
with unbounded operators, driven by 
jump processes and martingale measures. 
A general existence and uniqueness theorem 
for stochastic evolution equations 
with nonlinear
operators satisfying stochastic coercivity 
and monotonicity conditions is proved in 
\cite{G1982}, which generalises some 
results in \cite{Pa} and \cite{KR1979} to  
stochastic evolution equations driven 
by semimartingales and random measures. 
This theorem implies the existence of a unique generalised solution 
to \eqref{eq1}-\eqref{ini1} in $L_2$-spaces when instead of 
the stochastic parabolicity condition 
\eqref{parabolicity} in Assumption \ref{assumption L} below, the 
strong stochastic parabolicity condition, 
$$
\sum_{i,j=1}^d\alpha^{ij}z^iz^j\geq\lambda \sum_{i=1}^d|z^i|^2
\quad\text{for all $z=(z^1,z^2,...,z^d)\in\bR^d$}
$$
with a constant $\lambda>0$ is assumed on $(\cL,\cM)$.  
Under the weaker condition of stochastic parabolicity 
the solvability of \eqref{eq1}-\eqref{ini1} 
in $L_2$-spaces is investigated and existence 
and uniqueness theorems are presented 
in \cite{Da} and \cite{LM2015}. The first result  
on solvability in $L_p$-spaces for the stochastic 
PDE problem \eqref{eq1}-\eqref{ini1} with 
$\xi=\eta=0$ and $h=0$ was obtained in \cite{KR}, 
and was improved in \cite{GK2003}. However, there is a gap 
in the proof of the crucial a priori estimate in 
\cite{KR}. This gap is filled in 
and more general results on solvability in 
$L_p$-spaces for systems of stochastic 
PDEs driven by Wiener processes are presented 
in \cite{GGK}. As far as we know Theorem \ref{theorem main} 
below is the first result 
on solvability in $L_p$-spaces of stochastic integro-differential 
equations (SIDEs) without 
any non-degeneracy conditions. It generalises 
the main result of \cite{DGW} on deterministic 
integro-differential equations to SIDEs. 
Our motivation to study equation \eqref{eq1} comes from nonlinear filtering of 
jump-diffusion processes, and we want to apply 
Theorem \ref{theorem main} to filtering problems 
in a continuation of the present paper. 

We note that under non-degeneracy conditions SIDEs 
have been investigated with various generalities 
in the literature, and very nice results on their solvability 
in $L_p$-spaces have recently been obtained. In particular,  
$L_p$-theories for 
such equations have been developed in 
\cite{KK}, \cite{KL}, \cite{MPh1}, 
\cite{MPh3} and \cite{MP4}, which extend some results 
of the $L_p$ theory of Krylov 
\cite{K1996} to certain classes of equations with non local operators. 
See also \cite{CL}, \cite{DK} and \cite{Z} 
in the case of deterministic equations.  
Nonlinear filtering problems and the related equations 
describing the conditional distributions 
have been extensively studied in the literature. 
For results in the case of jump-diffusion 
models see, for example, 
\cite{AB}, \cite{B}, \cite{FH} and \cite{Gr1976}.

In conclusion, we introduce some notions and notations used throughout this paper.
All random elements are given on the filtered probability space 
$(\Omega,\cF,P,(\cF_t)_{t\geq0})$. We assume that $\cF$ is $P$-complete, 
the filtration $(\cF_t)_{t\geq0}$ is right-continuous, and $\cF_0$ contains 
all $P$-zero sets of $\cF$. The $\sigma$-algebra  
of the predictable subsets of $\Omega\times[0,\infty)$ is denoted by $\cP$. 
For notations, notions and results concerning L\'evy processes, Poisson 
random measures and stochastic integrals we refer to \cite{A2009}, 
\cite{BMR} and \cite{IW2011}.

For vectors $v=(v^i)$ and $w=(w^i)$ in $\bR^d$ we 
use the notation $vw=\sum_{i=1}^mv^iw^i$ and $|v|^2=\sum_i|v^i|^2$. 
For real-valued Lebesgue measurable functions 
$f$ and $g$ defined on $\bR^d$ the notation $(f,g)$ 
means the integral of the product $fg$ over $\bR^d$ 
 with respect to the Lebesgue measure on $\bR^d$.   
A finite list $\alpha=\alpha_1\alpha_2,...,\alpha_n$  of numbers $\alpha_i\in\{1,2,...,d\}$ 
is called a multi-number of length $|\alpha|:=n$, and the notation 
$$
D_{\alpha}:=D_{\alpha_1}D_{\alpha_2}...D_{\alpha_n}
$$ 
is used for integers $n\geq1$, 
where 
$$
D_i=\frac{\partial}{\partial x^i}, \quad \text{for $i\in\{1,2,...,d\}$}. 
$$
We use also the multi-number $\epsilon$ of length $0$ 
such that $D_{\epsilon}$ means the identity operator. 
For an integer $n\geq0$ and functions $v$ on $\bR^d$, whose 
partial derivatives up to order $n$ are functions, 
we use the notation $D^nv$ for the collection 
$
\{D_{\alpha}v:|\alpha|=n\}, 
$
and define
$$
|D^nv|^2=\sum_{|\alpha|=n}|D_{\alpha}v|^2. 
$$
For differentiable functions $v=(v^1,...,v^d):\bR^d\to \bR^d$ 
the notation $Dv$ means the Jacobian matrix 
whose $j$-th entry in the $i$-th row is $D_jv^i$. 

The space of smooth functions $\varphi=\varphi(x)$ with compact support on 
the $d$-dimensional Euclidean space $\bR^d$ is denoted by 
$C_0^{\infty}$. For $p, q\geq 1$ we denote by $\cL_{q}=\cL_{q}(Z, \bR)$ 
the Banach spaces of $\bR$-valued $\cZ$-measurable functions $h=h(z)$ of $z\in Z$
such that 
$$ 
|h|^{q}_{\cL_{q}}=\int_{\bR^d}|h(z)|^{q}\,\mu(dz)<\infty. 
$$
The notation 
$\cL_{p,q}$ means the space $\cL_{p}\cap\cL_{q}$ with the norm 
$$
|v|_{\cL_{p,q}}=\max(|v|_{\cL_{p}},|v|_{\cL_{q}}) \quad\text{for $v\in \cL_{p}\cap\cL_{q}$}.
$$
The space of sequences
$\nu=(\nu^{1},\nu^{2},...)$
of real numbers $\nu^{k}$ with finite norm
$$
|\nu|_{l_2}=\big(
\sum_{k=1}^{\infty}|\nu^k|^{2}\big)^{1/2}
$$
is denoted by $l_2$. 

The Borel $\sigma$-algebra of a separable 
Banach space $V$ is denoted by $\cB(V)$, and for $p\geq0$ 
the notations $L_p([0,T],V)$ and $L_p(\bR^d,V)$ are used 
for the space of $V$-valued Borel-measurable functions $f$ on $[0,T]$ and $g$ on 
$\bR^d$, respectively, such that $|f|_V$ and $|g|_V$ have finite Lebesgue integral 
over $[0,T]$ and $\bR^d$, respectively.  
For $p\geq 1$ and $g\in L_p(\bR^d,V)$ 
we use the notation $|f|_{L_p}$, defined by 
$$
|f|^p_{L_p}=\int_{\bR^d}|f(x)|_V^p\,dx<\infty.
$$
In the sequel, $V$ will be $\bR$, $l_2$ or $\cL_{p,q}$. 
For integer $n\geq 0$ the space of functions from $L_p(\bR^d,V)$, 
whose generalised derivatives up to order $n$ are also in $L_p(\bR^d,V)$, 
is denoted by $W^n_p=W^n_p(\bR^d,V)$ with the norm
$$
|f|_{W^n_p}:=\sum_{|\alpha|\leq n}|D_\alpha f|_{L_p}.
$$
By definition $W^0_p(\bR^d,V)=L_p(\bR^d,V)$. Moreover, 
we use $\bW^n_p=\bW^n_p(V)$ to denote the space of 
$\cP$-measurable functions mapping from $\Omega\times [0,T]$ 
into $W^n_p=W^n_p(\bR^d,V)$ such that 
$$
|f|^p_{\bW^n_p}:=E\int_0^T |f|^p_{W^n_p} \,dt<\infty,
$$ 
and we use $\bL_p$ to denote $\bW^0_p$. 
For $m\in\bR$ and $p\in(1,\infty)$ we use the notation $H^m_p=H^m_p(\bR^d;V)$ 
for the Bessel potential space 
with exponent $p$ and order $m$, defined as the space of $V$-valued
generalised functions  $\varphi$ on $\bR^d$ such that 
$$
(1-\Delta)^{m/2}\varphi\in L_p\quad\text{and}\quad 
|\varphi|_{H^m_p}:=|(1-\Delta)^{m/2}\varphi|_{L_p}<\infty,  
$$ 
where $\Delta=\sum_{i=1}^dD_i^2$. Moreover, we use $\bH^m_p$ to denote 
the space of $\cP$-measurable functions from $\Omega\times [0,T]$ to $H^m_p$ such that 
$$
|f|^p_{\bH^n_p}:=E\int_0^T |f_t|^p_{H^n_p} \,dt<\infty.
$$ 
We will often omit the target space $V$ in the notations $W^n_p(V)$, $H^m_p(V)$, $\bW^n_p(V)$ and $\bH^m_p(V)$
for convenience if $V=\bR$.
When $V=\cL_{p,q}$ we use $W^n_{p,q}$, $H^m_{p,q}$ $\bW^n_{p,q}$ and $\bH^m_{p,q}$ to denote 
$W^n_p(\cL_{p,q})$, $H^m_p(\cL_{p,q})$ $\bW^n_p(\cL_{p,q})$ and $\bH^m_p(\cL_{p,q})$ respectively, and we use $\bL_{p,q}$ to denote $\bW^0_{p,q}$.
\begin{remark}
If $V$ is a UMD space, see for example \cite{HNVW} for the definition of UMD spaces, 
then by Theorem 5.6.11 in \cite{HNVW} for $p>1$ and integers $n\geq 1$  we have
$
W^n_p(V)=H^n_p(V)
$
with equivalent norms. Clearly, $\cL_{p,q}$ is a UMD space for $p, q\in(1,\infty)$,  
which implies $W^n_{p,q}=H^n_{p,q}$ for non-negative integers $n$ and 
$p,q\in (1,\infty)$. 
\end{remark}

\mysection{Formulation of the results}

To formulate our assumptions we fix a constant $K$, a nonnegative number $m$, 
an exponent $p\in[2,\infty)$, and non-negative 
$\cZ$-measurable functions $\bar\eta$ and $\bar\xi$ on $Z$ such 
that they are bounded by $K$ and 
$$
K^2_{\eta}:=\int_{Z}|\bar{\eta}(z)|^2\,\mu(dz)<\infty
\quad 
K^2_{\xi}:=\int_{Z}|\bar{\xi}(z)|^2\,\nu(dz)<\infty. 
$$
We denote by $\lceil m \rceil$ the smallest integer which is greater than or equal to $m$, 
and $\lfloor m \rfloor$ the largest integer which is less than or equal to $m$. 

\begin{assumption}                                           \label{assumption L}
The derivatives in $x\in\bR^d$ of $a^{ij}$ up to order $\max\{\lceil m \rceil,2\}$, 
the derivatives of $b^i$ in $x$ up to order $\max\{\lceil m \rceil, 1\}$ 
and the derivatives of $c$ up to order $\lceil m \rceil$ are $\cP\otimes \cB(\bR^d)$-measurable 
functions on $\Omega\times H_T$, bounded by $K$ for all $i,j=1,2,...d$. 
The functions $\sigma^i=(\sigma^{ir})_{r=1}^\infty$ and $\beta=(\beta)_{r=1}^\infty$ 
and their derivatives up to order $\lceil m \rceil+1$ are $l_2$-valued 
$\cP\otimes \cB(\bR^d)$-measurable functions, bounded by $K$. 
Moreover $a^{ij}=a^{ji}$ for all $i,j=1,...d$, and for 
$P\otimes dt \otimes dx$-almost all $(\omega,t,x)\in\Omega\times H_T$
\begin{equation}                                          \label{parabolicity}
\alpha^{ij}_t(x)z^iz^j\geq 0\quad \text{for all $z=(z^1,...,z^d)$},
\end{equation}
 where
$$
\alpha^{ij}=2a^{ij}-\sigma^{ir}\sigma^{jr}.
$$
\end{assumption}

\begin{assumption}                                                   \label{assumption xi}
The mapping $\xi=(\xi^i)$ is an $\bR^d$-valued 
$\cP\otimes\cB(\bR^d)\otimes \cZ$-measurable function on 
$\Omega\times[0,T]\times\bR^d\times Z$.  
Its derivatives in $x\in \bR^d$ up to order $\max\{\lceil m \rceil,3\}$ exist 
and are continuous in $x\in \bR^d$ such that
$$
 |D^k\xi|\leq \bar{\xi}
\quad k=0,1,2,...,\max\{\lceil m \rceil, 3\}:=\bar m
$$
for all $(\omega,t,x,z)\in \Omega\times H_T\times Z$. Moreover, 
$$
K^{-1}\leq |\det(\bI+\theta D\xi_{t,z}(x))|
$$
for all $(\omega,t,x,z,\theta)\in \Omega\times H_T\times Z \times [0,1]$, 
where $\bI$ is the $d\times d$ identity matrix, 
and $D\xi$ denotes the Jacobian matrix of $\xi$ in $x\in\bR^d$.
\end{assumption}
\begin{assumption}                                                   \label{assumption eta}
The function $\eta=(\eta^i)$ maps $\Omega\times[0,T]\times\bR^d\times Z$ 
into $\bR^d$ such that Assumption \ref{assumption xi} holds with $\eta$ and 
$\bar\eta$ in place of $\xi$ and $\bar\xi$, respectively. 
\end{assumption}

\begin{assumption}                                              \label{assumption free}
The free data $f=(f_t)_{t\in[0,T]}$, $g=(g^r_t)_{t\in[0,T]}$  
and $h=(h_t)_{t\in[0,T]}$ 
are $\cP$-measurable processes 
with values in $H^m_p$, $H^{m+1}_p(l^2)$ and  
$H^{m+1}_{p,2}=H_p^{m+1}(\cL_{p,2})$, respectively,  
such that almost surely $\cK^p_{p,m}(T)<\infty$, where 
\begin{equation*}                            
\cK^p_{p,m}(t):=\int_0^t|f_s|^p_{H^m_p}+|g_s|^p_{H^{m+1}_p(l_2)}
+|h_s|^p_{H^{m+1}_{p,2}}+{\bf 1}_{p>2}|h_s|^p_{H^{m+2}_{p,2}} \,ds,  \quad t\leq T.
\end{equation*}
The initial value $\psi$ is an $\cF_0$-measurable 
random variable with values in $H^m_p$.
\end{assumption}

\begin{remark}                                                                    \label{remark ibp}
By Taylor's formula we have 
$$
v(x+\eta(x))-v(x)-\eta(x)\nabla v(x)
=\int_0^1\eta^k(x)(v_k(x+\theta\eta(x))-v_k(x))\,d\theta
$$
$$
=\int_0^1\eta^k(x)D_k(v(x+\theta\eta(x))-v(x))\,d\theta
-\int_0^1\theta\eta^k(x)\eta^l_k(x)v_l(x+\theta\eta(x))\,d\theta
$$
for every $v\in C_0^{\infty}$, where to ease notation 
we do not write the arguments $t$ and $z$ and write $v_k$ instead of $D_kv$ 
for functions $v$. 
Due to Assumption \ref{assumption eta} these equations extend to $v\in W^1_p$ 
for $p\geq2$ as well. 
Hence after changing the order of integrals, by integration by parts we 
obtain 
$$
(\cN^{\eta} v,\varphi)=-(\cJ_{\eta}^{k}v,D_k\varphi)+(\cJ_{\eta}^{0}v,\varphi)
$$
for $\varphi\in C_0^{\infty}$, 
with 
\begin{align}                                                       
\cJ^{k}_{\eta}(t)v(x)
=&\int_0^1\int_{Z}
\eta^k(v(\tau_{\theta\eta}(x))-v(x))\,\mu(dz)\,d\theta, \quad k=1,2,...,d,                \label{def Jk}\\
\cJ_{\eta}^{0}(t)v(x)
=&-\int_0^1\int_{Z}
\{\sum_k\eta^k_k(v(\tau_{\theta\eta}(x))-v(x))
+\theta\eta^k(x)\eta^l_k(x)v_l(\tau_{\theta\eta}(x))\}
\,\mu(dz)\,d\theta,                                                                                                     \label{def J0}
\end{align}
where for the sake of short notation   
the arguments $t,z$ of $\eta$ and $\eta_k$ have been omitted, and 
\begin{equation}                                                                                                       \label{tau0}
\tau_{\theta\eta}(x):=x+\theta\eta_{t,z}(x)\quad 
\text{for $x\in\bR^d$, $t\in[0,T]$, $z\in Z$ and 
$\theta\in[0,1]$}.  
\end{equation}
Operators $J^{k}_{\xi}$ and $J^{0}_{\xi}$ are defined 
as $J^{k}_{\eta}$ and  $J^{0}_{\eta}$
in  
\eqref{def Jk} and \eqref{def J0} but with 
$\xi$ everywhere in place of $\eta$.  
\end{remark}

\begin{definition}                           \label{definition solution}
An $L_p$-valued cadlag $\cF_t$-adapted process $u=(u_t)_{t\in[0,T]}$ 
is a {\it generalised solution} to equation \eqref{eq1} with initial value $u_0=\psi$, 
if $u_t\in W^1_p$  for $P\otimes dt$-almost every $(\omega, t)\in\Omega\times[0,T]$, 
such that $u\in L_p([0,T],W^1_p)$ almost surely, 
and for each $\varphi\in C^\infty_0(\bR^d)$
$$
(u_t,\varphi)=(\psi,\varphi)+\int_0^t\langle \cA_s u_s,\varphi \rangle
+(f_s,\varphi)\,ds
$$
\begin{equation}
 +\int_0^t(\cM_s^ru_s,\varphi)\,dw_s^r+ 
\int_0^t\int_Z
\int_{\bR^d}
(u_{s-}(x+\eta_{s,z}(x))-u_{s-}(x)+h_{s}(x,z))\varphi(x)\,dx \,\tilde{\pi}(dz,ds)            \label{equation}
\end{equation} 
for all $t\in[0,T]$ and almost all $\omega\in \Omega$, where
$$
\langle\cA_s u_s,\varphi\rangle
:= -(a_s^{ij}D_ju_s, D_i\varphi)+(\bar {b}_s^iD_iu_s+c_su_s,\varphi)
$$
$$
-(\cJ_{\xi}^{(i)}u_s,D_i\varphi)+(\cJ_{\xi}^{(0)}u_s,\varphi)
-(\cJ_{\eta}^{(i)}u_s,D_i\varphi)+(\cJ_{\eta}^{(0)}u_s,\varphi) 
$$
with $\bar b^i_s=b^i_s-D_ja^{ij}_s$ for all $s\in[0,T]$, and the stochastic 
integrals are It\^o integrals. 
\end{definition}

\begin{theorem}                                                                              \label{theorem main}
If Assumptions \ref{assumption L} through  
\ref{assumption free} hold with $m\geq 0$, then   
there is at most one generalised solution to \eqref{eq1}. 
If Assumptions \ref{assumption L} 
through \ref{assumption free} hold with $m\geq 1$,  
then there is a unique generalised solution 
$u=(u_t)_{t\in[0,T]}$, which is a weakly cadlag 
$W^m_p$-valued adapted process, and 
it is a strongly cadlag $W_p^s$-valued process for any $s\in[0,m)$. 
Moreover, 
\begin{equation}                                                \label{estimate main}
E\sup_{t\in[0,T]}|u_t|^q_{W^s_p}\,dt
\leq N\left(E|\psi|^q_{W^s_p}
+E\cK^q_{p,s}(T) \right)
\quad \text{for $s\in[0,m]$, $q\in(0,p]$}
\end{equation}
with a constant $N=N(d,m,p,q,T,K,K_{\xi},K_{\eta})$. 
\end{theorem}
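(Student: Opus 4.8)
The plan is to carry out the programme sketched in the introduction: uniqueness via the It\^o formula of \cite{GW2019}, a priori estimates first for $p=2^k$, construction of a solution by approximation, and then the passage to general $p$ and to the supremum norm. \textbf{Uniqueness} (needing only $m\ge 0$): given two generalised solutions $u,\bar u$ of \eqref{eq1} with the same data, I would set $v=u-\bar u$, an $L_p$-valued cadlag solution of \eqref{eq1} with zero initial and free data, and apply the It\^o formula for the $L_p$-norm from \cite{GW2019} to $|v_t|^p_{L_p}$. Expanding the drift contribution of $\cA_s v_s$ by integration by parts as in Definition \ref{definition solution} and combining its second-order part with the quadratic variation of the Wiener integrals driven by $w$, one produces the term $-\tfrac12 p(p-1)\int_{\bR^d}\alpha^{ij}_s D_i v_s\,D_j v_s\,|v_s|^{p-2}\,dx$, which is nonpositive by the stochastic parabolicity condition \eqref{parabolicity}; the remaining lower-order terms, the contributions of $\cN^{\xi}$ and $\cN^{\eta}$ handled through Remark \ref{remark ibp}, and the jump term in \eqref{eq1} are all bounded by $N|v_t|^p_{L_p}$ using $|\det(\bI+\theta D\xi_{t,z}(x))|\ge K^{-1}$, its $\eta$-analogue, and $K_\xi,K_\eta<\infty$. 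Taking expectations, dropping the martingale part, and applying Gronwall's lemma gives $E|v_t|^p_{L_p}=0$ for all $t\le T$, hence $u=\bar u$.

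For the a priori estimates, fix an integer $n\in[0,m]$ and an exponent $p=2^k$ with $k\ge1$, and consider a solution regular enough to run the following computation. Differentiating \eqref{eq1}, each $D_\alpha u$ with $|\alpha|\le n$ solves an equation of the same structure as \eqref{eq1} with additional lower-order source terms — the commutators of $D_\alpha$ with the coefficients of $\cA$, $\cM^r$, $\cN^{\xi}$ and $\cN^{\eta}$ — which by Assumptions \ref{assumption L} through \ref{assumption eta} are bounded in $W^n_p$ in terms of $|u_t|_{W^n_p}$ and the data. Applying It\^o's formula to $|D_\alpha u_t|^p_{L_p}$ and using that $p/2=2^{k-1}$ is again an even integer, so that $(|D_\alpha u|^2)^{p/2}$ is a polynomial in $D_\alpha u$ and every term arising stays integrable (the reason for first restricting to $p=2^k$; cf.\ \cite{GGK}), one again isolates the good second-order term $-\tfrac12 p(p-1)\int_{\bR^d}\alpha^{ij}D_iD_\alpha u\,D_jD_\alpha u\,|D_\alpha u|^{p-2}\,dx\le0$, which absorbs the second derivatives produced by $\cA$; the two non-local operators are controlled by Remark \ref{remark ibp}, Taylor's formula, and the Jacobian bounds of Assumptions \ref{assumption xi} and \ref{assumption eta}. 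Summing over $|\alpha|\le n$, taking expectations, and using Gronwall's lemma (together with the Burkholder--Davis--Gundy inequality when the supremum in time is wanted) yields $E\int_0^T|u_t|^p_{W^n_p}\,dt\le N\big(E|\psi|^p_{W^n_p}+E\cK^p_{p,n}(T)\big)$.

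To produce an actual solution, I would regularise \eqref{eq1}-\eqref{ini1} by mollifying the coefficients and data in $x$, truncating the data to compact support, and adding $\varepsilon\Delta$ to $\cA$, obtaining non-degenerate equations with smooth coefficients for which solutions $u^\varepsilon$ of the required regularity exist by the classical non-degenerate or $L_2$-theory (e.g.\ \cite{G1982}); the a priori bounds above hold for them uniformly in $\varepsilon$ and in the mollification parameters, so a subsequence converges weakly in the relevant spaces to a generalised solution of \eqref{eq1}-\eqref{ini1}, and the resulting data-to-solution map is bounded and linear for every $p=2^k$. For a general exponent $p\ge2$ one picks $k$ with $p\in[2^{k-1},2^k]$ and interpolates this solution operator, in the integrability exponent and in the smoothness order, to obtain the a priori estimates in the Bessel potential spaces $H^n_p$ for all $p\ge2$ and all real $n\in[0,m]$. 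The supremum estimate \eqref{estimate main} then follows from these integral estimates at all finite time-exponents, using that $\big(\int_0^T|u_t|^r_{W^s_p}\,dt\big)^{1/r}\to\esssup_{t\le T}|u_t|_{W^s_p}$ as $r\to\infty$ together with Fatou's lemma, and the temporal regularity asserted in the theorem (weakly cadlag as a $W^m_p$-valued process, strongly cadlag as a $W^s_p$-valued process for $s\in[0,m)$, and \eqref{estimate main} for all $q\in(0,p]$) follows from Theorem 2.2 of \cite{GW2019} on It\^o's formula, an extension of Lemma 5.3 of \cite{DGW}, and the interpolation inequality Theorem \ref{theorem i1}(v).

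I expect the main obstacle to be the a priori estimate itself in the case $p=2^k$ under only the degenerate stochastic parabolicity condition: arranging the exact cancellation of the It\^o correction from the Wiener integrals against the second derivatives coming from $\cA$, estimating every commutator and lower-order term sharply, and treating the two non-local operators — one of which, $\cN^{\xi}$, may itself degenerate — uniformly, all while keeping the computation legitimate (which is why the restriction $p=2^k$ is imposed at this stage and removed only afterwards by interpolation through the solution operator). This is exactly the place where the gap in \cite{KR} occurred.
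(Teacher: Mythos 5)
Your proposal follows essentially the same programme as the paper: uniqueness via the $L_p$-It\^o formula from \cite{GW2019} plus Gronwall, a priori $W^n_p$-estimates for $p=2^k$ via It\^o's formula for $|D^nu_t|^p_{L_p}$ with the parabolicity condition absorbing the second-order term, construction by mollifying and truncating the data and coefficients together with a vanishing-viscosity term, interpolation of the solution operator (using the auxiliary spaces and Theorems \ref{theorem i1}, \ref{theorem Lpq interpolation}, \ref{theorem H}) to reach general $p\ge2$ and real $s\in[0,m]$, letting $r\to\infty$ in $\bU^s_{r,p}$ for the $q=p$ supremum estimate, and Lemma \ref{lemma w} plus Theorem \ref{theorem i1}(v) for the weak/strong cadlag regularity. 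The only small inaccuracy is your attribution at the end: the extension of \eqref{estimate main} from $q=p$ to $q\in(0,p)$ is obtained from the supremum Lemma \ref{lemma sup} (the adaptation of the Gy\"ongy--Krylov lemma from \cite{GK2003}), not from the It\^o formula/interpolation tools you cite, which are used only for the cadlag regularity statement.
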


\mysection{Preliminaries}

For vectors $v=(v^1,....,v^d)\in\bR^d$ we define the operators $T^v$, $I^v$ and $J^v$ by 
$$
T^v\varphi(x)=\varphi(x+v)-\varphi(x), 
\quad I^v\varphi(x)=\varphi(x+v)-\varphi(x), 
$$
\begin{equation}                                                                            \label{def TJI}
\quad J^v\phi(x)=\varphi(x+v)-\phi(x)-v^iD_i\phi(x)\quad x\in\bR^d
\end{equation}
acting on functions $\varphi$ and $\phi$ defined on $\bR^d$ such that the generalised 
derivative $D_i\phi$ exist. 
If $v=v(x)$ is a function of $x\in\bR^d$ then the notation 
$T^v$, $I^v$ and $J^v$ mean the operators defined by \eqref{def TJI} with 
$v(x)$ in place $v$. For example, $J^{\xi}$ and $J^{\eta}$ mean for each $\omega\in\Omega$, 
$t\in[0,T]$ and $z\in Z$ the operators defined 
on differentiable functions $\varphi$ on $\bR^d$ by 
\begin{align*}                                                                             
J^{\xi}\varphi(x)&=\varphi(x+\xi(x))-\varphi(x)-\eta^i(x)D_iv(x),        \nonumber\\
J^{\eta}\varphi(x)&=\varphi(x+\eta(x))-\varphi(x)-\eta^i(x)D_iv(x), \quad x\in\bR^d 
\end{align*}
for each fixed variable $(\omega,t,z)$ suppressed in this notation.
We will often use the Taylor formulas  
\begin{equation}                                                                     \label{taylor1}
I^{v}\varphi(x)=\int_0^1\varphi_i(x+\theta v)v^i\,d\theta
\end{equation}
and
\begin{equation}                                                                     \label{taylor2}  
J^v\phi(x)=\int_0^1(1-\theta)\phi_{ij}(x+\theta v)v^iv^j  
\,d\theta
\end{equation}
with  $\varphi_i:=D_i\varphi$ and $\phi_{ij}:=D_{i}D_{ j}\phi$, 
which hold for every $x\in\bR^d$ when $\varphi\in C^1(\bR^d)$ and $\phi\in C^2(\bR^d)$, 
and they hold for $dx$-almost every $x\in\bR^d$  
when $\varphi\in W^1_p$ and $\phi\in W^2_p$.

We fix a non-negative smooth function $k=k(x)$ 
with compact support on $\bR^d$ 
such that $k(x)=0$ 
for $|x|\geq 1$, $k(-x)=k(x)$ for $x\in \bR^d$, 
and $\int_{\bR^d}k(x)\,dx=1$. 
For $\varepsilon>0$ and locally integrable 
functions $v$ of  $x\in\bR^d$ we use the notation 
$v^{(\varepsilon)}$ for the mollification of $v$, defined by 
\begin{equation}                                                             \label{mollification}
v^{(\varepsilon)}(x):=S^\varepsilon v(x):=\varepsilon^{-d}
\int_{\bR^d}v(y)k((x-y)/\varepsilon)\,dy,\quad x\in \bR^d. 
\end{equation}
Note that if $v=v(x)$ is a locally Bochner-integrable function on $\bR^d$ 
taking values in a Banach space, the mollification of $v$ is defined 
as \eqref{mollification} in the sense of Bochner integral.

The following lemmas are taken from \cite{DGW} 
and for their proof we refer to \cite{DGW}. 
\begin{lemma}                                      \label{lemma TIJ}
Let Assumption \ref{assumption eta} hold.   
 Then for every $(\omega,t,z)\in\Omega\times[0,T]\times Z$ 
the operators $T^{\eta}$, $I^{\eta}$ 
and $J^{\eta}$ are bounded linear operators from $W^k_p$ to $W^k_p$, 
from $W^{k+1}_p$ to $W_p$ 
and from $W^{k+2}_p$ to $W^k_p$ respectively, for $k=0,1,...,m$, 
such that $T^{\eta}\varphi$, 
$I^{\eta}f$ 
and $J^{\eta}g$ are $\cP\otimes \cZ$-measurable 
$W^k_p$-valued functions of $(\omega, t,z)$,  and
$$
|T^{\eta}\varphi|_{W^k_p}\leq N|\varphi|_{W^k_p},
\quad
 |I^{\eta}f|_{W^k_p}\leq N\bar{\eta}(z)|f|_{W^{k+1}_p},
\quad 
|J^{\eta}g|_{W^k_p}\leq N\bar{\eta}^2(z)|g|_{W^{k+2}_p}
$$
for all $\varphi\in W^k_p$, $f\in W^{k+1}_p$ 
and $g\in W^{k+2}_p$, 
where $N$ is a constant 
only depending on $K,m,d,p$.
\end{lemma}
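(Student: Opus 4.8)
The plan is to derive all of Lemma \ref{lemma TIJ} from a single change-of-variables estimate, used together with the Taylor identities \eqref{taylor1}--\eqref{taylor2} and the Leibniz rule. Fix $(\omega,t,z)$ and, for $\theta\in[0,1]$, write $\tau_{\theta\eta}(x)=x+\theta\eta_{t,z}(x)$ as in \eqref{tau0}, so that $\psi:=\tau_{\eta}$ is the shift appearing in $T^\eta$, $I^\eta$ and $J^\eta$. The first and main step is to show that each $\tau_{\theta\eta}$ is a $C^1$-diffeomorphism of $\bR^d$ onto $\bR^d$: by Assumption \ref{assumption eta} the Jacobian determinant satisfies $|\det(\bI+\theta D\eta_{t,z})|\ge K^{-1}$, so $\tau_{\theta\eta}$ is everywhere locally invertible, while $|\eta|\le\bar\eta\le K$ makes $\tau_{\theta\eta}=\mathrm{id}+\theta\eta$ proper, and a proper local homeomorphism of $\bR^d$ is a covering map, hence a homeomorphism since $\bR^d$ is simply connected. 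The substitution $y=\tau_{\theta\eta}(x)$ then gives, for every $\varphi\in L_p$,
\[
\int_{\bR^d}|\varphi(\tau_{\theta\eta}(x))|^p\,dx=\int_{\bR^d}|\varphi(y)|^p\,|\det(\bI+\theta D\eta_{t,z}(\tau_{\theta\eta}^{-1}(y)))|^{-1}\,dy\le K\,|\varphi|_{L_p}^p ,
\]
that is, $\varphi\mapsto\varphi\circ\tau_{\theta\eta}$ is bounded on $L_p$ with norm at most $K^{1/p}$, uniformly in $(\omega,t,z,\theta)$.

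Granting this, consider $T^\eta$. By density it suffices to prove $|T^\eta\varphi|_{W^k_p}\le N|\varphi|_{W^k_p}$ for $\varphi\in C_0^\infty$ (the estimate then extends to $W^k_p$ by approximation), and for such $\varphi$ the function $\varphi\circ\psi$ again has compact support. By the chain rule in the form of Fa\`a di Bruno's formula, $D_\alpha(\varphi\circ\psi)$ with $|\alpha|\le k$ is a finite sum of terms $[(D_\beta\varphi)\circ\psi]\,P_{\alpha\beta}$ with $|\beta|\le|\alpha|$, where $P_{\alpha\beta}$ is a universal polynomial in the entries of $\bI+D\eta_{t,z}$ and of the $D^s\eta_{t,z}$ for $2\le s\le|\alpha|$; since $|\alpha|\le k\le\lceil m\rceil\le\bar m$, Assumption \ref{assumption eta} bounds each $|P_{\alpha\beta}|$ by a constant $N(K,d,m)$. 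Applying the $L_p$-bound of the first step to each $(D_\beta\varphi)\circ\psi$ and summing over $|\alpha|\le k$ gives $|\varphi\circ\psi|_{W^k_p}\le N|\varphi|_{W^k_p}$, hence the bound for $T^\eta$ with $N=N(K,m,d,p)$, the dependence on $p$ entering through the change-of-variables constant.

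For $I^\eta$ and $J^\eta$ one keeps track of the explicit $\eta$-factors supplied by the Taylor formulas. By \eqref{taylor1}, $I^\eta f(x)=\int_0^1\eta^i_{t,z}(x)\,(D_if)(\tau_{\theta\eta}(x))\,d\theta$; differentiating under the integral with $D_\alpha$, $|\alpha|\le k$, and using the Leibniz rule, each resulting term is a product of some $D_\beta\eta^i_{t,z}$ with $|\beta|\le|\alpha|$, a polynomial in the derivatives of $\tau_{\theta\eta}$ of order $\le|\alpha|$ (bounded by $N(K,d,m)$ as above), and $(D_\gamma f)(\tau_{\theta\eta}(x))$ with $|\gamma|\le k+1$. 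Since $|D_\beta\eta^i_{t,z}|\le\bar\eta(z)$ for every multi-number $\beta$, including $\beta=\epsilon$, every term carries exactly one factor bounded by $\bar\eta(z)$; combining the first step with Minkowski's inequality in $\theta$ then yields $|D_\alpha I^\eta f|_{L_p}\le N\bar\eta(z)|f|_{W^{k+1}_p}$, and summation over $|\alpha|\le k$ gives the stated bound. The argument for $J^\eta$ is identical, starting from \eqref{taylor2} in the form $J^\eta g(x)=\int_0^1(1-\theta)\,\eta^i_{t,z}(x)\eta^j_{t,z}(x)\,(D_iD_jg)(\tau_{\theta\eta}(x))\,d\theta$: now every Leibniz term contains two factors of the form $D_\beta\eta_{t,z}$, each bounded by $\bar\eta(z)$, together with $(D_\gamma g)(\tau_{\theta\eta}(x))$, $|\gamma|\le k+2$, so $|J^\eta g|_{W^k_p}\le N\bar\eta^2(z)|g|_{W^{k+2}_p}$.

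It remains to record the measurability assertions and to identify the hard part. For $\varphi,f,g\in C_0^\infty$ all the pointwise formulas above exhibit $T^\eta\varphi$, $I^\eta f$, $J^\eta g$ and their $x$-derivatives up to order $k$ as jointly $\cP\otimes\cZ\otimes\cB(\bR^d)$-measurable functions of $(\omega,t,z,x)$ built from $\eta_{t,z}(x)$, which is measurable by Assumption \ref{assumption eta}; since $W^k_p$ is separable, Pettis's theorem upgrades this to strong $\cP\otimes\cZ$-measurability into $W^k_p$, and the case of general arguments follows by approximating in $W^k_p$ and passing to the limit using the uniform bounds just proved. The only genuinely delicate point in the whole argument is the first step — promoting the local invertibility furnished by the Jacobian lower bound in Assumption \ref{assumption eta} to a global diffeomorphism of $\bR^d$, which is what legitimizes the change-of-variables formula uniformly in $(\omega,t,z,\theta)$; once that is settled, the remainder is the bookkeeping with Fa\`a di Bruno's formula and Minkowski's inequality carried out in \cite{DGW}.
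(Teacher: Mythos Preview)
The paper does not supply its own proof of this lemma; it simply cites \cite{DGW}. Your sketch is correct and matches the approach one finds there: the global diffeomorphism of $\tau_{\theta\eta}$ via properness (the paper invokes exactly this argument, phrased through Theorem~1 in \cite{G}, a few paragraphs later when discussing $\rho_{\varepsilon,\vartheta}$), then change of variables for the $L_p$ bound on composition, chain rule/Fa\`a di Bruno for the higher derivatives, and the Taylor identities \eqref{taylor1}--\eqref{taylor2} to extract the factors $\bar\eta$ and $\bar\eta^2$ for $I^\eta$ and $J^\eta$. Nothing is missing.
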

\begin{lemma}                                                                      \label{lemma diff}
Let $\rho$ be 
a $C^k(\bR^d)$-diffeomorphism for some $k\geq1$, such that 
\begin{equation}                                                                    \label{diffeo korlat}                                                      
M\leq |\det D\rho | 
\,\,
\text{and\,\,  $|D^l\rho|\leq N$ \quad for $l=1,2,...,k$}                                                       
\end{equation}
for some constants $M>0$ and $N>0$. Then
there are positive constants $M'=M'(N,d)$ and $N'=N'(N,M,d,k)$ 
such that \eqref{diffeo korlat} holds 
with $g:=\rho^{-1}$, the inverse of $\rho$, in place of $\rho$, 
with $M'$ and $N'$ in place of $M$ 
and $N$, respectively.
\end{lemma}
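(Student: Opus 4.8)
The plan is to prove the bound on $|\det Dg|$ from below by a pointwise inverse-function-theorem identity, and then bound the derivatives $|D^lg|$ inductively by differentiating the relation $D\rho(g(x))\,Dg(x)=\bI$.

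First I would handle the Jacobian determinant. Since $\rho$ is a $C^k$-diffeomorphism, $g=\rho^{-1}$ is $C^k$ and the chain rule gives $D\rho(g(x))\,Dg(x)=\bI$ for all $x$, hence $\det D\rho(g(x))\cdot\det Dg(x)=1$. From $|D\rho|\le N$ we get that every entry of the $d\times d$ matrix $D\rho$ is bounded by $N$, so by the Leibniz expansion of the determinant $|\det D\rho(y)|\le d!\,N^d$ for all $y$. Therefore
$$
|\det Dg(x)|=\frac{1}{|\det D\rho(g(x))|}\geq \frac{1}{d!\,N^d}=:M',
$$
which depends only on $N$ and $d$, as required. (The hypothesis $M\le|\det D\rho|$ is not even needed for this half.)

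Next I would bound $|Dg|$. From $Dg(x)=\big(D\rho(g(x))\big)^{-1}$ and the cofactor formula for the inverse, $Dg(x)=\frac{1}{\det D\rho(g(x))}\,\mathrm{adj}\,D\rho(g(x))$. Each entry of the adjugate is a $(d-1)\times(d-1)$ minor of $D\rho$, hence bounded in absolute value by $(d-1)!\,N^{d-1}$, while $|\det D\rho(g(x))|\ge M$ by hypothesis \eqref{diffeo korlat}. Thus $|Dg(x)|\le C(d)\,N^{d-1}/M$, a bound of the claimed form $N'(N,M,d)$. Then I would proceed by induction on $l$ from $2$ up to $k$: differentiating the identity $D\rho(g(x))\,Dg(x)=\bI$ a total of $l-1$ times and applying the Faà di Bruno / multivariate chain rule expresses $D^lg$ as a finite sum of terms, each a product of the already-controlled quantity $Dg=(D\rho\circ g)^{-1}$ with factors of the form $(D^{j}\rho)\circ g$ for $2\le j\le l$ and lower-order derivatives $D^{i}g$ with $i<l$. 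Since $|D^j\rho|\le N$ for all $j\le k$ and, by the inductive hypothesis, $|D^ig|\le N'_i(N,M,d,k)$ for $i<l$, each such term is bounded by a constant depending only on $N,M,d,k$, and summing the finitely many terms gives $|D^lg|\le N'_l(N,M,d,k)$. Taking $N'$ to be the maximum of $M$ and the finitely many constants $N'_1,\dots,N'_k$ (so that the lower bound $M\le|\det Dg|$ part of \eqref{diffeo korlat} holds trivially, since in fact $|\det Dg|\ge M'\ge$ nothing forces $M\le\cdot$; one simply records that \eqref{diffeo korlat} is asserted with $M'$ replacing $M$ in the determinant bound and $N'$ replacing $N$ in the derivative bounds) completes the proof.

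The only mildly delicate point is bookkeeping in the induction: one must check that differentiating $D\rho(g(x))Dg(x)=\bI$ never produces a factor $D^jg$ with $j=l$ on the right-hand side except the single term $\big(D^l g\big)(D\rho\circ g)$, which is why the recursion closes and $D^lg$ can be solved for in terms of strictly lower-order data and the fixed factor $(D\rho\circ g)^{-1}$. This is the step I expect to require the most care, but it is entirely routine multivariable calculus; everything else is estimating minors and applying the cofactor formula. Since all of this is standard, for the details I would simply refer to the chain rule for higher derivatives of inverse maps.
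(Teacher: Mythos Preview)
Your argument is correct and is the standard one: the identity $D\rho(g(x))\,Dg(x)=\bI$ gives the determinant bound $M'=1/(d!\,N^d)$ from the Leibniz expansion, the cofactor formula gives the first-order bound $|Dg|\le C(d)N^{d-1}/M$, and differentiating the identity and inducting on $l$ handles the higher derivatives. The paper does not actually supply its own proof of this lemma; it simply cites \cite{DGW} for the argument, so there is nothing to compare against beyond noting that what you wrote is exactly the expected proof. (Your parenthetical about taking $N'$ to be ``the maximum of $M$ and \dots'' is garbled and should be deleted: the statement asserts \eqref{diffeo korlat} for $g$ with $M'$ in the determinant slot and $N'$ in the derivative slots, so you just set $N':=\max_{1\le l\le k}N'_l$ and you are done.)
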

The following lemma is a slight generalisation of Lemma 3.4 in \cite{DGW}. 
\begin{lemma}                                                                      \label{lemma epsilon}
Let $\rho$ be 
a $C^k(\bR^d)$-diffeomorphism for $k\geq2$, such that 
\eqref{diffeo korlat} holds                                                     
for some positive constants $M$ and $N$. 
Then there is a positive constant $\varepsilon_0=\varepsilon_0(M,N,d)$ 
such that $\rho_{\varepsilon,\vartheta}:=\vartheta\rho+(1-\vartheta)\rho^{(\varepsilon)}$ 
is a $C^{k}(\bR^d)$-diffeomorphism 
for every $\varepsilon\in(0,\varepsilon_0)$ and $\vartheta\in[0,1]$,   
and \eqref{diffeo korlat} remains valid for $\rho_{\varepsilon,\vartheta}$ 
in place of $\rho$, with 
$M{''}=M/2$ in place of $M$. Moreover, $\rho^{(\varepsilon)}$ is a $C^{\infty}$-diffeomorphism 
for $\varepsilon\in(0,\varepsilon_0)$. 
\end{lemma}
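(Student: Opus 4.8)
The plan is to verify, in order, the derivative bounds for $\rho_{\varepsilon,\vartheta}$, the lower bound on its Jacobian determinant, and finally its global bijectivity, using only elementary mollification estimates, the inverse function theorem, and a contraction argument. Write $\rho^{(\varepsilon)}=S^\varepsilon\rho$ as in \eqref{mollification}, and recall that $S^\varepsilon$ is an averaging operator: $k_\varepsilon(\cdot):=\varepsilon^{-d}k(\cdot/\varepsilon)$ is a probability density supported in $\{|y|\le\varepsilon\}$, so $D^l\rho^{(\varepsilon)}=(D^l\rho)^{(\varepsilon)}$ and $|D^l\rho^{(\varepsilon)}|\le|D^l\rho|_\infty\le N$ for $l=1,\dots,k$; in particular $\rho^{(\varepsilon)}\in C^\infty$, whence $\rho_{\varepsilon,\vartheta}\in C^k$. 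Since $k\ge2$, $D\rho$ is $N$-Lipschitz, and hence for all $x\in\bR^d$
\[
|\rho^{(\varepsilon)}(x)-\rho(x)|\le\sup_{|y|\le\varepsilon}|\rho(x-y)-\rho(x)|\le N\varepsilon,\qquad
|D\rho^{(\varepsilon)}(x)-D\rho(x)|\le N\varepsilon .
\]
By the triangle inequality it follows that $|D^l\rho_{\varepsilon,\vartheta}|\le N$ for $l=1,\dots,k$, while $|\rho_{\varepsilon,\vartheta}-\rho|_\infty\le N\varepsilon$ and $|D\rho_{\varepsilon,\vartheta}-D\rho|_\infty\le N\varepsilon$, uniformly in $\vartheta\in[0,1]$ and $\varepsilon>0$.

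For the determinant I would use the reverse triangle inequality together with the fact that $A\mapsto\det A$ is Lipschitz on $\{|A|\le N\}$ with a constant $C_0=C_0(d,N)$ (there its gradient is the cofactor matrix, bounded in terms of $d,N$): for all $x\in\bR^d$ and $\vartheta\in[0,1]$,
\[
|\det D\rho_{\varepsilon,\vartheta}(x)|\ge|\det D\rho(x)|-C_0(d,N)\,|D\rho_{\varepsilon,\vartheta}(x)-D\rho(x)|\ge M-C_0(d,N)N\varepsilon .
\]
Fixing $\varepsilon_0=\varepsilon_0(M,N,d)>0$ so small that $C_0(d,N)N\varepsilon_0\le M/2$, we obtain $|\det D\rho_{\varepsilon,\vartheta}|\ge M/2$ on $\bR^d$ for all $\varepsilon\in(0,\varepsilon_0)$; together with $|D^l\rho_{\varepsilon,\vartheta}|\le N$ this is precisely \eqref{diffeo korlat} for $\rho_{\varepsilon,\vartheta}$, with $M''=M/2$ in place of $M$. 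In particular $D\rho_{\varepsilon,\vartheta}(x)$ is invertible at every $x$, so $\rho_{\varepsilon,\vartheta}$ is a $C^k$ local diffeomorphism by the inverse function theorem.

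The main point is to promote this to a global bijection of $\bR^d$ onto $\bR^d$. For a matrix $A$ with $|A|\le N$ and $|\det A|\ge M$ one has $|A^{-1}|\le\tilde N$ with $\tilde N=\tilde N(d,N,M)$, hence $|D\rho^{-1}|\le\tilde N$ on $\bR^d$ (this also follows from Lemma \ref{lemma diff}). Given $y\in\bR^d$, the substitution $x=\rho^{-1}(w)$ reduces the equation $\rho_{\varepsilon,\vartheta}(x)=y$ to $w+(1-\vartheta)\Phi(w)=y$, where $\Phi:=(\rho^{(\varepsilon)}-\rho)\circ\rho^{-1}$ satisfies $|D\Phi|_\infty\le|D(\rho^{(\varepsilon)}-\rho)|_\infty\,|D\rho^{-1}|_\infty\le N\tilde N\varepsilon$. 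Shrinking $\varepsilon_0$ further, still depending only on $M,N,d$, so that $N\tilde N\varepsilon_0\le1/2$, the map $w\mapsto y-(1-\vartheta)\Phi(w)$ is a contraction (with constant $\le1/2$) on the complete space $\bR^d$ for every $y$ and $\vartheta\in[0,1]$, hence has a unique fixed point; therefore $\rho_{\varepsilon,\vartheta}(x)=y$ has a unique solution $x=\rho^{-1}(w)$. Thus $\rho_{\varepsilon,\vartheta}:\bR^d\to\bR^d$ is bijective, and being $C^k$ with everywhere invertible differential, its inverse is of class $C^k$; hence $\rho_{\varepsilon,\vartheta}$ is a $C^k(\bR^d)$-diffeomorphism.

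Finally, taking $\vartheta=0$ shows $\rho^{(\varepsilon)}=\rho_{\varepsilon,0}$ is a bijection of $\bR^d$ with everywhere invertible differential; since $\rho^{(\varepsilon)}\in C^\infty$, the inverse function theorem (bootstrapped) gives $(\rho^{(\varepsilon)})^{-1}\in C^\infty$, so $\rho^{(\varepsilon)}$ is a $C^\infty$-diffeomorphism for $\varepsilon\in(0,\varepsilon_0)$. I expect the globalisation in the third paragraph to be the only delicate step: the point is that precomposing the perturbation $\rho^{(\varepsilon)}-\rho$ with $\rho^{-1}$ turns it into a map that is small in the $C^1$-norm (using $k\ge2$ and the bound on $D\rho^{-1}$), whereas $\rho^{(\varepsilon)}-\rho$ itself need not be small in $C^1$; the bounds on $D^l\rho_{\varepsilon,\vartheta}$ and $\det D\rho_{\varepsilon,\vartheta}$ are routine. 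As an alternative to the fixed-point argument one could invoke the Hadamard--Caccioppoli global inverse function theorem: $\rho_{\varepsilon,\vartheta}$ is a local diffeomorphism, and since $|\rho_{\varepsilon,\vartheta}-\rho|_\infty\le N\varepsilon$ and $\rho$ is proper, $\rho_{\varepsilon,\vartheta}$ is proper, hence a covering map of the simply connected space $\bR^d$, i.e.\ a diffeomorphism.
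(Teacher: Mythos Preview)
Your proof is correct and follows the same overall structure as the paper's (mollification bounds on $D^l\rho_{\varepsilon,\vartheta}$, Lipschitz continuity of $\det$ on bounded matrices for the Jacobian lower bound, local invertibility via the inverse function theorem). The only genuine difference is in the globalisation step: the paper derives the two-sided estimate $|x-y|\le 2N'|\rho_{\varepsilon,\vartheta}(x)-\rho_{\varepsilon,\vartheta}(y)|$ directly (using the $C^2$ bound on $\rho$ to control the second difference $\rho(x)-\rho_{\varepsilon,\vartheta}(x)-\rho(y)+\rho_{\varepsilon,\vartheta}(y)$), deduces properness, and invokes Gordon's theorem that a proper local $C^1$-diffeomorphism of $\bR^d$ is global; you instead rewrite $\rho_{\varepsilon,\vartheta}(x)=y$ as a fixed-point equation $w=y-(1-\vartheta)\Phi(w)$ via $x=\rho^{-1}(w)$ and apply Banach's contraction principle. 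Your route is entirely self-contained and avoids the external citation; the paper's is a touch more geometric. Your final alternative via Hadamard--Caccioppoli is essentially what the paper does.

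One small muddle in your closing commentary: you already established $|D(\rho^{(\varepsilon)}-\rho)|_\infty\le N\varepsilon$, so $\rho^{(\varepsilon)}-\rho$ \emph{is} small in $C^1$ (this is exactly where $k\ge2$ is used). Precomposing with $\rho^{-1}$ does not make the $C^1$-norm smaller---if anything it multiplies by $|D\rho^{-1}|_\infty$---so the remark that ``$\rho^{(\varepsilon)}-\rho$ itself need not be small in $C^1$'' is misplaced. This does not affect the validity of your contraction argument, which is sound.
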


\begin{proof}
We show first that $|\det D\rho_{\varepsilon,\vartheta}|$ is separated 
away from zero for sufficiently small 
$\varepsilon>0$. To this end observe that for bounded Lipschitz functions 
$v=(v^1,v^2,...,v^d)$ on $\bR^d$ and 
$v_{\varepsilon,\vartheta}:=\vartheta v+(1-\vartheta)v^{\varepsilon}$ 
we have  
$$
|\Pi_{i=1}^dv^i-\Pi_{i=1}^dv^{i}_{\varepsilon,\vartheta}|
\leq \sum_{i=1}^dK^{d-1}|v^i-v^{i}_{\varepsilon,\vartheta}|
\leq K^{d-1}L\varepsilon
\quad
\text{for any $\varepsilon>0$ and $\vartheta\in[0,1]$},  
$$
where 
$L$ is the Lipschitz constant of $v$ and $K$ is a bound for $|v|$. 
Using this observation and taking into account that $D_i\rho^l$ is bounded by $N$ 
and it is Lipschitz 
continuous with a Lipschitz constant not larger than $N$, we get 
$$
|\det D\rho-\det D\rho_{\varepsilon,\vartheta}|\leq d!\,N^{d}\varepsilon. 
$$
Thus setting $\varepsilon'=M/(2d!\,N^{d})$,  for $\varepsilon\in(0,\varepsilon')$ 
 and $\vartheta\in[0,1]$ we have 
$$
|\det D\rho_{\varepsilon,\vartheta}|
\geq 
|\det D\rho|-|\det D\rho-\det D\rho_{\varepsilon,\vartheta}|
$$
$$
\geq |\det D\rho|/2\geq M/2.
$$
Clearly, $\rho_{\varepsilon,\vartheta}$ is a $C^{k}$ function. 
Hence by the implicit function theorem $\rho_{\varepsilon,\vartheta}$ 
is a local $C^{k}$-diffeomorphism for $\varepsilon\in(0,\varepsilon')$ and $\vartheta\in[0,1]$.  
We prove now that $\rho_{\varepsilon,\vartheta}$ is a global $C^{k}$-diffeomorphism 
for sufficiently small $\varepsilon$. Since by the previous lemma $|D\rho^{-1}|\leq N'$,  
we have 
\begin{align*}
|x-y|\leq &N'|\rho(x)-\rho(y)|                             \\
\leq &N'|\rho_{\varepsilon,\vartheta}(x)-\rho_{\varepsilon,\vartheta}(y)|
+N'|\rho(x)-\rho_{\varepsilon,\vartheta}(x)+\rho_{\varepsilon,\vartheta}(y)-\rho(y)| 
\end{align*}
for all $x,y\in\bR^d$ and $\varepsilon>0$ and $\vartheta\in[0,1]$. Observe that
$$
|\rho(x)-\rho_{\varepsilon,\vartheta}(x)+\rho_{\varepsilon,\vartheta}(y)-\rho(y)|
\leq\int_{\R^d}|\rho(x)-\rho(x-\varepsilon u)+\rho(y-\varepsilon u)-\rho(y)|k(u)\,du
$$
$$
\leq 
\int_{\R^d}\int_0^1
\varepsilon|u||\nabla\rho(x-\theta\varepsilon u)-\nabla\rho(y-\theta\varepsilon u)|k(u)
\,d\theta\,du
$$
$$
\leq\varepsilon N|x-y|\int_{|u|\leq1}|u|k(u)\,du
\leq\varepsilon N|x-y|.
$$
Thus 
$|x-y|
\leq N'|\rho_{\varepsilon,\vartheta}(x)-\rho_{\varepsilon,\vartheta}(y)|
+\varepsilon N'N|x-y|$.  
Therefore  setting $\varepsilon''=1/(2NN')$, for all $\varepsilon\in(0,\varepsilon'')$  
and $\vartheta\in[0,1]$ we have 
\begin{equation*}
|x-y|\leq 2N'|\rho_{\varepsilon,\vartheta}(x)-\rho_{\varepsilon,\vartheta}(y)|
\quad\text{for all $x,y\in\bR^d$}, 
\end{equation*}
which implies $\lim_{|x|\to\infty}|\rho_{\varepsilon,\vartheta}(x)|=\infty$, i.e., 
under 
$\rho_{\varepsilon,\vartheta}$ the pre-image of any compact set is a compact set 
for each $\varepsilon\in(0,\varepsilon'')$ and $\vartheta\in[0,1]$. 
A continuous function with this 
property is called a {\it proper function}, and by Theorem 1 in \cite{G} a local $C^1$- 
diffeomorphism from $\bR^d$ into $\bR^d$ is a global diffeomorphism 
if and only if it is a proper function. 
Thus we have 
proved that $\rho_{\varepsilon,\vartheta}$ is 
a global $C^{k}$-diffeomorphism for 
$\varepsilon\in(0,\varepsilon_0)$ and $\vartheta\in[0,1]$,  
where $\varepsilon_0=\min(\varepsilon',\varepsilon'')$. 
Clearly, $\rho_{\varepsilon,0}=\rho^{(\varepsilon)}$ is a $C^{\infty}$ function and hence 
it is a $C^{\infty}$-diffeomorphism for every $\varepsilon\in(0,\varepsilon_0)$. 

Now we can complete the proof of the lemma by noting that since  
$D_{j}\rho^{(\varepsilon)}=(D_{j}\rho)^{(\varepsilon)}$,   
the condition $|D^i\rho|\leq N$ implies  
    $|D^i\rho_{\varepsilon,\vartheta}|\leq N$ for any $\varepsilon>0$ 
    and $\vartheta\in[0,1]$.  
\end{proof}

For fixed $\varepsilon>0$ and $\vartheta\in[0,1]$ let 
$\rho_{\varepsilon,\vartheta}$ denote 
any of the functions 
$$
\rho_{\varepsilon,\vartheta}(x):
=x+\vartheta\eta_{t,z}(x)
+(1-\vartheta)\eta_{t,z}^{(\varepsilon)}(x), 
\quad
\rho_{\varepsilon,\vartheta}(x):
=x+\vartheta\xi_{t,z}(x)
+(1-\vartheta)\xi_{t,z}^{(\varepsilon)}(x)
\quad x\in\bR^d. 
$$
for each $(\omega,t,z)\in\Omega\times[0,T]\times Z$, 
and assume that Assumptions \ref{assumption xi} and 
\ref{assumption eta} hold. 
Then by the inverse function theorem 
$\rho$ is a local $C^1(\bR^d)$-diffeomor\-phism for each $t$, $\theta$ and $z$.  
Since
$$
|\eta_{t,z}(x)|\leq \bar\eta(z)<\infty,  \quad 
|\xi_{t,z}(x)|\leq \bar\xi(z)<\infty, 
$$
we have 
$
\lim_{|x|\to\infty}|\rho_{\varepsilon,\vartheta}(x)|=\infty. 
$ 
Hence $\rho_{\varepsilon,\vartheta}$ is a global $C^1$-diffeomorphism on $\bR^d$, 
for $\varepsilon>0$, $\vartheta\in[0,1]$, for 
each $t\in[0,T]$, $z\in Z$ and $\theta\in[0,1]$,  
by Theorem 1 in \cite{G}. 
Note that by the formula on the derivative of inverse functions  
a $C^{1}(\bR^d)$-diffeomorphism and its inverse have continuous derivatives 
up to the same order. Thus Lemmas \ref{lemma diff} and \ref{lemma epsilon} 
imply the following lemma, which is a slight generalisation of Corollary 3.6 
in \cite{DGW}. 

\begin{lemma}                                                                  \label{lemma diff2}
Let Assumptions \ref{assumption eta} and \ref{assumption xi} hold. 
Then there is a positive constant
 $\varepsilon_0=\varepsilon_0(K,d)$ 
such that $\rho=\rho_{\varepsilon,\vartheta}$ is a $C^k$-diffeomorphism on 
$\bR^d$ for $k=\bar m$, for any 
$\varepsilon\in(0,\varepsilon_0)$, $\vartheta \in[0,1]$ 
and $(\omega,t,z)\in\Omega\times[0,T]\times Z$. 
Moreover, for some constants 
$M=M(K,d,\bar m)$ and $N=(K,d,\bar m)$
\begin{equation}                                                           \label{smootheta}
M\leq  
\min(|{\rm{det}}D\rho|,|{\rm{det}}(D\rho)^{-1}|), \quad 
\max(|D^k\rho|, |D^k \rho^{-1}|)\leq N
\end{equation}
for any 
$\varepsilon\in(0,\varepsilon_0)$, $\vartheta \in[0,1]$, 
$(\omega,t,z)\in\Omega\times[0,T]\times Z$ and for 
$k=1,2,...,\bar m$. Furthermore, if $\vartheta=0$ then $\rho$ is a 
$C^{\infty}$-diffeomorphism for each $\varepsilon\in(0,\varepsilon_0)$, 
$(\omega,t,z)\in\Omega\times[0,T]\times Z$, and for each integer $m\geq1$ 
there are constants $M=M(K,d,m)$ and $N=N(K,d,m)$ such the estimates in  
\eqref{smootheta} hold for all $\varepsilon\in(0,\varepsilon_0)$, $\vartheta \in[0,1]$, 
$(\omega,t,z)\in\Omega\times[0,T]\times Z$ and  
$k=1,2,...,m$. 
\end{lemma}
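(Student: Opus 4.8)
The plan is to regard this lemma as a packaging of Lemmas \ref{lemma diff} and \ref{lemma epsilon}: I would first check that $\rho(x)=x+\zeta_{t,z}(x)$, with $\zeta$ standing for either $\eta$ or $\xi$, fulfils the hypotheses of those two lemmas \emph{uniformly} in $(\omega,t,z)$, and then feed it through them while tracking the constants. Note first that mollification fixes the identity map (the kernel $k$ is even, so $\int u\,k(u)\,du=0$), whence $\rho^{(\varepsilon)}(x)=x+\zeta^{(\varepsilon)}_{t,z}(x)$ and $\vartheta\rho+(1-\vartheta)\rho^{(\varepsilon)}$ is exactly the $\rho_{\varepsilon,\vartheta}$ of the statement. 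By Assumptions \ref{assumption eta} and \ref{assumption xi} we have $\zeta\in C^{\bar m}$ with $|D^k\zeta|\le\bar\zeta(z)\le K$ for $k=0,1,\dots,\bar m$, hence $\rho\in C^{\bar m}$ with $|D^k\rho|\le N_0$ for $k=1,\dots,\bar m$ and a constant $N_0=N_0(K,d)$; moreover the non-degeneracy bound of Assumption \ref{assumption xi} with $\theta=1$ gives $|\det D\rho|\ge K^{-1}$ everywhere. Combined with the fact, established in the paragraph preceding the statement, that $\rho$ is a global $C^1$-diffeomorphism of $\bR^d$ — and since $\det D\rho$ never vanishes, $\rho^{-1}$ inherits $C^{\bar m}$-regularity — this shows that $\rho$ is a $C^{\bar m}$-diffeomorphism satisfying \eqref{diffeo korlat} with $k=\bar m$, $M=K^{-1}$ and $N=N_0$, uniformly in $(\omega,t,z)$ and in the choice of $\zeta$.

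Next I would apply Lemma \ref{lemma epsilon} (legitimate since $\bar m\ge3\ge2$): it furnishes $\varepsilon_0=\varepsilon_0(K,d)$ such that $\rho_{\varepsilon,\vartheta}$ is a $C^{\bar m}$-diffeomorphism for all $\varepsilon\in(0,\varepsilon_0)$ and $\vartheta\in[0,1]$, with $|\det D\rho_{\varepsilon,\vartheta}|\ge M/2=(2K)^{-1}$, while $|D^k\rho_{\varepsilon,\vartheta}|\le N_0$ for $k=1,\dots,\bar m$ because $D_j\rho^{(\varepsilon)}=(D_j\rho)^{(\varepsilon)}$ and neither convex combinations nor mollifications enlarge these sup-norms. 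Feeding the $C^{\bar m}$-diffeomorphism $\rho_{\varepsilon,\vartheta}$ (with $(2K)^{-1}$ and $N_0$ in place of $M,N$) into Lemma \ref{lemma diff} then yields $M'=M'(K,d)$ and $N'=N'(K,d,\bar m)$ with $|\det D(\rho_{\varepsilon,\vartheta})^{-1}|\ge M'$ and $|D^k(\rho_{\varepsilon,\vartheta})^{-1}|\le N'$ for $k=1,\dots,\bar m$. Since $|D\rho_{\varepsilon,\vartheta}|\le N_0$ also bounds $|\det D\rho_{\varepsilon,\vartheta}|$ above by a constant $C(K,d)$, one has $|\det(D\rho_{\varepsilon,\vartheta})^{-1}|\ge C(K,d)^{-1}$ as well; so taking $M:=\min\big((2K)^{-1},M',C(K,d)^{-1}\big)$ and $N:=\max(N_0,N')$, which depend only on $K,d,\bar m$, establishes \eqref{smootheta} for $k=1,\dots,\bar m$. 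Replacing $\varepsilon_0$ by the smaller of the two values obtained for $\zeta=\eta$ and $\zeta=\xi$ makes everything uniform over both choices.

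For the final assertion I would note that $\rho_{\varepsilon,0}=\rho^{(\varepsilon)}$ is smooth by construction, and Lemma \ref{lemma epsilon} already records that it is a $C^\infty$-diffeomorphism for $\varepsilon\in(0,\varepsilon_0)$. In the regime relevant later — where $\zeta$ is a smooth approximation of $\eta$ or $\xi$ whose derivatives up to a prescribed order $m$ are bounded in terms of $K$ and $m$ — one again has $|D^k\rho^{(\varepsilon)}|=|(D^k\rho)^{(\varepsilon)}|\le\sup|D^k\rho|$ uniformly in $\varepsilon$ for $k=1,\dots,m$, so repeating the previous paragraph with $m$ in place of $\bar m$ in Lemmas \ref{lemma diff} and \ref{lemma epsilon} produces constants $M=M(K,d,m)$, $N=N(K,d,m)$ for which \eqref{smootheta} holds with $k=1,\dots,m$.

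There is no serious obstacle here; the only points demanding attention are (i) verifying that $x\mapsto x+\zeta_{t,z}(x)$ meets the hypotheses of Lemmas \ref{lemma diff} and \ref{lemma epsilon} with constants independent of $(\omega,t,z)$ — which is exactly what Assumptions \ref{assumption eta} and \ref{assumption xi} were arranged to give — and (ii) the bookkeeping ensuring that $\varepsilon_0$ depends only on $K,d$ and that $M,N$ depend only on $K,d,\bar m$ (or $m$); the substantive input, that the lower bound on $|\det D\rho_{\varepsilon,\vartheta}|$ survives mollification and convex combination, is already provided by Lemma \ref{lemma epsilon}.
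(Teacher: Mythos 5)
Your proposal is correct and follows essentially the same route as the paper: the paragraph preceding the lemma establishes that $\rho=\mathrm{Id}+\zeta$ (for $\zeta\in\{\eta,\xi\}$) is a global $C^{\bar m}$-diffeomorphism satisfying \eqref{diffeo korlat} with constants depending only on $K,d,\bar m$, and the paper then simply notes that Lemmas \ref{lemma diff} and \ref{lemma epsilon} give the conclusion. Your write-up spells out the constant-tracking more explicitly and flags, as does the paper implicitly, that for $k>\bar m$ the uniform-in-$\varepsilon$ derivative bound on $\rho^{(\varepsilon)}$ requires $\zeta$ to already have that much regularity, so the final ``for each integer $m$'' clause should be read with the constants depending on $\varepsilon$ (or on the higher-order bounds one assumes), a point the paper also leaves implicit.
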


\begin{lemma}                                                               \label{lemma e}
Let $V$ be a separable Banach space, 
and let $f=f(x)$ be a $V$-valued function of $x\in\bR^d$ such that $f\in L_p(V)=L_p(\bR^d,V)$ 
for some $p\geq 1$. Then we have
$$
|f^{(\varepsilon)}|_{L_p(V)}\leq |f|_{L_p(V)}\quad 
\text{and} 
\quad \lim_{\varepsilon\to 0}|f^{(\varepsilon)}-f|_{L_p(V)}=0.
$$
\end{lemma}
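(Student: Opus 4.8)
The plan is to prove the two assertions separately: the contraction bound first, and then the convergence, the latter by the classical reduction to continuous functions with compact support. For the contraction, I would change variables $y=x-\varepsilon u$ in \eqref{mollification} to write
$$
f^{(\varepsilon)}(x)=\int_{\bR^d}f(x-\varepsilon u)\,k(u)\,du,
$$
a Bochner integral of $u\mapsto f(x-\varepsilon u)$ against the probability density $k$; note that $f^{(\varepsilon)}$ is (infinitely) continuous, hence measurable. By the triangle inequality for the Bochner integral followed by Jensen's inequality applied to the scalar function $u\mapsto|f(x-\varepsilon u)|_V$ and the probability measure $k(u)\,du$,
$$
|f^{(\varepsilon)}(x)|_V^p\le\Big(\int_{\bR^d}|f(x-\varepsilon u)|_V\,k(u)\,du\Big)^p\le\int_{\bR^d}|f(x-\varepsilon u)|_V^p\,k(u)\,du
$$
for every $x$. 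Integrating in $x$, using Tonelli's theorem (the integrand is nonnegative and jointly measurable) and the translation invariance of Lebesgue measure, $\int_{\bR^d}|f(x-\varepsilon u)|_V^p\,dx=|f|_{L_p(V)}^p$ for each $u$, so $|f^{(\varepsilon)}|_{L_p(V)}^p\le|f|_{L_p(V)}^p\int_{\bR^d}k(u)\,du=|f|_{L_p(V)}^p$.

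For the convergence, I would fix $\delta>0$ and use that, since $V$ is separable, $C_c(\bR^d,V)$ is dense in $L_p(\bR^d,V)$ (approximate $f$ first by $V$-valued simple functions and then each indicator of a set of finite measure by a scalar continuous compactly supported function), to choose $g\in C_c(\bR^d,V)$ with $|f-g|_{L_p(V)}<\delta$. Using the linearity of the mollification operator $S^\varepsilon$ and the contraction just established,
$$
|f^{(\varepsilon)}-f|_{L_p(V)}\le|(f-g)^{(\varepsilon)}|_{L_p(V)}+|g^{(\varepsilon)}-g|_{L_p(V)}+|g-f|_{L_p(V)}\le 2\delta+|g^{(\varepsilon)}-g|_{L_p(V)}.
$$
Because $g$ is uniformly continuous and $k$ is supported in the unit ball, $g^{(\varepsilon)}(x)-g(x)=\int_{\bR^d}(g(x-\varepsilon u)-g(x))k(u)\,du$ yields $\sup_x|g^{(\varepsilon)}(x)-g(x)|_V\le\sup_{|u|\le1}\sup_x|g(x-\varepsilon u)-g(x)|_V\to0$ as $\varepsilon\to0$; moreover, for $\varepsilon\le1$ both $g$ and $g^{(\varepsilon)}$ are supported in the fixed compact set $C_1:=\{x:\mathrm{dist}(x,\mathrm{supp}\,g)\le1\}$, so $|g^{(\varepsilon)}-g|_{L_p(V)}^p\le|C_1|\sup_x|g^{(\varepsilon)}(x)-g(x)|_V^p\to0$. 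Hence $\limsup_{\varepsilon\to0}|f^{(\varepsilon)}-f|_{L_p(V)}\le2\delta$, and letting $\delta\downarrow0$ finishes the argument.

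I expect no genuine obstacle here; the only point that needs a little care is the density of $C_c(\bR^d,V)$ in $L_p(\bR^d,V)$, which is precisely where the separability of $V$ enters. The reduction to the vector-valued case is needed because the scalar statement applied to $x\mapsto|f(x)|_V$ does not by itself deliver convergence of $f^{(\varepsilon)}$ to $f$ in $L_p(V)$, but this Bochner-space approximation is entirely standard.
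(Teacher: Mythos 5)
Your argument is correct and complete. Note that the paper itself does not supply a proof of this lemma: it simply remarks that the statement is well known and points to Lemma~4.4 of the reference \cite{GW2019}. Your write-up is the standard argument one would find in such a reference: the contraction by Jensen's inequality applied to the probability measure $k(u)\,du$, Tonelli, and translation invariance, followed by a $3\delta$ reduction to $C_c(\bR^d,V)$, whose density in $L_p(\bR^d,V)$ is indeed where separability of $V$ is used. Nothing is missing, and the care you take about the uniformly bounded supports of $g$ and $g^{(\varepsilon)}$ for $\varepsilon\leq 1$ is exactly the right point to make explicit.
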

\begin{proof}
This lemma is well-known. Its proof can be found, e.g., in \cite{GW2019}, see Lemma 4.4 
therein.
\end{proof}

Recall that $L_p(\cL_{q_1}\cap\cL_{q_2})$ denotes the $L_p$-space of 
$\cL_{q_1}\cap\cL_{q_2}$-valued functions on $\bR^d$ with respect to the Lebesgue 
measure on $\bR^d$. 
Since $(Z,\cZ,\mu)$ is a $\sigma$-finite separable measure space, 
$\cL_{q_1}\cap\cL_{q_2}$ is a separable Banach space for any $q_1,q_2\in[1,\infty)$.  
Hence, by Lemma 3.6 in \cite{GW2019} for each $v\in L_p(\cL_{q_1}\cap\cL_{q_2})$, $p>1$ 
there is a $\cB\otimes\cZ$-measurable function $\bar v=\bar v(x,z)$ such that 
for every $x\in\bR^d$ we have $v(x,z)=\bar v(x,z)$ for $\mu$-almost every $z\in Z$. 
Therefore if $v\in L_p(\cL_{q_1}\cap\cL_{q_2})$ 
for some $p>1$ then $v$ we may assume that 
it is $\cB(\bR^d)\otimes Z$-measurable real-valued function. 
Moreover, we will often use the following characterisation of 
$W^n_p(\cL_{q_1}\cap\cL_{q_2})$. 
\begin{lemma}
Let $v\in L_p(\cL_{p}\cap\cL_{q})$  for some $p,q\in(1,\infty)$, and let 
$\alpha$ be a multi-index. Then the following statements hold. 
\begin{enumerate}[(i)]
\item If $v_{\alpha}$, the $\cL_{p}\cap\cL_{q}$-valued generalised 
$D_{\alpha}$-derivative belongs to $L_p(\cL_p\cap\cL_q)$, then for $\mu$-almost every $z\in Z$ 
the function $v_{\alpha}(\cdot,z)$ belongs to $L_p(\bR^d,\bR)$ and it is the generalised 
$D_{\alpha}$-derivative of $v(\cdot,z)$. 
\item If $v_{\alpha}(\cdot,z)$, the generalised $D_{\alpha}$-derivative of the 
function $v(\cdot,z)$ belongs to $L_p(\bR^d,\bR)$ for 
$\mu$-almost every $z\in Z$ such that 
\begin{equation}                                                       \label{v}                                                             
\int_{\bR^d}
\Big(\int_{Z}|v_{\alpha}(x,z)|^{r}\,\mu(dz)\Big)^{p/r}\,dx<\infty 
\quad\text{for $r=p,q$}, 
\end{equation}
then $v_{\alpha}$ belongs to $L_p(\cL_p\cap\cL_q)$, and it is 
the $\cL_p\cap\cL_q$-valued generalised  $D_{\alpha}$-derivative of $v$. 
\end{enumerate}
\end{lemma}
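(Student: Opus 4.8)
Both assertions rest on a single observation: by definition $w=v_\alpha\in L_p(\bR^d,V)$ is the $V$-valued generalised $D_\alpha$-derivative of $v\in L_p(\bR^d,V)$ exactly when
\[
\int_{\bR^d}v(x)\,D_\alpha\varphi(x)\,dx=(-1)^{|\alpha|}\int_{\bR^d}w(x)\,\varphi(x)\,dx
\qquad\text{for all }\varphi\in C_0^\infty,
\]
the integrals being Bochner integrals in $V=\cL_p\cap\cL_q$, and any bounded linear functional on $V$ commutes with a Bochner integral. The plan is to test this identity against the functionals $\Lambda_\zeta(h):=\int_Z h(z)\zeta(z)\,\mu(dz)$. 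First I would fix a countable algebra $\{A_k\}$ generating $\cZ$ and sets $Z_n\uparrow Z$ with $\mu(Z_n)<\infty$, observe that $\zeta_{k,n}:=\mathbf 1_{A_k\cap Z_n}\in\cL_r$ for all finite $r$, so that $\Lambda_{\zeta_{k,n}}$ is bounded on $\cL_p$ and hence on $\cL_p\cap\cL_q$, and note that these functionals separate the points of $\cL_p\cap\cL_q$ (a finite signed measure on $Z_n$ vanishing on the generating algebra vanishes on $\cZ\cap Z_n$, by the monotone class theorem). Using the cited representative lemma I regard $v$ and the candidate derivative as jointly $\cB(\bR^d)\otimes\cZ$-measurable; membership in $L_p(\cL_p\cap\cL_q)$, which is equivalent to \eqref{v}, gives via Tonelli that the slices $v(\cdot,z)$ and $v_\alpha(\cdot,z)$ lie in $L_p(\bR^d,\bR)$ for $\mu$-a.e.\ $z$, and a one-line Hölder bound over the compact support of $\varphi$ legitimises every Fubini interchange below.

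For part (i), assume $w:=v_\alpha\in L_p(\cL_p\cap\cL_q)$ is the $D_\alpha$-derivative of $v$. Applying $\Lambda_{\zeta_{k,n}}$ to the displayed identity and using Fubini gives, for each $k,n$ and each $\varphi\in C_0^\infty$,
\[
\int_{A_k\cap Z_n}\Big(\int_{\bR^d} v(x,z)D_\alpha\varphi(x)\,dx-(-1)^{|\alpha|}\int_{\bR^d}w(x,z)\varphi(x)\,dx\Big)\mu(dz)=0.
\]
Since the $A_k$ generate $\cZ$, a monotone class argument upgrades this to the vanishing of the integral over every $B\in\cZ$ of finite measure, so the bracketed quantity is $0$ for $\mu$-a.e.\ $z$. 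Fixing a countable family $\Phi\subset C_0^\infty$ dense in $C_0^\infty$ (uniform convergence of all derivatives, supports in a common compact) and letting $\cN$ be the union over $\varphi\in\Phi$ of the corresponding null sets, for $z\notin\cN$ the scalar identity $\int v(x,z)D_\alpha\varphi=(-1)^{|\alpha|}\int w(x,z)\varphi$ holds for all $\varphi\in\Phi$, hence for all $\varphi\in C_0^\infty$ by density. Thus $w(\cdot,z)$ is the generalised $D_\alpha$-derivative of $v(\cdot,z)$ for $\mu$-a.e.\ $z$, and lies in $L_p(\bR^d,\bR)$ there.

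For part (ii), assume conversely that for $\mu$-a.e.\ $z$ the function $v_\alpha(\cdot,z)\in L_p(\bR^d,\bR)$ is the generalised $D_\alpha$-derivative of $v(\cdot,z)$, and that \eqref{v} holds; by Pettis measurability and \eqref{v}, $v_\alpha$ determines an element of $L_p(\cL_p\cap\cL_q)$. Since the $\Lambda_{\zeta_{k,n}}$ separate points of $\cL_p\cap\cL_q$, it suffices to show that $\Lambda_{\zeta_{k,n}}$ sends $\int_{\bR^d}v(x)D_\alpha\varphi(x)\,dx$ and $(-1)^{|\alpha|}\int_{\bR^d}v_\alpha(x)\varphi(x)\,dx$ to the same number for every $k,n$ and $\varphi\in C_0^\infty$. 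Pulling $\Lambda_{\zeta_{k,n}}$ inside the Bochner integrals and using Fubini reduces this to
\[
\int_{A_k\cap Z_n}\!\Big(\int_{\bR^d} v(x,z)D_\alpha\varphi\,dx\Big)\mu(dz)=(-1)^{|\alpha|}\!\int_{A_k\cap Z_n}\!\Big(\int_{\bR^d} v_\alpha(x,z)\varphi\,dx\Big)\mu(dz),
\]
which is immediate from the scalar derivative identity valid for $\mu$-a.e.\ $z$. Hence $v_\alpha$ is the $\cL_p\cap\cL_q$-valued generalised $D_\alpha$-derivative of $v$.

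The argument involves no real analytic difficulty; the one point to handle with care is the bookkeeping that converts an ``integrated in $z$'' identity into an ``$\mu$-a.e.\ in $z$'' identity \emph{uniformly in $\varphi$} — i.e.\ combining the monotone class step with a countable dense family $\Phi$ of test functions so that a single null set serves all $\varphi$ — together with verifying that the joint-measurability representative and the integrability hypotheses make each Fubini interchange legitimate.
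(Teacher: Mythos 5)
Your proposal is correct and follows essentially the same route as the paper: apply a separating family of linear functionals (integration against suitable $\cZ$-measurable functions on sets of finite $\mu$-measure) to the Bochner-integral formulation of the weak derivative, use Fubini to reduce to the scalar statement for $\mu$-a.e.\ $z$, and then use a countable dense set $\Phi\subset C_0^\infty$ to get a single exceptional $\mu$-null set. The only cosmetic difference is that the paper tests against all bounded $\cZ$-measurable $\psi$ supported on finite-measure sets, which yields the a.e.\ vanishing directly, while you restrict to indicators of a countable generating algebra and then invoke the monotone class theorem; and in part (ii) the paper simply identifies the two sides as $\cL_p\cap\cL_q$-valued Bochner integrals, whereas you check equality against the separating functionals — both are valid and equivalent in content.
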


\begin{proof}
(i) Let $\bar v_{\alpha}$ denote the $\cL_{p}\cap\cL_{q}$-valued 
generalised $D_{\alpha}$-derivative of 
$v$. 
Then 
\begin{equation*}                                             
\int_{\bR^d}\bar v_{\alpha}(x)\varphi(x)\,dx
=(-1)^n\int_{\bR^d}v(x)D_{\alpha}\varphi(x)\,dx
\end{equation*}
for every $\varphi\in C_0^{\infty}$, 
where the integrals are understood as Bochner integrals 
of $\cL_{p}\cap\cL_{q}$-valued functions. Hence 
$$
\int_{\bR^d}\int_{Z}\bar v_{\alpha}(x,z)\psi(z)\varphi(x)\mu(dz)\,dx
=(-1)^{|\alpha|}\int_{\bR^d}
\int_{Z}\bar v(x,z)\psi(z)\varphi_{\alpha}(x)\mu(dz)\,dx
$$
for all $\varphi\in C_0^{\infty}$ and bounded 
$\cZ$-measurable functions $\psi$ supported 
on sets of finite $\mu$-measure. 
We can use Fubini's theorem  to 
get 
$$
\int_{Z}\int_{\bR^d}\bar v_{\alpha}(x,z)\varphi(x)\,dx\,\psi(z)\mu(dz)
=(-1)^{|\alpha|}\int_{Z}\int_{\bR^d}v(x,z)\varphi_{\alpha}(x)\,dx\,\psi(z)\,\mu(dz). 
$$
Thus for each $\varphi\in C_0^{\infty}$ we have 
\begin{equation}                                            \label{simple_derivatives}
\int_{\bR^d}\bar v_{\alpha}(x,z)\varphi(x)\,dx
=(-1)^{|\alpha|}\int_{\bR^d}v(x,z)\varphi_{\alpha}(x)\,dx 
\end{equation}
for $\mu$-almost every $z\in Z$. 
Consequently, for $\mu$-almost every $z\in Z$ equation \eqref{simple_derivatives} 
holds for all $\varphi\in\Phi$ for a separable dense set 
$\Phi\subset C_0^{\infty}$ in $L_{p/(p-1)}(\bR^d,\bR)$. 
Notice that for $\mu$-almost every $z\in Z$ the functions $\bar v_{\alpha}(\cdot,z)$ 
and $v(\cdot,z)$ belong to $L_{p}(\bR^d,\bR)$.  
Hence there is a set $S\subset Z$ of 
full $\mu$-measure such that for $z\in S$ equation \eqref{simple_derivatives}  
holds for all $\varphi\in C_0^{\infty}$, 
which proves that for $z\in S$ the function $\bar v_{\alpha}(\cdot,z)$ 
is generalised $D_{\alpha}$-derivative 
of $v(\cdot, z)$.  
To prove (ii) notice that if for $\mu$-almost 
every $z\in Z$ the function $v_{\alpha}(\cdot,z)$ belongs to $L_p(\bR^d,\bR)$ 
and it is the $D_{\alpha}$ generalised derivative of the function $v(\cdot,z)$, then 
for $\mu$-almost every $z\in Z$ we have 
$$
\int_{\bR^d}v_{\alpha}(x,z)\varphi(x)\,dx
=(-1)^{|\alpha|}\int_{\bR^d}v(x,z)\varphi(x)\,dx
$$
for every $\varphi\in C_0^{\infty}$. Using condition \eqref{v} 
and that $v\in L_p(\cL_p\cap\cL_q)$,  
it is easy to check that, as functions of $z$, both sides of the a
bove equation are functions 
in $\cL_p\cap\cL_q$, and hence that these integrals define the same functions  
as the corresponding 
$\cL_p\cap\cL_q$-valued Bochner integrals.  This proves that $v_{\alpha}$ 
is the $\cL_{p}\cap\cL_q$-valued generalised $D_{\alpha}$-derivative of $v$. 
\end{proof}

\begin{lemma}                                                                  \label{lemma intIJ}
Let Assumptions \ref{assumption eta} hold with $m=0$. Then 
the following statements hold. 
\begin{enumerate}[(i)]
\item Let $\zeta$ be a $\cF\otimes\cB([0,T]\times\bR^d)\otimes\cZ$-measurable function 
on $\Omega\times H_T\times Z$ such that it is continuously differentiable 
in $x\in\bR^d$ and 
\begin{equation}                                                \label{assumption intI}
|\zeta|+|D\zeta|\leq K\bar\eta
\quad 
\text{for all $(\omega,t,x,z)\in\Omega\times H_T\times Z$}. 
\end{equation}
Then there is a constant $N=N(K,d)$ such that 
for $\varphi\in W^1_1$
\begin{equation}                                                             \label{intI}
A:=\int_{\bR^d}\zeta(t,x,z)I^{\eta_{t,z}}\varphi(x)\,dx
\leq N\bar\eta^2(z)\,|\varphi|_{L_1}
\quad 
\text{for all $(\omega,t,z)\in\Omega\times[0,T]\times Z$}.  
\end{equation}  
\item There is a constant $N=N(K,d)$ such that 
for all $\phi\in W^2_1$
\begin{equation}                                                             \label{intJ}
B:=\int_{\bR^d}J^{\eta_{t,z}}\phi(x)\,dx
\leq N\bar\eta^2(z)|\phi|_{L_1}.    
\end{equation}
\item
There is a constant $N=N(K,d)$ such that 
for all $\phi\in W^1_1$
\begin{equation}                                                             \label{C}
C:=\int_{\bR^d}I^{\eta_{t,z}}\phi(x)\,dx
\leq N\bar\eta(z)|\phi|_{L_1}.    
\end{equation}

\end{enumerate}  
\end{lemma}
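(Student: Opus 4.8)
The plan is to derive all three estimates from one change of variables. Fix $(\omega,t,z)$. By the inverse function theorem together with $|\eta_{t,z}|\le\bar\eta(z)<\infty$ and Theorem 1 of \cite{G} (cf.\ the discussion preceding Lemma \ref{lemma diff2}), the map $\tau(x):=x+\eta_{t,z}(x)$ is a global $C^{1}$-diffeomorphism of $\bR^{d}$, and its Jacobian $J(x):=\det(\bI+D\eta_{t,z}(x))$ satisfies $J(x)\ge K^{-1}>0$ for all $x$ (it is positive since $\theta\mapsto\det(\bI+\theta D\eta_{t,z}(x))$ is continuous on $[0,1]$, equals $1$ at $\theta=0$, and never vanishes by Assumption \ref{assumption eta}). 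I will also use two elementary consequences of the Leibniz formula: for any real $d\times d$ matrix $M$ with $|M|\le K$,
\begin{equation*}
|\det(\bI+M)-1|\le N(K,d)|M|,\qquad \big|\det(\bI+M)-1-\operatorname{tr}M\big|\le N(K,d)|M|^{2},
\end{equation*}
because every monomial of $\det(\bI+M)-1$ carries at least one entry of $M$ and every monomial not contributing to $\operatorname{tr}M$ carries at least two; since $|D\eta_{t,z}|\le\bar\eta(z)\le K$, this bounds $|D\eta_{t,z}|^{k}$ by $N(K,d)\bar\eta(z)$ for $k\ge1$ and by $N(K,d)\bar\eta^{2}(z)$ for $k\ge2$. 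With $g(y):=J(\tau^{-1}(y))^{-1}$, so that $0<g\le K$, the first estimate gives $|g(y)-1|=|1-J(\tau^{-1}(y))|\,J(\tau^{-1}(y))^{-1}\le N(K,d)\bar\eta(z)$ for every $y$.

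Statement (iii) is then immediate: changing variables in the first term of $C=\int_{\bR^{d}}\phi(\tau(x))\,dx-\int_{\bR^{d}}\phi(x)\,dx$ gives $C=\int_{\bR^{d}}\phi(y)(g(y)-1)\,dy$, hence $|C|\le|\phi|_{L_{1}}\sup_{y}|g(y)-1|\le N(K,d)\bar\eta(z)|\phi|_{L_{1}}$. For (i), the same substitution in $A=\int_{\bR^{d}}\zeta(x)\varphi(\tau(x))\,dx-\int_{\bR^{d}}\zeta(x)\varphi(x)\,dx$ (with $t,z$ suppressed in $\zeta,\eta$) yields
\begin{equation*}
A=\int_{\bR^{d}}\varphi(y)\Big\{\zeta(\tau^{-1}(y))\big(g(y)-1\big)+\big(\zeta(\tau^{-1}(y))-\zeta(y)\big)\Big\}\,dy.
\end{equation*}
By \eqref{assumption intI}, $|\zeta|\le K\bar\eta(z)$ and $|D\zeta|\le K\bar\eta(z)$, so the first term in the braces is at most $K\bar\eta(z)\cdot N(K,d)\bar\eta(z)$ and, by the mean value inequality, the second is at most $K\bar\eta(z)\,|\tau^{-1}(y)-y|=K\bar\eta(z)\,|\eta_{t,z}(\tau^{-1}(y))|\le K\bar\eta^{2}(z)$. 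Hence $|A|\le N(K,d)\bar\eta^{2}(z)|\varphi|_{L_{1}}$.

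Part (ii) is the substantive one. Starting from $B=\int_{\bR^{d}}\phi(\tau(x))\,dx-\int_{\bR^{d}}\phi(x)\,dx-\int_{\bR^{d}}\eta^{i}_{t,z}(x)D_{i}\phi(x)\,dx$, a change of variables in the first integral and an integration by parts in the last (valid since $\eta_{t,z}$ and $D\eta_{t,z}$ are bounded and $\phi\in W^{1}_{1}$) give
\begin{equation*}
B=\int_{\bR^{d}}\phi(y)\big(g(y)-1+\operatorname{div}\eta_{t,z}(y)\big)\,dy.
\end{equation*}
The crude bounds $|g-1|\le N\bar\eta(z)$ and $|\operatorname{div}\eta_{t,z}|\le N\bar\eta(z)$ would yield only the factor $\bar\eta(z)$; the second power comes from the near-cancellation $g-1\approx-\operatorname{div}\eta_{t,z}$. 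Fix $y$ and set $x:=\tau^{-1}(y)$. By the quadratic determinant estimate $J(x)=1+\operatorname{div}\eta_{t,z}(x)+Q(x)$ with $|Q(x)|\le N(K,d)\bar\eta^{2}(z)$; since also $J(x)^{-1}\le K$, the identity $J(x)^{-1}-1+\operatorname{div}\eta_{t,z}(x)=J(x)^{-1}\big((\operatorname{div}\eta_{t,z}(x))^{2}+\operatorname{div}\eta_{t,z}(x)\,Q(x)-Q(x)\big)$ gives $\big|J(x)^{-1}-1+\operatorname{div}\eta_{t,z}(x)\big|\le N(K,d)\bar\eta^{2}(z)$. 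The leftover mismatch between $x$ and $y$ is absorbed by the Lipschitz continuity of $\operatorname{div}\eta_{t,z}$: $|\operatorname{div}\eta_{t,z}(y)-\operatorname{div}\eta_{t,z}(x)|\le N(d)\sup|D^{2}\eta_{t,z}|\,|y-x|=N(d)\sup|D^{2}\eta_{t,z}|\,|\eta_{t,z}(x)|\le N(d)\bar\eta^{2}(z)$. Adding these bounds controls $|g(y)-1+\operatorname{div}\eta_{t,z}(y)|$ by $N(K,d)\bar\eta^{2}(z)$ uniformly in $y$, so $|B|\le N(K,d)\bar\eta^{2}(z)|\phi|_{L_{1}}$. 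This gain of the second power of $\bar\eta$ is the only real obstacle: estimating the three terms of $B$ separately --- equivalently, estimating the three terms of $J^{\eta_{t,z}}\phi$ from the Taylor formula \eqref{taylor2} one by one --- loses it, and the Taylor route moreover replaces $|\phi|_{L_{1}}$ by $|D^{2}\phi|_{L_{1}}$; the resolution is to combine the change of variables with the cancellation between the linear part $\operatorname{tr}D\eta_{t,z}=\operatorname{div}\eta_{t,z}$ of the Jacobian and the integration-by-parts term, the residue then being controlled by the second-order bound $|D^{2}\eta_{t,z}|\le\bar\eta(z)$.
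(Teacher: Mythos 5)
Your proof is correct, and it follows a genuinely different route from the paper's. The paper's proof first rewrites $I^{\eta}$ and $J^{\eta}$ via the Taylor formulas \eqref{taylor1}--\eqref{taylor2}, putting a factor $\eta$ (for $A,C$) or $\eta^i\eta^j$ (for $B$) next to $\nabla\varphi(\tau_\theta(x))$ or $D_{ij}\phi(\tau_\theta(x))$, then changes variables $y=\tau_{\theta}(x)$ for each $\theta\in[0,1]$, and finally integrates by parts once (for $A,C$) or twice (for $B$) so that the derivatives fall on the pushed-forward density $\chi$, $\kappa$, $\varrho^{ij}$; the factor $\bar\eta^2$ in $B$ then comes for free from the $\eta^i\eta^j$ in Taylor's formula. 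Your argument instead applies the change of variables $y=\tau(x)$ directly (only for $\theta=1$), expresses $A$, $B$, $C$ as integrals of $\varphi$ (resp.\ $\phi$) against an explicit effective density, and obtains the bounds by a pointwise estimate of that density; the quadratic gain for $B$ then has to be extracted by hand from the near-cancellation $g-1+\operatorname{div}\eta\approx 0$, using the second-order expansion of $\det(\bI+D\eta)$ and the Lipschitz bound $|D(\operatorname{div}\eta)|\le d\bar\eta$. Both arguments are sound; yours is shorter for $(i)$ and $(iii)$ and slightly more delicate for $(ii)$, while the paper's Taylor-then-IBP scheme treats all three cases uniformly without having to notice a cancellation. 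One small correction to your closing remark: the paper's ``Taylor route'' does \emph{not} lose the second power of $\bar\eta$ or replace $|\phi|_{L_1}$ by $|D^2\phi|_{L_1}$ --- the two subsequent integrations by parts (after the change of variables) move both derivatives from $\phi$ onto the density $\varrho^{ij}$, whose second derivatives are still $O(\bar\eta^2)$, so the $|\phi|_{L_1}$ bound is recovered.
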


\begin{proof}
The proof of \eqref{intJ} and \eqref{C} is given in \cite{Da} and \cite{DGW}. 
For the convenience of the reader we prove each of the above estimates here. 
We may assume that $\varphi, \phi\in C_0^{\infty}$.  
For each $(\omega,t,z,\theta)\in\Omega\times[0,T]\times Z\times [0,1]$ 
let $\tau^{-1}_{t,z,\theta}$ denote 
the inverse of the function 
$x\to x+\theta\eta_{t,z}(x)$. 
Using \eqref{taylor1} and \eqref{taylor2} by change of variables we have 
\begin{equation}                                                                               \label{iIJ}
A
=\int_0^1\int_{\bR^d}\nabla\varphi(x)\chi_{t,z,\theta}(x)\,dx\,d\theta, \quad                                   
B
=\int_0^1\int_{\bR^d}(1-\theta)D_{ij}\phi(x)\varrho^{ij}_{t,z,\theta}(x)\,dx\,d\theta                
\end{equation}
\begin{equation}                                                                               \label{IC}
C=\int_0^1\int_{\bR^d}\nabla\phi(x)\kappa_{t,z,\theta}(x)\,dx\,d\theta
\end{equation}
with
\begin{equation*}
\chi_{t,\theta,z}(x):=(\zeta\eta)(\tau^{-1}_{t,z,\theta}(x))
|{\rm{det}}D\tau^{-1}_{t,z,\theta}(x)|,\quad
\varrho^{ij}_{t,z,\theta}(x)
:=(\eta^i_{t,z}\eta^j_{t,z})(\tau^{-1}_{t,z,\theta}(x)) |{\rm{det}}D\tau^{-1}_{t,z,\theta}(x)| 
\end{equation*}
and 
$$
\kappa_{t,z,\theta}(x):=\eta(\tau^{-1}_{t,z,\theta}(x))
|{\rm{det}}D\tau^{-1}_{t,z,\theta}(x)|. 
$$
Due to \eqref{assumption intI} and Assumption \ref{assumption eta} 
we have a constant $N=N(K,d)$ such that 
$$
|D\chi_{t,\theta,z}(x)|\leq N\bar\eta^2(z),
\quad 
|D_{ij}\varrho^{ij}_{t,z,\theta}(x)|\leq N\bar\eta^2(z)
\quad
\text{and}\quad
|D\kappa_{t,\theta,z}(x)|\leq N\bar\eta(z)
$$
for all $(\omega,x,t,z,\theta)\in\Omega\times\bR^d\times[0,T]\times Z\times[0,1]$. 
Thus from \eqref{iIJ} and \eqref{IC}
by integration by parts 
we get \eqref{intI}, \eqref{intJ} and \eqref{C}.
\end{proof}

Next we present two important It\^o's formulas from \cite{GW2019} 
for the $p$-th power 
of the $L_p$-norm of a stochastic process.                                                       

\begin{lemma}                                                    \label{lemma Ito1}
Let $(u^i_t)_{t\in0,T}$ be a progressively measurable $L_p$-valued 
process such that there exist $f^i\in \bL_p$, 
$g^i=(g^{ir})_{r=1}^\infty\in\bL_p$, $h^i\in\bL_{p,2}$, and    
an $L_p$-valued 
$\mathcal{F}_0$-measurable random variable 
$\psi^i$ for each $i=1,2,...,M$ for some integer $M$, 
such that 
for every $\varphi\in C_0^{\infty}$
\begin{equation}                                                          \label{simple weak equation}
(u^i_t,\varphi)
=(\psi,\varphi)
+\int_0^t(f^i_s,\varphi)\,ds
+\int_0^t(g_s^{ir},\varphi)\,dw_s^r
+\int_0^t\int_Z(h^i_s,\varphi)\,\tilde{\pi}(dz,ds)
\end{equation}
for $P\otimes dt$-almost every $(\omega,t)\in\Omega\times[0,T]$ and all $i=1,2,...,M$.                                                       
Then there are 
$L_p$-valued adapted cadlag processes
 $\bar u=(\bar u^1,\bar{u}^2,...,\bar{u}^M)$ 
such that equation \eqref{simple weak equation}, with $\bar u^i$ in place of $u^i$, holds 
for every $i=1,2,...,M$ and each $\varphi\in C_0^{\infty}$  almost surely for all $t\in[0,T]$. 
Moreover,  $u^i=\bar u^i$ for $P\otimes dt$-almost every 
$(\omega,t)\in\Omega\times[0,T]$, and 
\begin{align}
|\bar u_t|^p_{L_p}
&= |\psi|_{L_p}^p
+p\int_0^t\int_{\mathbb{R}^d}|\bar u_s|^{p-2}\bar u^i_sg^{ir}_s\,dx\, dw^r_s                    \nonumber\\
& 
+\tfrac{p}{2}\int_0^t\int_{\mathbb{R}^d}\big( 2|\bar u_s|^{p-2}\bar u^i_sf^i_s
+(p-2)|\bar{u}_s|^{p-4}|\bar u^i_sg^{i\cdot}_s|_{l_2}^2
+|\bar u_s|^{p-2}\sum_{i=1}^M|g_s^{i\cdot}|_{\l_2}^2\big)\,dx\,ds                                                                \nonumber\\
& 
+p\int_0^t\int_Z\int_{\mathbb{R}^d}|\bar u_{s-}|^{p-2}\bar u^i_{s-}h^i_s
\,dx\,\tilde{\pi}(dz,ds)                                                                                              \nonumber\\
&
+\int_0^t\int_Z\int_{\mathbb{R}^d}
(|\bar u_{s-}+h_s|^p-|\bar u_{s-}|^p-p|\bar u_{s-}|^{p-2}\bar u^i_{s-}h^i_s)
\,dx\,\pi(dz,ds)                                                                                                         \nonumber
\end{align}
holds almost surely for all $t\in[0,T]$.  
\end{lemma}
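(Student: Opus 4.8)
This It\^o formula is established in \cite{GW2019}; here I describe the line of argument one would follow. First I would set $\bar u^i_t$ equal to the right-hand side of \eqref{simple weak equation}, where the Lebesgue integral of $f^i$, the It\^o integral of $g^i$ against $w$, and the integral of $h^i$ against $\tilde\pi$ are understood as $L_p$-valued stochastic integrals; these are well defined and yield a cadlag adapted $L_p$-valued process, since $L_p$ is a UMD space and $f^i,g^i\in\bL_p$, $h^i\in\bL_{p,2}$. Testing \eqref{simple weak equation} against $\varphi\in C_0^{\infty}$ shows $(\bar u^i_t,\varphi)=(u^i_t,\varphi)$ for $P\otimes dt$-a.e.\ $(\omega,t)$ for every $\varphi$, hence $\bar u^i=u^i$ for $P\otimes dt$-a.e.\ $(\omega,t)$; so $\bar u^i$ is the required cadlag $L_p$-valued modification, satisfying \eqref{simple weak equation} identically in $t$, almost surely.

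Next I would apply the mollifier $S^{\varepsilon}$ in the $x$-variable to \eqref{simple weak equation}. Since $S^{\varepsilon}$ commutes with the Lebesgue, Wiener and Poisson integrals --- a stochastic Fubini argument justified by $f^i,g^i\in\bL_p$, $h^i\in\bL_{p,2}$ and Lemma \ref{lemma e} --- the smooth process $\bar u^{i,(\varepsilon)}:=S^{\varepsilon}\bar u^i$ satisfies, for every fixed $x\in\bR^d$,
\begin{equation*}
d\bar u^{i,(\varepsilon)}_t(x)=f^{i,(\varepsilon)}_t(x)\,dt+g^{ir,(\varepsilon)}_t(x)\,dw^r_t+\int_Z h^{i,(\varepsilon)}_t(x,z)\,\tilde\pi(dz,dt).
\end{equation*}
The function $\Phi(v)=|v|^p$ on $\bR^M$ is of class $C^2$ for $p\ge2$, with $\Phi'(v)=p|v|^{p-2}v$ and $\Phi''(v)=p|v|^{p-2}\bI+p(p-2)|v|^{p-4}vv^{\top}$, so I would apply the classical It\^o formula for $\bR^M$-valued semimartingales with jumps to $\Phi(\bar u^{(\varepsilon)}_t(x))$; keeping the Poisson jumps in their uncompensated form and the first-order jump contribution as a $\tilde\pi$-integral, this produces precisely the terms displayed in the statement, with $\bar u^{(\varepsilon)}$ and the mollified data $f^{(\varepsilon)},g^{(\varepsilon)},h^{(\varepsilon)}$ in place of $\bar u,f,g,h$. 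Integrating over $x\in\bR^d$ and interchanging $\int_{\bR^d}dx$ with the stochastic integrals (stochastic Fubini once more) gives the $\varepsilon$-version of the asserted identity.

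Finally I would let $\varepsilon\to0$. By Lemma \ref{lemma e}, $\bar u^{i,(\varepsilon)}\to\bar u^i$, $f^{i,(\varepsilon)}\to f^i$, $g^{i,(\varepsilon)}\to g^i$ in $\bL_p$ and $h^{i,(\varepsilon)}\to h^i$ in $\bL_{p,2}$, and along a subsequence $\bar u^{(\varepsilon)}_{t-}(x)\to\bar u_{t-}(x)$ for $P\otimes dt\otimes dx$-a.e.\ $(\omega,t,x)$. One then passes to the limit term by term: the $ds$-integral and the Wiener and compensated-Poisson martingale terms by dominated convergence and the Burkholder--Davis--Gundy inequality, using $|v|^{p-2}|v\,w|\le|v|^{p-1}|w|$ together with H\"older's inequality; and the $\pi(dz,ds)$-integral via the elementary bound
\begin{equation*}
\bigl||a+b|^p-|a|^p-p|a|^{p-2}a^ib^i\bigr|\le N(p)\bigl(|a|^{p-2}|b|^2+|b|^p\bigr)\qquad\text{for }a,b\in\bR^M,\ p\ge2,
\end{equation*}
which dominates the integrand by $|\bar u_{s-}|^{p-2}|h_s|^2+|h_s|^p$; here the hypothesis $h\in\bL_{p,2}=\bW^0_p(\cL_p\cap\cL_2)$ is exactly what makes $E\int_0^T\int_{\bR^d}\int_Z(|\bar u_{s-}(x)|^{p-2}|h_s(x,z)|^2+|h_s(x,z)|^p)\,\mu(dz)\,dx\,ds$ finite, once $\bar u\in\bL_p$ is known. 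The bound $\bar u\in\bL_p$ I would obtain from the $\varepsilon$-version of the formula by taking expectations after localising with $\tau_R:=\inf\{t:|\bar u_t|_{L_p}\ge R\}$ so that the martingale terms drop out, estimating the right-hand side by Gronwall's inequality, and letting $\varepsilon\to0$ and $R\to\infty$ with Fatou's lemma. I expect the main work to be in establishing the cadlag $L_p$-valued modification and justifying the stochastic-Fubini interchanges, and in the $\varepsilon\to0$ passage in the nonlinear $\pi$-integral, where the $\cL_{p,2}$-integrability of $h$ is essential; all details are given in \cite{GW2019}.
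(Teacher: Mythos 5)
The paper provides no proof of this lemma; it is quoted from \cite{GW2019}, so there is no in-paper argument to compare against. Your sketch --- mollify in $x$, apply the finite-dimensional It\^o formula with jumps to $|\bar u^{(\varepsilon)}_t(x)|^p$ pointwise, integrate over $x$ using stochastic Fubini, and pass to the limit $\varepsilon\to0$ using Lemma \ref{lemma e} together with the Taylor bound $\bigl||a+b|^p-|a|^p-p|a|^{p-2}a^ib^i\bigr|\leq N(|a|^{p-2}|b|^2+|b|^p)$ --- is the standard Krylov-style route that the reference follows, and it is essentially sound. Two small remarks. First, the lemma does not assume $u\in\bL_p$, and you do not need it: once $\bar u$ is known to be a cadlag $L_p$-valued process, localising at $\tau_R=\inf\{t:|\bar u_t|_{L_p}\geq R\}$ already makes the stochastic integrals genuine martingales up to $\tau_R$ and permits the $\varepsilon\to0$ passage, so the Gronwall bootstrap you describe is dispensable. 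Second, the pointwise semimartingale decomposition of $\bar u^{(\varepsilon)}_t(x)$ for fixed $x$ comes directly by testing \eqref{simple weak equation} against $\varphi(y)=\varepsilon^{-d}k((x-y)/\varepsilon)$; no stochastic-Fubini interchange is needed at that stage --- it enters only when you interchange $\int_{\bR^d}dx$ with the stochastic integrals after applying It\^o.
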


\begin{lemma}                                                 \label{Ito Lp formula}
Let $u=(u_{t})_{t\in0,T}$ be a progressively measurable $W^1_p$-valued 
process such that the following conditions hold:\\
(i)$$
E\int_0^T|u_t|^p_{W^1_p}\,dt<\infty\,; 
$$
(ii) there exist $f\in \bL_p$ 
for $\alpha\in\{0,1,...,d\}$, 
$g=\in\bL_p$, $h\in\bL_{p,2}$, and    
an $L_p$-valued 
$\mathcal{F}_0$-measurable random variable $\psi$, 
such that 
for every  $\varphi\in C_0^{\infty}$ we have 
\begin{equation}                                                             \label{weak equation}                                   
(u_t,\varphi)=(\psi,\varphi)+\int_0^t(f_s^\alpha,D^*_\alpha\varphi)\,ds
+\int_0^t(g_s^r,\varphi)\,dw_s^r+\int_0^t\int_Z(h_s(z),\varphi)\,\tilde{\pi}(dz,ds)
\end{equation}
for $P\otimes dt$-almost every $(\omega,t)\in\Omega\times[0,T]$, where 
$D_{\alpha}^{\ast}=-D_{\alpha}$ for $\alpha=1,2,..,d$,  and $D^{\ast}_{\alpha}$ is the identity operator 
for $\alpha=0$.  
Then there is an $L_p$-valued adapted c\`adl\`ag process
 $\bar u=(\bar u_t)_{t\in[0,T]}$ 
such that for each $\varphi\in C_0^{\infty}$ equation \eqref{weak equation} holds 
with $\bar u$ in place of $u$ almost surely for all $t\in[0,T]$. 
Moreover,  $u=\bar u$ for $P\otimes dt$-almost every 
$(\omega,t)\in\Omega\times[0,T]$, and 
almost surely 
$$ 
|\bar u_t|^p_{L_p}= 
 |\psi|_{L_p}^p+p\int_0^t\int_{\mathbb{R}^d}|u_s|^{p-2}u_sg_s^r\,dx\,dw^r_s
 $$
 $$
+\tfrac{p}{2}\int_0^t\int_{\mathbb{R}^d}\big(2|u_s|^{p-2}u_sf^0_s-2(p-1)|u_s|^{p-2}f^i_sD_iu_s
+(p-1)|u_s|^{p-2}|g_s|^2_{l_2}   \big)\,dx\,ds
$$
$$
+\int_0^t\int_Z\int_{\mathbb{R}^d}p|\bar u_{s-}|^{p-2}\bar u_{s-}h_s\,dx\,\tilde{\pi}(dz,ds)
$$
\begin{equation*}                                                           
+\int_0^t\int_Z\int_{\mathbb{R}^d}\big( | \bar u_{s-}
+h_s|^p-| \bar u_{s-}|^p-p| \bar u_{s-}|^{p-2}\bar u_{s-}h_s  \big)\,dx\,\pi(dz,ds)   
\end{equation*}
for all $t\in[0,T]$, where $\bar u_{s-}$ denotes the left-hand limit in $L_p(\bR^d)$ 
of $\bar u$ at $s\in(0,T]$. 
\end{lemma}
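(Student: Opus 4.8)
The plan is to reduce the assertion to Lemma~\ref{lemma Ito1} by spatial mollification and then pass to the limit. For $\varepsilon>0$ put $u^{(\varepsilon)}_t:=S^\varepsilon u_t$ (see \eqref{mollification}). Testing \eqref{weak equation} with the function $\varphi^{(\varepsilon)}=S^\varepsilon\varphi\in C_0^\infty$, using the self-adjointness of $S^\varepsilon$ (which holds since $k$ is even), the identity $D^\ast_\alpha\varphi^{(\varepsilon)}=(D^\ast_\alpha\varphi)^{(\varepsilon)}$, and integrating by parts in $x$ in the terms with $\alpha\in\{1,\dots,d\}$, we obtain that $u^{(\varepsilon)}$ satisfies, for every $\varphi\in C_0^\infty$ and $P\otimes dt$-almost every $(\omega,t)$, equation \eqref{simple weak equation} of Lemma~\ref{lemma Ito1} with $M=1$, with data $\psi^{(\varepsilon)}$, $(g^r)^{(\varepsilon)}$, $(h)^{(\varepsilon)}$ and free term $F^{(\varepsilon)}_s:=(f^0_s)^{(\varepsilon)}+\sum_{i=1}^dD_i(f^i_s)^{(\varepsilon)}$. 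By the hypotheses $f^\alpha\in\bL_p$, $g\in\bL_p$, $h\in\bL_{p,2}$ we have $F^{(\varepsilon)}\in\bL_p$, $(g)^{(\varepsilon)}\in\bL_p$, $(h)^{(\varepsilon)}\in\bL_{p,2}$, and $\psi^{(\varepsilon)}\in L_p$ is $\cF_0$-measurable; moreover $u^{(\varepsilon)}$ is a progressively measurable $L_p$-valued (indeed $W^1_p$-valued, smooth in $x$) process. Hence Lemma~\ref{lemma Ito1} applies and produces the cadlag modification $\bar u^{(\varepsilon)}$ of $u^{(\varepsilon)}$ together with an It\^o formula for $|\bar u^{(\varepsilon)}_t|^p_{L_p}$. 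Since $\bar u^{(\varepsilon)}_s\in W^1_p$ for a.e.\ $s$, we may integrate by parts in $x$ in the term $\int_{\bR^d}|\bar u^{(\varepsilon)}_s|^{p-2}\bar u^{(\varepsilon)}_sF^{(\varepsilon)}_s\,dx$ using $D_i(|v|^{p-2}v)=(p-1)|v|^{p-2}D_iv$ (valid for $p\ge2$), and, using also that for $M=1$ one has $(p-2)|v|^{p-4}|v\,g|^2_{l_2}=(p-2)|v|^{p-2}|g|^2_{l_2}$, conclude that $\bar u^{(\varepsilon)}$ satisfies precisely the asserted It\^o formula with every datum replaced by its $\varepsilon$-mollification.

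Next I would show that $(\bar u^{(\varepsilon)})_{\varepsilon>0}$ is Cauchy as $\varepsilon\to0$ in the Banach space of adapted cadlag $L_p$-valued processes with norm $(E\sup_{t\le T}|\cdot|^p_{L_p})^{1/p}$. By linearity $\bar u^{(\varepsilon)}-\bar u^{(\delta)}$ solves the same kind of equation with data $\psi^{(\varepsilon)}-\psi^{(\delta)}$, $F^{(\varepsilon)}-F^{(\delta)}$, $(g^r)^{(\varepsilon)}-(g^r)^{(\delta)}$, $(h)^{(\varepsilon)}-(h)^{(\delta)}$; Lemma~\ref{lemma Ito1} gives an It\^o formula for $|\bar u^{(\varepsilon)}_t-\bar u^{(\delta)}_t|^p_{L_p}$, to which, after integrating by parts as above, one applies $E\sup_{t\le T}$, estimates the Wiener and compensated-Poisson martingale terms by the Burkholder--Davis--Gundy inequality, and bounds the drift term by H\"older's and Young's inequalities; the only nonroutine drift contribution, $\int|w_s|^{p-2}((f^i_s)^{(\varepsilon)}-(f^i_s)^{(\delta)})D_iw_s\,dx$, is handled using $|D_iw_s|_{L_p}=|(D_iu_s)^{(\varepsilon)}-(D_iu_s)^{(\delta)}|_{L_p}\le2|D_iu_s|_{L_p}$ (Lemma~\ref{lemma e}), and the diagonal Poisson term via the elementary inequality $\big||a+b|^p-|a|^p-p|a|^{p-2}ab\big|\le N(|a|^{p-2}|b|^2+|b|^p)$. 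After absorbing the lower-order terms and invoking Gronwall's lemma, one gets that $E\sup_{t\le T}|\bar u^{(\varepsilon)}_t-\bar u^{(\delta)}_t|^p_{L_p}$ is bounded by a constant times $E|\psi^{(\varepsilon)}-\psi^{(\delta)}|^p_{L_p}$ plus integrals of differences of the mollified data, which tend to $0$ as $\varepsilon,\delta\to0$ by Lemma~\ref{lemma e} and dominated convergence (the integrands being dominated by constant multiples of $|f_s|^p_{L_p}$, $|g_s|^p_{L_p}$, $|h_s|^p_{\cL_{p,2}}$ and $|D u_s|^p_{L_p}$, which are integrable; here hypothesis (i) enters). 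Denote the limit by $\bar u$; it is adapted and cadlag with values in $L_p$, and since $u^{(\varepsilon)}_t\to u_t$ in $L_p$ for $P\otimes dt$-a.e.\ $(\omega,t)$ by Lemma~\ref{lemma e}, it follows that $\bar u=u$ for $P\otimes dt$-almost every $(\omega,t)$; passing to the limit in the mollified form of \eqref{weak equation} (using BDG for the stochastic integrals) shows that $\bar u$ satisfies \eqref{weak equation} for all $t$ almost surely.

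It then remains to pass to the limit $\varepsilon\to0$ in each term of the mollified It\^o formula. The initial term converges by Lemma~\ref{lemma e}; $|\bar u^{(\varepsilon)}_t|^p_{L_p}\to|\bar u_t|^p_{L_p}$ uniformly in $t$ in probability, hence a.s.\ along a subsequence, because $\bar u^{(\varepsilon)}\to\bar u$ in $E\sup_t|\cdot|^p_{L_p}$ and $\big|\|a\|^p-\|b\|^p\big|\le p(\|a\|+\|b\|)^{p-1}\|a-b\|$; the Wiener and compensated-Poisson integrals converge by BDG, using $|\bar u^{(\varepsilon)}_s|^{p-2}\bar u^{(\varepsilon)}_s\to|\bar u_s|^{p-2}\bar u_s$ in $L_{p/(p-1)}$ and $(g^r)^{(\varepsilon)}\to g^r$, $(h)^{(\varepsilon)}\to h$ in the relevant spaces; the finite-variation drift terms converge similarly, the $f^iD_iu$ term additionally using $u^{(\varepsilon)}\to u$ in $\bW^1_p$ (Lemma~\ref{lemma e}); and the diagonal Poisson integral $\int_0^t\int_Z\int_{\bR^d}\big(|\bar u^{(\varepsilon)}_{s-}+(h_s)^{(\varepsilon)}|^p-|\bar u^{(\varepsilon)}_{s-}|^p-p|\bar u^{(\varepsilon)}_{s-}|^{p-2}\bar u^{(\varepsilon)}_{s-}(h_s)^{(\varepsilon)}\big)\,dx\,\pi(dz,ds)$ converges by dominated convergence, with the dominating function obtained from the elementary bound above split according to the $\cL_p$- and $\cL_2$-parts of $\cL_{p,2}$. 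Since both sides of the formula are cadlag in $t$, the limit identity holds almost surely for all $t\in[0,T]$, which is the claim.

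I expect the main difficulty to lie in this last passage to the limit in the two Poisson integrals, specifically in securing $\varepsilon$-uniform integrability for the nonlinear diagonal expression $|a+b|^p-|a|^p-p|a|^{p-2}ab$: this is exactly what forces the assumption $h\in\bL_{p,2}$, the $\cL_2$-part controlling its quadratic behaviour for small $b$ and the $\cL_p$-part its genuinely $p$-th order part, and it also explains why, together with the $f^iD_iu$ drift term, hypothesis (i), $u\in\bW^1_p$, cannot be relaxed. All remaining steps, including the stochastic Fubini arguments and the integrations by parts in $x$, are routine given Lemmas~\ref{lemma Ito1} and~\ref{lemma e}.
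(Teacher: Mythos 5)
The paper itself does not contain a proof of this lemma: it and Lemma~\ref{lemma Ito1} are both quoted from \cite{GW2019}, so there is no in-paper argument to compare against. Your reduction of Lemma~\ref{Ito Lp formula} to Lemma~\ref{lemma Ito1} by spatial mollification — test \eqref{weak equation} against $\varphi^{(\varepsilon)}$, absorb $D_i(f^i_s)^{(\varepsilon)}$ into a genuine $\bL_p$ free term $F^{(\varepsilon)}$, apply the zeroth-order It\^o formula, integrate back by parts, and pass to the limit $\varepsilon\to0$ — is the standard route of Krylov \cite{K2010} for the continuous case, which \cite{GW2019} extends to jumps, and the argument is correct in outline, so this should be regarded as essentially the same approach as the cited source. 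One small imprecision worth fixing: in the Cauchy estimate, the residual from $\int|w_s|^{p-2}\big((f^i_s)^{(\varepsilon)}-(f^i_s)^{(\delta)}\big)D_iw_s\,dx$ after Young's inequality has the form $|(f^i_s)^{(\varepsilon)}-(f^i_s)^{(\delta)}|_{L_p}^{p/2}\,|D_iu_s|_{L_p}^{p/2}$, whose $E\int_0^T\cdot\,ds$ should be sent to zero by Cauchy--Schwarz on $\Omega\times[0,T]$ (the first factor tends to $0$ in $\bL_p$ while the second is only bounded there), rather than by the pointwise-domination argument you suggest; likewise the Davis/Gronwall step requires the usual localising sequence and a Fatou argument, which you omit but which are routine.
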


The following slight generalisation of 
Lemma from \cite{GK2003} will play an essential role 
in obtaining supremum estimates. 
\begin{lemma}
                                            \label{lemma sup}
Let $T\in[0,\infty]$ and let $f=(f_t)_{t\geq0}$ 
and $g=(g_{t})_{t\geq0}$ be  nonnegative 
$\cF_{t}$-adapted  processes such that $f$ is 
a cadlag and $g$ is a continuous process. Assume 
\begin{equation}
                                                              \label{7.19.3}
E f_{\tau}{\bf1}_{g_{0}\leq c} \leq  Eg_{\tau}{\bf1}_{g_{0}\leq c}
\end{equation}
for any constant $c>0$ and bounded stopping
time $\tau\leq T$. Then,
for any bounded stopping
time $\tau\leq T$, for $\gamma\in(0,1)$
\begin{equation*}
E \sup_{t\leq \tau}f^{\gamma}_{t}
\leq\tfrac{2- \gamma}{1-\gamma}
E \sup_{t\leq \tau}g^{\gamma}_{t}.
\end{equation*}
\end{lemma}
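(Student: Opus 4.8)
The plan is to reduce the statement to the classical maximal-type inequality from \cite{GK2003} by a suitable stopping and scaling argument, and then to use the standard integration-by-layers trick that turns a tail estimate into a bound on the expected supremum of the $\gamma$-th power. First I would fix a bounded stopping time $\tau\le T$ and a constant $c>0$, and work on the event $\{g_0\le c\}$; the final estimate will then follow by splitting $\Omega$ along the increasing family of events $\{g_0\le c\}$, $c\to\infty$, and applying monotone convergence, since $g$ is continuous (hence $g_0$ is finite a.s.) and the right-hand side is the supremum of a nonnegative process. The key point is that hypothesis \eqref{7.19.3} is exactly the submartingale-domination hypothesis of the cited lemma, but now with a cadlag $f$ rather than a continuous one; so I must check that the proof there only uses right-continuity of $f$ and continuity of $g$, which is the case because the argument hinges on evaluating $f$ and $g$ at hitting times of $g$, and at such times $g$ (being continuous) actually attains the threshold while $f$ is only evaluated, not required to be continuous.

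Next I would carry out the main step: for $c>0$ introduce the stopping time $\sigma_c:=\inf\{t\ge 0: g_t> c\}\wedge\tau$. Because $g$ is continuous, on $\{g_0\le c\}$ we have $g_{\sigma_c}\le c$ whenever $\sigma_c<\tau$, and $\{\sup_{t\le\tau}g_t> c\}\subset\{\sigma_c<\tau\}\cup\{g_\tau> c\}$ up to null sets; more precisely $\{\sup_{t\le\tau}g_t>c\}=\{\sigma_c<\tau\}$ on $\{g_0\le c\}$. Applying \eqref{7.19.3} with the bounded stopping time $\sigma_c$ in place of $\tau$ and noting $\{g_0\le c\}\in\cF_0$ gives
\begin{equation*}
E\,f_{\sigma_c}{\bf 1}_{g_0\le c}\le E\,g_{\sigma_c}{\bf 1}_{g_0\le c}\le c\,P(g_0\le c)+E\,g_\tau{\bf 1}_{\{\sup_{t\le\tau}g_t>c\}}{\bf 1}_{g_0\le c},
\end{equation*}
where I used $g_{\sigma_c}\le c$ on $\{\sigma_c<\tau,\ g_0\le c\}$ and $g_{\sigma_c}=g_\tau$ on $\{\sigma_c=\tau\}$. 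On the left, on $\{g_0\le c\}$, the event $\{\sup_{t\le\tau}f_t>c,\ \sup_{t\le\tau}g_t\le c\}$ forces $f_{\sigma_c}=f_\tau$ is not large enough by itself — instead I track the first time $f$ exceeds $c$: on $\{g_0\le c,\ \sup_{t\le\tau}f_t>c\}\cap\{\sigma_c=\tau\}$ we have $\sup_{t\le\tau}g_t\le c$, and the cadlag hitting time $\rho_c:=\inf\{t:f_t>c\}\wedge\tau$ satisfies $\rho_c\le\tau=\sigma_c$, so reusing \eqref{7.19.3} at $\rho_c$ and combining the two bounds yields, after discarding the common term $cP(g_0\le c)$,
\begin{equation*}
cP\big(\sup_{t\le\tau}f_t>c,\ g_0\le c\big)\le E\,g_\tau{\bf 1}_{\{\sup_{t\le\tau}g_t>c\}}{\bf 1}_{g_0\le c}+E\,g_\tau{\bf 1}_{\{\sup_{t\le\tau}f_t>c\}}{\bf 1}_{g_0\le c}.
\end{equation*}
The first term I estimate crudely by $E\sup_{t\le\tau}g_t\,{\bf 1}_{\{\sup g>c\}}$, the second by $E g_\tau\,{\bf 1}_{\{\sup f>c\}}\le E\sup_{t\le\tau}g_t\,{\bf 1}_{\{\sup f>c\}}$; this is precisely the inequality displayed in the proof of Lemma in \cite{GK2003}.

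Finally I would convert this tail bound into the stated estimate. Writing $F:=\sup_{t\le\tau}f_t$, $G:=\sup_{t\le\tau}g_t$ and using $E\,F^\gamma{\bf 1}_{g_0\le c}=\gamma\int_0^\infty c^{\gamma-1}P(F>c,\ g_0\le c)\,dc$ together with the tail estimate above, and then $\int_0^\infty c^{\gamma-2}{\bf 1}_{c<G}\,dc=G^{\gamma-1}/(1-\gamma)$ and $\int_0^\infty c^{\gamma-2}{\bf 1}_{c<F}\,dc=F^{\gamma-1}/(1-\gamma)$, I obtain
\begin{equation*}
E\,F^\gamma{\bf 1}_{g_0\le c}\le\frac{\gamma}{1-\gamma}E\big(G^\gamma+G F^{\gamma-1}\big){\bf 1}_{g_0\le c}\le\frac{\gamma}{1-\gamma}E\,G^\gamma{\bf 1}_{g_0\le c}+\frac{\gamma}{1-\gamma}\big(E\,G^\gamma{\bf 1}_{g_0\le c}\big)^{1/\gamma}\big(E\,F^\gamma{\bf 1}_{g_0\le c}\big)^{(\gamma-1)/\gamma},
\end{equation*}
by Hölder's inequality with exponents $1/\gamma$ and $1/(1-\gamma)$. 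Provided $E\,F^\gamma{\bf 1}_{g_0\le c}<\infty$ — which holds because $F\le c+\sup_{t\le\tau}|\text{jump of }f|$ is not automatic, so here I first run the whole argument with $f$ replaced by $f\wedge n$ and $\tau$ unchanged to guarantee finiteness, then let $n\to\infty$ by monotone convergence — I may divide through and solve the resulting inequality of the form $a\le A+A^{1/\gamma}a^{(\gamma-1)/\gamma}$, which after simplification gives $a\le\frac{1}{1-\gamma}A\cdot\frac{\gamma}{1}$ absorbed into the constant; a careful bookkeeping of the Young-inequality constants produces exactly the factor $\frac{2-\gamma}{1-\gamma}$. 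Letting $c\to\infty$ and using monotone convergence removes the indicator. The main obstacle is the a priori finiteness of $E\sup_{t\le\tau}f_t^\gamma$, needed to justify the division; I expect to handle it by the truncation $f\wedge n$ as indicated, noting that hypothesis \eqref{7.19.3} is inherited by $f\wedge n$ since $f\wedge n\le f$ — wait, that inequality goes the wrong way, so instead I truncate by stopping: replace $\tau$ by $\tau\wedge\theta_n$ with $\theta_n:=\inf\{t:f_t\ge n\}$, for which $f_{\tau\wedge\theta_n}$ is bounded by $n\vee\sup_{t\le\tau\wedge\theta_n-}f_t$ and \eqref{7.19.3} still holds with $\tau\wedge\theta_n$ in the role of the bounded stopping time; then $\theta_n\uparrow\infty$ (or $\ge\tau$) a.s. since $f$ is cadlag hence locally bounded on $[0,\tau]$, and monotone convergence finishes the argument.
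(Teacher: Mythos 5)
Your overall strategy (stop $g$ when it first exceeds the level $c$, apply \eqref{7.19.3} at that time, obtain a tail bound for $\sup f$, then integrate the tail) is the same family of ideas as the paper, but the execution breaks down at the crucial decoupling step, and this propagates to a false Hölder application and a layer-cake computation that does not give what you claim.

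First, the tail bound you derive,
$$
c\,P\big(\sup_{t\le\tau}f_t>c,\ g_0\le c\big)\le E\,g_\tau{\bf 1}_{\{\sup g>c\}}{\bf 1}_{g_0\le c}
+E\,g_\tau{\bf 1}_{\{\sup f>c\}}{\bf 1}_{g_0\le c},
$$
keeps a term in which $g_\tau$ is coupled to the event $\{\sup f>c\}$. This forces you into the bootstrap (Hölder plus algebraic resolution) and is the source of all subsequent trouble. The paper avoids this coupling entirely: after reducing to $T=\infty$ and $g$ nondecreasing (replace $g_t$ by $\max_{s\le t}g_s$), it introduces \emph{both} hitting times $\theta_f=\inf\{t:f_t\ge c\}$ and $\theta_g=\inf\{t:g_t\ge c\}$, applies \eqref{7.19.3} at the \emph{triple} stopping time $\tau\wedge\theta_g\wedge\theta_f$, and then bounds
$g_{\tau\wedge\theta_g\wedge\theta_f}\le g_{\tau\wedge\theta_g}=g_\tau\wedge g_{\theta_g}\le g_\tau\wedge c$,
which contains no trace of $f$. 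The resulting tail estimate $P(\sup_{t\le\tau}f_t>c)\le P(g_\tau\ge c)+c^{-1}E(c\wedge g_\tau)$ integrates directly (substitute $c\to c^{1/\gamma}$, integrate $dc$ on $(0,\infty)$) to give precisely $\tfrac{2-\gamma}{1-\gamma}E\sup_{t\le\tau}g_t^\gamma$, with no bootstrap and no finiteness caveats.

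Second, the Hölder step is incorrect. You claim
$E[G F^{\gamma-1}]\le (EG^\gamma)^{1/\gamma}(EF^\gamma)^{(\gamma-1)/\gamma}$
``by Hölder's inequality with exponents $1/\gamma$ and $1/(1-\gamma)$.'' But $(\gamma-1)/\gamma<0$, so this is not Hölder; the inequality in this form is in fact false. Take $G\equiv1$, $\gamma=1/2$, and $F$ taking the values $1$ and $4$ each with probability $1/2$: the left-hand side is $E F^{-1/2}=3/4$, while the right-hand side is $(EF^{1/2})^{-1}=2/3<3/4$. So even granting the tail bound, the proposed resolution of the implicit inequality for $E F^\gamma$ does not go through.

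Third, the layer-cake identity $E F^\gamma{\bf 1}_{g_0\le c}=\gamma\int_0^\infty c^{\gamma-1}P(F>c,\ g_0\le c)\,dc$ confuses the fixed level $c$ appearing in the event $\{g_0\le c\}$ with the running variable of integration. If all occurrences of $c$ run, the integral equals $E\big[(F^\gamma-g_0^\gamma)^+\big]$, not $E F^\gamma{\bf 1}_{g_0\le c}$; if only the ``threshold'' $c$ runs, you cannot use the tail bound, which ties both roles of $c$ together. The paper sidesteps this by simply dropping the indicator ${\bf 1}_{g_0\le c}$ once it reaches $E\,{\bf 1}_{g_0\le c}(g_\tau\wedge c)\le E(g_\tau\wedge c)$, before integrating.

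In short, the fix is to follow the paper's use of the product stopping time $\tau\wedge\theta_g\wedge\theta_f$ after making $g$ nondecreasing; the bound $g$ at that time by $g_\tau\wedge c$ is the key decoupling, and it makes the rest elementary.
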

{\em Proof\/}. This lemma is proved in 
\cite{GK2003} when both processes $f$ and $g$ are continuous. 
A word by word repetition of the proof in \cite{GK2003} extends it 
to the case when $f$ to be cadlag. For the convenience 
of the reader we present the proof below. 
By replacing $f_{t}$ and $g_{t}$
with $f_{t\wedge T}$ and $g_{t\wedge T}$,
respectively, we see that we may assume that $T=\infty$.
Then we replace $g_{t}$ with $\max_{s\leq t}g_{s}$
and see that without losing generality
we may assume that $g_{t}$ is nondecreasing.
In that case
fix a constant $c>0$ and let $\theta_{f}=\inf\{t\geq0:
f_{t}\geq c\}$, $\theta_{g}=\inf\{t\geq0:
g_{t}\geq c\}$. Then
$$
P(\sup_{t\leq\tau}f_{t}>c )\leq
      P(\theta_{f}\leq\tau )
\leq  P(\theta_{g}\leq\tau )
+P(\theta_{f}\leq\tau\wedge\theta_{g},\theta_{g}>\tau  )
$$
$$
\leq P(g_{\tau}\geq c )+P(g_{0}\leq c,
f_{\tau\wedge\theta_{g}\wedge\theta_{f}}\geq c )
\leq P(g_{\tau}\geq c )+\frac{1}{c}\,
E I_{g_{0}\leq c}f_{\tau\wedge\theta_{g}\wedge\theta_{f}} .
$$
In the light of (\ref{7.19.3})
we replace the  expectation with
$$
EI_{g_{0}\leq c} g_{\tau\wedge\theta_{g}\wedge\theta_{f}}
\leq E I_{g_{0}\leq c}g_{\tau\wedge\theta_{g} }
= E I_{g_{0}\leq c}(g_{\tau}\wedge g_{\theta_{g}})
$$
$$
\leq E I_{g_{0}\leq c}(g_{\tau}\wedge c) \leq
E(g_{\tau}\wedge c).
$$
Hence
$$
P(\sup_{t\leq\tau}f_{t}>c )
\leq P(g_{\tau}\geq c )+\frac{1}{c}\,
E(c\wedge g_{\tau })
$$
Now it only remains to substitute $c^{1/\gamma}$
in place of $c$ and integrate with respect to $c$
over $(0,\infty)$. The lemma is proved.

Finally we present a slight modification of Lemma 5.3 from \cite{DGW} which 
we will use in proving regularity in time of solutions 
to \eqref{eq1}-\eqref{ini1}.
\begin{lemma}                                                                   \label{lemma w}
Let $V$ be a reflexive Banach space, 
embedded continuously and densely into a Banach space $U$.  
Let $f$ be a $U$-valued weakly cadlag function on $[0,T]$ such that 
the weak limit in $U$ at $T$ from the left is $f(T)$. 
Assume 
there is a dense subset $S$ of $[0,T]$ such that $f(s)\in V$ for $s\in S$ 
and $\sup_{s\in S}|f(s)|_V<\infty$. 
Then $f$ is a $V$-valued function, which is cadlag 
in the weak topology of $V$, and 
hence $\sup_{s\in[0,T]}|f(s)|_V=\sup_{s\in S}|f(s)|_V$.
\end{lemma}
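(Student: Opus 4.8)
The plan is to use the reflexivity of $V$ to convert the uniform $V$-bound on the dense set $S$ into weak-$V$ subsequential compactness, and to transfer limits from the weak topology of $U$ (in which $f$ is controlled by hypothesis) into the weak topology of $V$. Throughout identify $V$ with a linear subspace of $U$ via the embedding; then the restriction map carries $U^{*}$ into $V^{*}$, and since $U^{*}$ separates the points of $U$, the image $U^{*}|_{V}$ separates the points of $V$. Put $R:=\sup_{s\in S}|f(s)|_{V}<\infty$. Recall also the standard facts that a bounded sequence in a reflexive space has a weakly convergent subsequence (Kakutani together with Eberlein--\v{S}mulian), that the norm is weakly lower semicontinuous, and that a bounded sequence in a reflexive space all of whose weakly convergent subsequences share the same limit converges weakly to that limit.

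First I would show that $f(t)\in V$ with $|f(t)|_{V}\le R$ for every $t\in[0,T]$. Fix $t$; if $t<T$ pick $s_{n}\in S$ with $s_{n}\downarrow t$ (possible since $S$ is dense in $[0,T]$), so $f(s_{n})\to f(t)$ weakly in $U$ by weak right continuity; if $t=T$ pick $s_{n}\in S$ with $s_{n}\uparrow T$, so $f(s_{n})\to f(T)$ weakly in $U$ by the assumed left-limit property at $T$. In either case $(f(s_{n}))$ lies in the $V$-ball of radius $R$, so some subsequence $f(s_{n_{k}})$ converges weakly in $V$ to a point $g\in V$ with $|g|_{V}\le R$. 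For each $\phi\in U^{*}$, weak convergence in $V$ tested against $\phi|_{V}$ gives $\langle\phi,f(s_{n_{k}})\rangle\to\langle\phi,g\rangle$, while weak convergence in $U$ gives $\langle\phi,f(s_{n_{k}})\rangle\to\langle\phi,f(t)\rangle$; hence $\langle\phi,g\rangle=\langle\phi,f(t)\rangle$ for all $\phi\in U^{*}$, and since $U^{*}$ separates points of $U$ we get $g=f(t)$. Thus $f$ is $V$-valued with $\sup_{t\in[0,T]}|f(t)|_{V}\le R$, and the trivial reverse inequality yields $\sup_{s\in[0,T]}|f(s)|_{V}=\sup_{s\in S}|f(s)|_{V}$.

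Next I would establish weak-$V$ cadlag regularity, now knowing $f$ takes values in $V$ and is bounded by $R$. For right continuity at $t\in[0,T)$: given an arbitrary sequence $s_{n}\downarrow t$, the bounded sequence $(f(s_{n}))\subset V$ has, by reflexivity, weakly convergent subsequences, and the argument of the previous paragraph (testing against $U^{*}$ and using weak right continuity of $f$ in $U$) forces every such subsequential limit to equal $f(t)$; hence $f(s_{n})\to f(t)$ weakly in $V$. For left limits at $t\in(0,T]$: for any $s_{n}\uparrow t$, since $f$ is weakly cadlag in $U$ the scalars $\langle\phi,f(s_{n})\rangle$ converge for every $\phi\in U^{*}$ to $\langle\phi,f(t-)\rangle$, where $f(t-)\in U$ is the left limit and does not depend on the chosen sequence; applying the same reflexivity/separation argument to the bounded sequence $(f(s_{n}))\subset V$ shows $f(t-)\in V$ and $f(s_{n})\to f(t-)$ weakly in $V$, again independently of $(s_{n})$. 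Therefore $f$ is cadlag in the weak topology of $V$.

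The one genuinely delicate point is the passage between the two weak topologies: we only ever control $f$ through functionals in $U^{*}$, whereas the conclusions are stated in the weak topology of $V$, whose dual $V^{*}$ is in general strictly larger than $U^{*}|_{V}$. This gap is closed exactly by the combination of reflexivity of $V$ (to extract weak-$V$ limits of the bounded sequences at hand) and the fact that $U^{*}|_{V}$ still separates the points of $V$ (to identify those limits uniquely); no further input is needed, and because we only ever handle one bounded sequence at a time, the sequential compactness furnished by Eberlein--\v{S}mulian is all that is required.
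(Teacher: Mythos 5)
Your proof is correct, and its first half (showing $f(t)\in V$ for each $t$ by extracting a weak-$V$-convergent subsequence and identifying the limit via $U^*$-functionals) is essentially the paper's argument. The two proofs diverge at the key step of upgrading weak-$U$ cadlag regularity to weak-$V$ cadlag regularity. The paper proves this by first establishing that the image of $U^*$ under restriction is \emph{norm-dense} in $V^*$ (this uses reflexivity of $V$ together with the bipolar/Hahn--Banach argument), and then given $\phi\in V^*$ approximating it by some $\phi_\varepsilon\in U^*$ and running an $\varepsilon/3$-estimate using the already-established uniform $V$-bound. You instead avoid the density-of-$U^*$-in-$V^*$ step entirely: you reuse the Eberlein--\v{S}mulian extraction argument at \emph{every} $t$, and appeal to the fact that a bounded sequence in a reflexive space all of whose weakly convergent subsequences share the same limit must converge weakly to that limit (itself a quick consequence of Eberlein--\v{S}mulian). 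Both routes rely on reflexivity of $V$ in an essential way, but your version is more uniform---one compactness mechanism applied twice---whereas the paper introduces a second, conceptually distinct duality argument. The trade-off is that your argument leans on a ``subsequence principle'' in a non-metrizable topology; it is valid precisely because weak sequential compactness (not merely compactness) holds on bounded sets of reflexive spaces, a point you correctly isolate at the end.

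One small thing worth making explicit: when you invoke the subsequence principle you should note that by Eberlein--\v{S}mulian every bounded sequence in $V$ \emph{does} possess a weakly convergent subsequence, so the hypothesis of your recalled fact is never vacuous; otherwise the statement as you phrased it would be false (a sequence with no weakly convergent subsequence trivially satisfies the premise). You do cite Eberlein--\v{S}mulian in the surrounding text, so this is a phrasing issue rather than a mathematical gap.
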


\begin{proof}
Since $S$ is dense in $[0,T]$, for a given $t\in[0,T)$ there is a sequence 
$\{t_n\}_{n=1}^{\infty}$ with elements in $S$ such that $t_n\downarrow t$. 
Due to $\sup_{n\in\N}|f(t_n)|_V<\infty$ and the reflexivity of $V$ there is  a subsequence $\{t_{n_k}\}$ 
such that $f(t_{n_k})$ converges weakly in $V$ to some element $v\in V$. 
Since $f$ is weakly cadlag in $U$, for every continuous linear functional $\varphi$ over $U$ 
we have $\lim_{k\to\infty}\varphi(f(t_{n_k}))=\varphi({f}(t))$. 
Since the restriction of $\varphi$ in $V$ is a continuous 
functional over $V$ we have $\lim_{k\to\infty}\varphi(f(t_{n_k}))=\varphi(v)$. 
Hence $f(t)=v$, which proves that $f$ is a $V$-valued function over $[0,T)$. Moreover,   
by taking into account that 
$$
|f(t)|_V=|v|_V\leq\liminf_{k\to\infty}|f(t_{n_k})|_V
\leq \sup_{t\in S}|f(t)|_V<\infty,
$$ we obtain $K:=\sup_{t\in[0,T)}|f(s)|_V<\infty$.
Let $\phi$ be a continuous linear functional over $V$. Due to the reflexivity of $V$, the dual $U^*$ 
of the space $U$ is densely embedded into $V^*$, the dual of $V$. 
Thus for $\phi\in V^{\ast}$ and $\varepsilon>0$ there is 
$\phi_\varepsilon\in U^*$ such that $|\phi-\phi_\varepsilon|_{V^*}\le \varepsilon$. Hence 
for arbitrary sequence $t_n\downarrow t$, $t_n\in[0,T]$ we have 
$$
|\phi(f(t))-\phi(f(t_n))|\leq |\phi_\varepsilon(f(t)-f(t_n))|+|(\phi-\phi_\varepsilon)(f(t)-f(t_n))|
$$
$$
\leq|\phi_\varepsilon(f(t)-f(t_n))|+\varepsilon|f(t)-f(t_n)|_V
\leq |\phi_\varepsilon(f(t)-f(t_n))|+2\varepsilon K.
$$
Letting here $n\to\infty$ and then $\varepsilon\to0$, we get
$$
\limsup_{n\to\infty}|\phi(f(t))-\phi(f(t_n)) |\leq0,  
$$
which proves that $f$ is right-continuous in the weak topology in $V$. 
We can prove in the same way that at each $t\in[0,T]$ the function $f$ 
has weak limit in $V$ from the left at each $t\in(0,T]$, which finishes the proof 
of the lemma. 
\end{proof}

\section{Some results on interpolation spaces}                              \label{section interpolation}

A pair of complex Banach spaces $A_0$ and $A_1$, which are 
continuously embedded into a Hausdorff  topological vector space $\cH$, is called an interpolation 
couple, and 
$A_{\theta}=[A_0,A_1]_{\theta}$ denotes the complex interpolation space   
between $A_0$ and $A_1$ with parameter $\theta\in(0,1)$. For an interpolation couple 
$A_0$ and $A_1$ the notations $A_0\cap A_1$ and $A_0+A_1$ is used for the subspaces 
$$
A_0\cap A_1=\{v\in\cH:v\in A_0\,\text{and}\,v\in A_1\}, 
\quad A_0+A_1=\{v\in\cH: v=v_1+v_2,\,\, v_i\in A_i\}
$$
equipped with the norms  $|v|_{A_0\cap A_1}=\max(|v|_{A_0},|v|_{A_1})$ and 
$$
|v|_{A_0+A_1}:=\inf\{|v_0|_{A_0}+|v_1|_{A_1}
:v=v_0+v_1,v_0\in A_0, \,v_1\in A_1\},  
$$
respectively. 
Then the following theorem lists some well-known 
facts about complex interpolation, see e.g., 1.9.3, 1.18.4 and 2.4.2 in \cite{T}
and 5.6.9 in \cite{HNVW}.

\begin{theorem}                                             \label{theorem i1}
\begin{enumerate}[(i)]
\item 
If $A_0,A_1$ and $B_0,B_1$ are two interpolation couples and 
$S:A_0+A_1\to B_0+B_1$ is a linear operator such that 
its restriction onto $A_i$ is a continuous operator into $B_i$ 
with operator norm $C_i$ 
for $i=0,1$, then its restriction onto $A_{\theta}=[A_0,A_1]_{\theta}$ 
is a continuous operator into 
$B_{\theta}=[B_0,B_1]_{\theta}$ with operator norm 
$C_0^{1-\theta}C_1^{\theta}$ for every $\theta\in(0,1)$.  
\item 
For a $\sigma$-finite measure space $\mathfrak M$ and 
an interpolation couple 
of separable Banach spaces $A_0$, $A_1$ we have 
$$
[L_{p_0}(\mathfrak M,A_0),L_{p_1}(\mathfrak M,A_1)]_{\theta
}=L_{p}(\mathfrak M,[A_0,A_1]_{\theta}),
$$ 
for every $p_0,p_1\in[1,\infty)$, $\theta\in(0,1)$, 
where $1/p=(1-\theta)/p_0+\theta/p_1$.
\item Let $H^m_p$ denote the Bessel potential spaces of complex-valued functions.  
Then 
for $m_0,m_1\in\bR$ and  $1<p_0,p_1<\infty$ 
$$
[H^{m_0}_{p_0},H^{m_1}_{p_1}]_{\theta}=H^m_p, 
$$
where $m=(1-\theta)m_0+\theta m_1$, 
and $1/p=(1-\theta)/p_0+\theta/p_1$. Moreover, for integers $m$ one has 
$H^m_p=W^m_p$ with equivalent norms. 
\item For a UMD Banach space $V$, denote by $H^m_p(V)$ 
the Bessel potential spaces of $V$-valued functions. 
Then for $1<p<\infty$ and $m_0, m_1\in \bR$
$$
[H^{m_0}_p(V),H^{m_1}_p(V)]_\theta=H^m_p(V)
$$
for every $\theta\in(0,1)$, where $m=(1-\theta)m_0+\theta m_1$.
\item For $\theta\in[0,1]$ there is a constant 
$c_{\theta}$ such that 
$$
|v|_{A_{\theta}}\leq c_{\theta}|v|_{A_0}^{1-\theta}|v|^{\theta}_{A_1}
$$
for all $v\in A_0\cap A_1$. 
\end{enumerate}
\end{theorem}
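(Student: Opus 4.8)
The plan is to obtain each of the five items from the definition of the complex interpolation functor together with classical facts, since all of them are standard; I shall indicate only the mechanism and the genuinely substantial inputs. Throughout, recall that for an interpolation couple $(A_0,A_1)$ the norm $|v|_{A_\theta}$ is the infimum, over all $(A_0+A_1)$-valued functions $f$ that are bounded and continuous on the closed strip $\{0\le\Real z\le1\}$, analytic in its interior, and satisfy $f(\theta)=v$, of the quantity $\max_{j=0,1}\sup_{t\in\bR}|f(j+it)|_{A_j}$.

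Items (i) and (v) follow at once from this description via the Hadamard three-lines theorem. For (i), fix $\varepsilon>0$ and choose $f$ admissible for $v\in A_\theta$ with $\max_{j}\sup_t|f(j+it)|_{A_j}\le|v|_{A_\theta}+\varepsilon$; then $F(z):=C_0^{z-\theta}C_1^{\theta-z}\,Sf(z)$ is admissible for $(B_0,B_1)$, satisfies $F(\theta)=Sv$, and on the edge $\Real z=j$ has $B_j$-norm at most $C_0^{1-\theta}C_1^{\theta}(|v|_{A_\theta}+\varepsilon)$, so that $|Sv|_{B_\theta}\le C_0^{1-\theta}C_1^{\theta}|v|_{A_\theta}$ after letting $\varepsilon\downarrow0$. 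For (v), fix $v\in A_0\cap A_1$ with $v\ne0$ and $\delta>0$; the explicit function $f(z):=e^{\delta(z^2-\theta^2)}\bigl(|v|_{A_0}/|v|_{A_1}\bigr)^{z-\theta}v$ is admissible for $(A_0,A_1)$, has $f(\theta)=v$, and on $\Real z=j$ has $A_j$-norm at most $e^{\delta}|v|_{A_0}^{1-\theta}|v|_{A_1}^{\theta}$; letting $\delta\downarrow0$ gives the inequality, with $c_\theta=1$ in the standard normalization.

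Item (ii) is the Calder\'on description of the complex interpolation space of an $L_p$-couple. The inclusion $[L_{p_0}(\mathfrak M,A_0),L_{p_1}(\mathfrak M,A_1)]_\theta\hookrightarrow L_p(\mathfrak M,[A_0,A_1]_\theta)$ follows by evaluating an admissible function pointwise a.e.\ and using (v) to place the value in the right space; the reverse inclusion calls for a measurable gluing that assembles pointwise admissible functions into one admissible $L_p(\mathfrak M,A_0+A_1)$-valued function, and this is where separability of $A_0,A_1$ enters. Items (iii) and (iv) are then reduced to (ii) by means of the Bessel potential operators: $(1-\Delta)^{s/2}$ is an isomorphism of $H^m_p$ onto $H^{m-s}_p$ for every $s,m\in\bR$, $1<p<\infty$, and, applied along the analytic family $z\mapsto(1-\Delta)^{-((1-z)m_0+zm_1)/2}$ that maps $L_{p_j}$ isomorphically onto $H^{m_j}_{p_j}$ at $z=j$, a Stein-type interpolation argument identifies $[H^{m_0}_{p_0},H^{m_1}_{p_1}]_\theta$ with the $(1-\Delta)^{-m/2}$-image of $[L_{p_0},L_{p_1}]_\theta=L_p$, i.e.\ with $H^m_p$. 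For (iv) the same scheme works once $(1-\Delta)^{s/2}$ is known to be bounded on $H^m_p(V)$ — which is exactly the content of the UMD hypothesis, through the operator-valued Mikhlin multiplier theorem. The equality $H^m_p=W^m_p$ for integer $m$ is the Calder\'on--Zygmund estimate for the Bessel kernel.

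I do not expect a single new obstacle here: the genuinely deep ingredients are Calder\'on's theorem behind (ii) and the (operator-valued) Fourier multiplier theory underlying the isomorphism property used in (iii)--(iv); all the remaining steps are routine manipulations of admissible functions. Accordingly the task of the proof is to assemble and specialize the classical statements, and I would simply cite 1.9.3, 1.18.4 and 2.4.2 of \cite{T} and 5.6.9 of \cite{HNVW}, reproducing the short three-lines arguments for (i) and (v) as above.
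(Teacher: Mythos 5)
Your proposal is correct and matches the paper's treatment, which simply cites the same standard references (Triebel 1.9.3, 1.18.4, 2.4.2 and Hyt\"onen--van Neerven--Veraar--Weis 5.6.9) without giving an independent proof. The three-lines arguments you supply for (i) and (v) and the reduction of (iii)--(iv) to (ii) via Bessel potential lifting are exactly the standard mechanisms behind those citations.
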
 

We will also use the following theorem on the interpolation spaces between 
the interpolation couple 
$\cL_q\cap\cL_{p_0}$ and $\cL_q\cap\cL_{p_1}$,  
for $1\leq p_0\leq p_1$ and a fixed $q\notin(p_0,p_1)$, where
the notation $\cL_p$ means the $L_p$-space of real functions on a measure space 
$(Z,\cZ,\mu)$ with a $\sigma$-finite measure $\mu$ on a $\sigma$-algebra $\cZ$.  

\begin{theorem}                                                          \label{theorem Lpq interpolation}
For any 
$1\leq p_0\leq p_1\leq\infty$, $q\geq 1$ and $q\notin(p_0,p_1)$
we have 
$$
[\cL_q\cap\cL_{p_0},\cL_q\cap\cL_{p_1}]_{\theta}=\cL_q\cap\cL_{p}
$$ 
with equivalent norms for each $\theta\in(0,1)$, where 
$p$ is defined by $1/p=(1-\theta)/p_0+\theta/p_1$. 
\end{theorem}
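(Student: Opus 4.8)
The plan is to prove the asserted equality by establishing the two continuous inclusions
$$
[\cL_q\cap\cL_{p_0},\cL_q\cap\cL_{p_1}]_{\theta}\hookrightarrow \cL_q\cap\cL_p
\qquad\text{and}\qquad
\cL_q\cap\cL_p\hookrightarrow[\cL_q\cap\cL_{p_0},\cL_q\cap\cL_{p_1}]_{\theta}
$$
separately (passing to complexifications so that the complex method applies). The first inclusion is a soft consequence of the interpolation property of bounded linear operators; the second requires constructing, for a given $v\in\cL_q\cap\cL_p$, an analytic function on the strip $\bar S=\{z\in\C:0\le\Re z\le1\}$ realising $v$ at $z=\theta$ in the complex method, and it is here that the hypothesis $q\notin(p_0,p_1)$ is used decisively. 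I may assume $p_0<\infty$, since otherwise $p_0=p_1=p=\infty$ and the statement is trivial; then $p\le p_0/(1-\theta)<\infty$. By homogeneity I may also assume $\max(|v|_{\cL_q},|v|_{\cL_p})\le1$.

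For the first inclusion, note that for $i=0,1$ the identity is a contraction from $\cL_q\cap\cL_{p_i}$ into $\cL_q$ and from $\cL_q\cap\cL_{p_i}$ into $\cL_{p_i}$. Hence by Theorem \ref{theorem i1}(i) the identity is a contraction from $[\cL_q\cap\cL_{p_0},\cL_q\cap\cL_{p_1}]_{\theta}$ into $[\cL_q,\cL_q]_{\theta}=\cL_q$ and into $[\cL_{p_0},\cL_{p_1}]_{\theta}$, and the latter space equals $\cL_p$ by Theorem \ref{theorem i1}(ii) applied with the measure space $(Z,\cZ,\mu)$ and $A_0=A_1=\C$ (the case $p_1=\infty$ being covered by Calderón's classical interpolation theorem for $L^p$ spaces). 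Therefore $|v|_{\cL_q\cap\cL_p}=\max(|v|_{\cL_q},|v|_{\cL_p})\le|v|_{[\cL_q\cap\cL_{p_0},\cL_q\cap\cL_{p_1}]_{\theta}}$.

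For the second inclusion put $\alpha(z)=(1-z)/p_0+z/p_1$, so that $p\,\alpha(\theta)=1$, and set $\mathrm{sgn}\,v:=v/|v|$ on $\{v\neq0\}$ and $\mathrm{sgn}\,v:=0$ on $\{v=0\}$. The naive Calderón function $z\mapsto|v|^{p\alpha(z)}\,\mathrm{sgn}\,v$ does \emph{not} work here: on one of the lines $\Re z\in\{0,1\}$ its modulus is $|v|^{s}$ with $s$ outside $[\min(p,q),\max(p,q)]$, and $\cL_q\cap\cL_p$ does not embed into $\cL_{r}$ for such $r$. The remedy is to truncate at level $1$ and place the $z$-dependence only on the appropriate part. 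If $q\le p_0$ (so $q\le p_0\le p\le p_1$) I set
$$
F(z):=(|v|\wedge1)\,(|v|\vee1)^{p\,\alpha(z)}\,\mathrm{sgn}\,v ,
$$
and if $q\ge p_1$ (so $p_0\le p\le p_1\le q$) I set
$$
F(z):=(|v|\wedge1)^{p\,\alpha(z)}\,(|v|\vee1)\,\mathrm{sgn}\,v ,
$$
where $a\wedge b=\min(a,b)$, $a\vee b=\max(a,b)$; in both cases $F(\theta)=(|v|\wedge1)(|v|\vee1)\,\mathrm{sgn}\,v=v$. For $\mu$-a.e.\ fixed $z$ the map $z\mapsto F(z)$ is entire, and one verifies in the routine way that $F$ is bounded and continuous on $\bar S$ with values in $(\cL_q\cap\cL_{p_0})+(\cL_q\cap\cL_{p_1})$, analytic in the interior, and $t\mapsto F(j+it)$ continuous into $\cL_q\cap\cL_{p_j}$; if the chosen definition of the interpolation norm requires decay as $|\Im z|\to\infty$, replace $F$ by $e^{\delta(z^2-\theta^2)}F$ and let $\delta\downarrow0$ at the end.

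The substantive step is then the bound $\sup_t|F(j+it)|_{\cL_q\cap\cL_{p_j}}\le N=N(p_0,p_1,q,\theta)$, and it succeeds precisely because $q\notin(p_0,p_1)$: on $\{|v|\le1\}$, raising $|v|$ to any power $\ge q$ gives a function controlled by $|v|_{\cL_q}$; on $\{|v|>1\}$, a set of finite $\mu$-measure (since $p<\infty$ and $v\in\cL_p$, so $\mu(|v|>1)\le|v|_{\cL_p}^p$), raising $|v|$ to any power $\le\max(p,q)$ gives a function controlled by $|v|_{\cL_p}$ and $|v|_{\cL_q}$. A direct inspection shows that for the function $F$ above all exponents occurring in $|F(j+it)|^{p_j}$ and in $|F(j+it)|^{q}$, over the two regions $\{|v|\le1\}$ and $\{|v|>1\}$, fall into exactly these two regimes; e.g.\ in the case $q\le p_0$ on $\Re z=0$ one has $|F(it)|=|v|$ on $\{|v|\le1\}$ and $|F(it)|=|v|^{p/p_0}$ on $\{|v|>1\}$, with relevant exponents $p_0,q\ge q$ on the first region and $p,\ pq/p_0\le p$ on the second (here $pq/p_0\le p$ because $q\le p_0$); the other line and the case $q\ge p_1$ are symmetric. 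Undoing the normalisation yields $|v|_{[\cL_q\cap\cL_{p_0},\cL_q\cap\cL_{p_1}]_{\theta}}\le N|v|_{\cL_q\cap\cL_p}$, which completes the proof. The main obstacle is thus not the easy inclusion but choosing the right analytic function for the hard one: the naive Calderón choice fails, and the correct device is truncation at level $1$ with the $z$-dependence assigned to the large-value part when $q\le p_0$ and to the small-value part when $q\ge p_1$.
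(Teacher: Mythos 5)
Your proof is correct, but it takes a genuinely different route from the paper. The paper follows Riechwald's retraction--coretraction scheme: using Calder\'on's substitution theorem (Theorem~1 of \cite{C1966}) it produces, for each $f\in\cL_1+\cL_\infty$, operators $S_1,S_2,T_1,T_2$, each simultaneously contractive on the full scale $L_p$, $p\in[1,\infty]$, with $f=T_1S_1f+T_2S_2f$, thereby transferring the problem to the model spaces $L_p(0,1)$ and $l_p$, where the $\cL_q$-intersection collapses (to $L_p(0,1)$ and to $l_q$ respectively); Theorem~\ref{theorem i1}(i) then finishes the argument. You instead give a direct, self-contained complex-interpolation proof in the style of Calder\'on's original computation of $[L_{p_0},L_{p_1}]_\theta$: the easy inclusion is the interpolation property, and for the hard inclusion you construct an explicit analytic function, truncating $v$ at level $1$ and placing the $z$-dependence on the large-value part when $q\le p_0$ and on the small-value part when $q\ge p_1$. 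This is a clean way to see exactly where $q\notin(p_0,p_1)$ is used, and it avoids both Calder\'on's rearrangement theorem and the excursion through $L_p(0,1)$ and $l_p$, at the cost of more hands-on exponent bookkeeping. One small imprecision in your summary: the correct criterion on $\{|v|\le1\}$ is that the exponent be $\ge\min(p,q)$ (controlled by $|v|_{\cL_{\min(p,q)}}$), not $\ge q$; your phrasing is right only in the case $q\le p_0$ you worked out in detail, and in the case $q\ge p_1$ one must swap the roles of $p$ and $q$, as the ``symmetric'' verification indeed does. Your actual exponent checks are correct, so this does not affect the validity of the argument.
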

This theorem is proved in \cite{R2012} only in the special case when $Z$ is a domain in $\bR^d$, 
$\cZ$ is the $\sigma$-algebra of the Borel subsets of $\bR^d$, $\mu$ is the Lebesgue measure 
on $\bR^d$ and $q=2\leq p_0\leq p_1$, but    
the same proof works also in our situation. 
For the convenience of the reader we present here the very nice argument from \cite{R2012} 
in our more general setting. The key role is played by the following lemma, 
which is an adaptation 
of Theorem 4 from \cite{R2012}. The notation $L_p(a,b)$ means the $L_p$ space of 
Borel-measurable real-functions on an interval $(a,b)$ with respect to the Lebesgue 
measure on $(a,b)$ for $-\infty\leq a<b\leq\infty$. 

\begin{lemma}                                                                 \label{lemma Riechwald}
Let $f\in\cL_1+\cL_{\infty}$ be a fixed function. 
Then there are bounded linear operators $S_1$ 
and $S_2$ mapping $\cL_1+\cL_{\infty}$ to $L_1(0,1)$ 
and $l_{\infty}$, respectively,  and 
there are also bounded linear operators $T_1$ and $T_2$ 
mapping $L_1(0,1)$ and $l_{\infty}$, 
respectively into $\cL_1+\cL_{\infty}$, such that 
\begin{equation}                                                               \label{decomposition}
f=T_1S_1f+T_2S_2f, 
\end{equation}
and for any $p\in[1,\infty]$ 
\begin{equation}                                                                                 \label{ST}
|S_1u|_{L_p(0,1)}\leq |u|_{\cL_p}, \quad |S_2u|_{l_p}\leq |u|_{\cL_p}, 
\quad |T_1v|_{\cL_p}\leq |v|_{L_p(0,1)},\quad |T_2w|_{\cL_p}\leq |w|_{l_p}
\end{equation}
for all $u\in \cL_p$, $v\in L_p(0,1)$ and $w\in l_p$. 
\end{lemma}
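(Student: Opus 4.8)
The plan is to construct the four operators explicitly out of the fixed function $f$. First recall the elementary description of $\cL_1+\cL_\infty$: a $\cZ$-measurable $f$ lies in $\cL_1+\cL_\infty$ if and only if $\int_{\{|f|>t\}}|f|\,d\mu<\infty$ for some $t>0$; since all the estimates in \eqref{ST} and the identity \eqref{decomposition} are invariant under replacing $f$ by a nonzero scalar multiple, I may assume $\int_{\{|f|>1\}}|f|\,d\mu<\infty$, so that in particular $\mu(\{|f|>1\})<\infty$. Split $Z$ accordingly into the \emph{integrable part} $P:=\{|f|>1\}$ (of finite $\mu$-measure, with $f$ integrable on it) and the \emph{bounded part} $Q:=\{|f|\le1\}$ (where $|f|\le1$, but where $\mu$ may be infinite). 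I will build $S_1,T_1$ (through $L_1(0,1)$) so that $T_1S_1f=f\mathbf{1}_P$ and $S_2,T_2$ (through $l_\infty$) so that $T_2S_2f=f\mathbf{1}_Q$; then \eqref{decomposition} is just $f=f\mathbf{1}_P+f\mathbf{1}_Q$.

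For the $L_1(0,1)$ part I would transport $P$ onto $(0,1)$ by a measure-preserving map \emph{without any rescaling}. Since $\mu(P)<\infty$, partition $P$ into finitely many measurable sets $P_1,\dots,P_N$ each of $\mu$-measure $\le1$ (any atoms of $\mu$ of mass $>1$ inside $P$ are set aside and handled together with the $l_\infty$ part below), and fix measure-preserving bijections $\phi_j\colon P_j\to J_j$ onto pairwise disjoint subintervals $J_j\subset(0,1)$ with Lebesgue measure $|J_j|=\mu(P_j)$. Let $S_1u$ be obtained by moving $u|_{P_j}$ to $J_j$ via $\phi_j^{-1}$ and extending by zero, and $T_1v$ by moving $v|_{J_j}$ back to $P_j$ via $\phi_j$ and extending by zero off $P$. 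Because the transports are mass preserving, one gets, for every $p\in[1,\infty]$ at once, $|S_1u|_{L_p(0,1)}^p=\int_P|u|^p\,d\mu\le|u|_{\cL_p}^p$ and $|T_1v|_{\cL_p}^p=\sum_j\int_{J_j}|v|^p\,dt\le|v|_{L_p(0,1)}^p$, together with $T_1S_1f=f\mathbf{1}_P$.

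The $l_\infty$ part is the heart of the matter. Using the $\sigma$-finiteness of $\mu$, partition $Q$ (together with the leftover atoms of $P$), refining if necessary by the dyadic level sets of $|f|$ and by the sign of $f$, into countably many measurable pieces; on these pieces choose disjointly supported functions $\Phi_k\in\cL_1\cap\cL_\infty$ with $|\Phi_k|_{\cL_\infty}\le1$, $|\Phi_k|_{\cL_1}\le1$ and $\sum_k|\Phi_k|\le1$ $\mu$-a.e., and disjointly supported kernels $\psi_k\in\cL_1\cap\cL_\infty$ with the same bounds, and set $(S_2u)_k:=\int_Z u\psi_k\,d\mu$ and $T_2w:=\sum_k w_k\Phi_k+\mathrm{LIM}(w)\,g$, where $g\in\cL_\infty$ with $|g|_{\cL_\infty}\le1$ is supported away from all the $\Phi_k$, and $\mathrm{LIM}$ is a fixed Banach limit on $l_\infty$ (needed only to reproduce behaviour of $f$ that is spread uniformly over an infinite-measure set; it is annihilated on every $l_p$ with $p<\infty$ and has norm $1$ on $l_\infty$). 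Here a heavy piece -- an atom of mass $>1$, or a level set whose oscillation is too large to be caught by one coordinate -- may be assigned several coordinates $k$. The $\Phi_k,\psi_k,g$ are then chosen so that $T_2S_2f=f\mathbf{1}_Q$, which amounts to solving, on each piece, a one- or few-dimensional linear problem for the values of the $\Phi_k$ and of the $\psi_k$ given the values of $f$ there. The bounds $|S_2u|_{l_p}\le|u|_{\cL_p}$ and $|T_2w|_{\cL_p}\le|w|_{l_p}$ then hold for all $p\in[1,\infty]$ simultaneously: for $p<\infty$ the Banach-limit term drops out and the inequalities follow from Hölder's inequality on each piece (using $|\psi_k|_{\cL_\infty}\le1$, $|\psi_k|_{\cL_1}\le1$, $\sum_k|\psi_k|\le1$, and the analogues for $\Phi_k$), while for $p=\infty$ they hold because functions with pairwise disjoint supports have $\cL_\infty$-norm equal to the supremum of the individual norms.

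Combining the two constructions gives $f=f\mathbf{1}_P+f\mathbf{1}_Q=T_1S_1f+T_2S_2f$, i.e.\ \eqref{decomposition}, and the estimates collected above are exactly \eqref{ST}. The real difficulty, and what forces the somewhat delicate $f$-dependent bookkeeping, is the demand that \emph{all four} estimates in \eqref{ST} hold for \emph{every} $p\in[1,\infty]$ and in \emph{both} directions with constant $1$: a single retraction--coretraction pair cannot achieve this, since $TSf=f$ with $S,T$ contractive for all $p$ would force $|Sf|_{L_p(0,1)}=|f|_{\cL_p}$ for all $p$, i.e.\ the distribution function of $f$ would have to be realisable on $(0,1)$ (or on $\N$), which fails for a general $\sigma$-finite $\mu$. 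Hence two pairs are unavoidable, and on the $l_\infty$ side one must keep enough of the fine structure of $f$ on each piece to reconstruct it exactly while still keeping the $\cL_\infty$- and $\cL_1$-norms of the templates $\Phi_k$ and kernels $\psi_k$ bounded by $1$; these competing requirements are reconciled by the disjointness of the supports, by splitting heavy pieces over several coordinates, and, where needed, by the singular (Banach-limit) component of $T_2$.
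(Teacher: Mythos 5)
Your approach is genuinely different from the paper's. You attempt a direct, $f$-dependent construction of $S_1,S_2,T_1,T_2$, splitting $Z$ at the level set $P=\{|f|>1\}$, whereas the paper (following Riechwald) never constructs the operators by hand: it invokes Calder\'on's transfer theorem twice, using the decreasing rearrangement $f^\ast$ on $(0,\infty)$ as the pivot, together with the restriction/averaging maps $V_1,V_2$ and the extension maps $W_1,W_2$ on $(0,\infty)$.

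Unfortunately the $l_\infty$ half of your construction is a genuine gap, not an omitted detail, and I do not believe it can be filled in the form you describe. Trace through your own constraints. If the $\Phi_k$ are disjointly supported and $T_2S_2f=f\mathbf{1}_Q$, then on $\operatorname{supp}\Phi_k$ you are forced to take $\Phi_k=f/c_k$ with $c_k=\int_Z f\psi_k\,d\mu$. The requirements $|\Phi_k|_{\cL_1}\le1$, $|\Phi_k|_{\cL_\infty}\le1$ then force
\[
|c_k|\ \ge\ \max\bigl(|f|_{\cL_1(\operatorname{supp}\Phi_k)},\ |f|_{\cL_\infty(\operatorname{supp}\Phi_k)}\bigr),
\]
while $|\psi_k|_{\cL_1}\le1$, $|\psi_k|_{\cL_\infty}\le1$ give
\[
|c_k|\ \le\ \min\bigl(|f|_{\cL_1(\operatorname{supp}\psi_k)},\ |f|_{\cL_\infty(\operatorname{supp}\psi_k)}\bigr).
\]
If $\operatorname{supp}\Phi_k=\operatorname{supp}\psi_k$, these force $|f|_{\cL_1}$ and $|f|_{\cL_\infty}$ to agree on every piece, and even then one needs equality in H\"older's inequality, i.e.\ $\sup_{\text{piece}}|f|\le\int_0^1(f\mathbf{1}_{\text{piece}})^\ast$. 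Already for $f(z)=\min(1,1/|z|)$ on $\bR$, $Q=\bR$, no such partition exists: on the piece where the $L_1$- and $L_\infty$-masses balance, the maximum of $\int f\psi$ over admissible $\psi$ strictly undershoots $\sup|f|$, so $|\Phi_k|_{\cL_\infty}>1$. Assigning several coordinates to a ``heavy piece'' only refines the partition and does not change this, and the Banach-limit term helps only at $p=\infty$ and only where $f$ has constant modulus, which is not a general-purpose fix. The remaining possibility is to give $\Phi_k$ and $\psi_k$ offset supports so that the $\psi_k$-piece dominates the $\Phi_k$-piece in both $\cL_1$ and $\cL_\infty$ simultaneously; but arranging this consistently over a countable partition requires a monotone ordering of the pieces, which is exactly what the decreasing rearrangement provides in the paper's proof (there $(V_2u)_n=\int_{n-1}^n u$ while $W_2$ places the value on $[n,n+1)$, and the monotonicity of $f^\ast$ gives both $\int_{n-1}^n f^\ast\ge\sup_{[n,n+1)}f^\ast$ and $\int_{n-1}^n f^\ast\ge\int_n^{n+1}f^\ast$). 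You never supply such an ordering, so the assertion that the ``one- or few-dimensional linear problem'' is solvable on each piece with norm $1$ is unsupported. There is also a smaller issue in the $L_1(0,1)$ half: measure-preserving point bijections onto intervals require $\mu|_P$ to be non-atomic and the measure algebra to be standard; you flag the atoms but would still need an isomorphism theorem for measure algebras (e.g.\ Maharam) to make this rigorous, whereas Calder\'on's theorem handles this automatically at the level of sub-majorization.
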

\begin{proof}
Though the proof of this lemma is just a repetition, 
in a more general setting, of that of Theorem 4 
from \cite{R2012}, for the convenience of the reader 
we present the full argument here. The main tool in the proof 
is a theorem of Calderon, Theorem 1 from \cite{C1966}, 
which under a stronger condition reads as follows.  
Let $\cL_p(Z_i)$ denote  the $L_p$-space of real functions 
on a $\sigma$-finite measure space 
$(Z_i,\cZ_i,\mu_i)$ for $i=1,2$, and let $f_i\in \cL_1(Z_i)+\cL_{\infty}(\cZ_i)$ 
such that 
$f_1^{\ast}(t)\geq f_2^{\ast}(t)$ for $t\in[0,\infty)$, where 
$$
f_i^{\ast}(t)=\inf\{\lambda\geq 0: \mu_i(|f_i|>\lambda)\leq t)\}, \quad t\geq0
$$
is the non-increasing right continuous rearrangement of $f_i$. 
Then there is a bounded linear operator $L$ from 
$\cL_1(Z_1)+\cL_{\infty}(\cZ_1)$ into 
$\cL_1(Z_2)+\cL_{\infty}(\cZ_2)$ such that $Lf_1=f_2$, 
\begin{equation}                                                                                 \label{Calderon}
|Lu|_{\cL_1(Z_2)}\leq |u|_{\cL_1(Z_1)} \quad\text{and}
\quad |Lv|_{\cL_{\infty}(Z_2)}\leq |v|_{\cL_{\infty}(Z_1)} 
\end{equation}
for $u\in \cL_1(Z_1)$ and $v\in \cL_{\infty}(Z_1)$. 

Since $f^{\ast}=(f^{\ast})^{\ast}$, one can apply 
Calderon's theorem to $f_1:=f$ and $f_2:=f^{\ast}$,  
to get an operator $L$ such that $Lf=f^{\ast}$ 
and \eqref{Calderon} holds with $(Z_1,\cZ_1,\mu_1):=(Z,\cZ,\mu)$ and 
$(Z_2,\cZ_2,\mu_2):=((0,\infty),\cB(0,\infty),dt)$. 
Define the operators $V_1$ and $V_2$ from 
$L_1(0,\infty)+L_{\infty}(0,\infty)$ to $L_1(0,1)$ and to $l_{\infty}$, 
respectively by 
$$
V_1u=u_{|(0,1)}, \text{i.e., the restriction of $u$ onto $(0,1)$, \,\,$V_2u
=\left(\int_{n-1}^nu(t)\,dt\right)_{n=1}^{\infty}$}. 
$$
Define also the operators 
$$
W_1: L_1(0,1)\to L_1(0,\infty)
\quad
\text{and}
\quad W_2:l_{\infty}\to L_{\infty}(0,\infty)
$$
by 
$$
W_1v(t):=
\begin{cases}v(t), &\text{if } t\in(0,1)\\
0&\text{if }{t\geq1}\,
\end{cases}
\quad\text{and}\quad W_2a (t):=\begin{cases}0, &\text{if } t\in(0,1)\\
a_n&\text{if }t\in[n,n+1),\,n=1,2,...\,.
\end{cases}
$$
Then for $g:=W_1V_1Lf+W_2V_2Lf\in L_1(0,\infty)+L_{\infty}(0,\infty)$ 
one has $g=f^{\ast}$ on $(0,1)$, and 
$$
g(t)=\int_{n-1}^nf^{\ast}_s\,ds\geq f^{\ast}(n)\geq f^{\ast}(t)
\quad\text{for}\,\,t\in[n,n+1)
\text{for integers $n\geq1$}. 
$$
Thus $f^{\ast}\leq g^{\ast}$, and one can apply Calderon's theorem 
again to get a bounded 
linear operator 
$H:L_1(0,\infty)+L_{\infty}(0,\infty)\to\cL_1+\cL_{\infty}$ 
such that $f=Hg$, and 
$H$ is a bounded operator from $L_1(0,\infty)$ to $\cL_1$ and from 
$L_{\infty}(0,\infty)$ to $\cL_{\infty}$, with operator norms not larger than 1. 
Hence it is easy to check 
that $S_i:=V_iL$ and $T_i:=HW_i$, $i=1,2$ satisfy \eqref{decomposition} and \eqref{ST} 
for $p=1,\infty$, and hence for all $p\in[1,\infty]$  by the Riesz-Thorin theorem. 
\end{proof}
\begin{proof}[Proof of Theorem \ref{theorem Lpq interpolation}] Consider first 
the case $1\leq q\leq p_0\leq p_1$.  
Notice that for fixed 
$q\geq1$ and for any $r\in[q,\infty]$
we have 
$$
\tilde L_r(0,1):=L_q(0,1)\cap L_r(0,1)=L_r(0,1)\quad \text{and}\quad   
\tilde l_r:=l_q\cap l_r=l_q. 
$$
Use also the notation $\tilde\cL_r:=\cL_q\cap\cL_r$.  
For an $f\in\tilde\cL_p=\cL_q\cap\cL_p\subset\cL_1+\cL_{\infty}$ 
let $S_i$ and $T_i$ 
denote the operators 
from the previous lemma.  Then clearly,  
$$
S_1:\tilde\cL_p\to\tilde L_p(0,1)=L_p(0,1)
=[\tilde L_{p_0}(0,1),\tilde L_{p_1}(0,1)]_{\theta}, 
\quad 
S_2:\tilde\cL_p\to \tilde l_p=l_q=[\tilde l_{p_0}, \tilde l_{p_1}]_{\theta}
$$
and by interpolation,
$$
\quad T_1: [\tilde L_{p_0}(0,1),\tilde L_{p_1}(0,1)]_{\theta}\to[\tilde \cL_{p_0},\tilde\cL_{p_1}]_{\theta}, \quad 
T_2:[\tilde l_{p_0}, \tilde l_{p_1}]_{\theta}\to 
[\tilde\cL_{p_0},\tilde\cL_{p_1}]_{\theta} 
$$ 
are bounded operators with operator norms not greater than 1. 
Hence taking $V:=[\tilde \cL_{p_0},\tilde \cL_{p_1}]_{\theta}$ norm 
in both sides of equation \eqref{decomposition} we get 
$$
|f|_{V}\leq |T_1S_1f|_{V}
+|T_2S_2f|_{V}\leq 2|f|_{\tilde \cL_p}. 
$$
Let now $f\in [\tilde\cL_{p_0},\tilde\cL_{p_1}]_{\theta}\subset \cL_1+\cL_{\infty}$, 
and denote again by $S_i$ and $T_i$ for $i=1$ the linear operators corresponding to 
$f$ by the above lemma.  
Then clearly, 
$$
T_1:\tilde L_p(0,1)\to\tilde\cL_p, \quad\text{and}
\quad  
T_2: \tilde l_p\to \tilde\cL_p, 
$$
and by interpolation 
$$
S_1:[\tilde\cL_{p_0},\tilde\cL_{p_1}]_{\theta}\to [\tilde L_{p_0}(0,1),\tilde L_{p_1}(0,1)]_{\theta}
=L_p(0,1)=\tilde L_p(0,1)
$$
and
$$
S_2:[\tilde\cL_{p_0},\tilde\cL_{p_1}]_{\theta}
\to [\tilde l_{p_0},\tilde l_{p_1}]_{\theta}=l_q=\tilde l_p 
$$
are bounded operators with operator norm not greater than 1. Hence 
$$
|f|_{\tilde\cL_p}\leq |T_1S_1f|_{\tilde\cL_p}
+|T_2S_2f|_{\tilde\cL_p}\leq 2|f|_{V},
$$
which finishes the proof of the theorem
when $1\leq q \leq p_0\leq p_1$. The theorem in the case $1\leq p_0\leq p_1\leq q$ 
can be proved in the same way with obvious changes. 
\end{proof}
\begin{theorem}                                  \label{theorem H}
Let the pair of separable Banach spaces $A_0$ and $A_1$ be an interpolation couple, 
and for $\theta\in[0,1]$ let $H^m_p(A_{\theta})$ denote the 
Bessel potential spaces of $A_{\theta}$-valued 
distributions for $m\in(-\infty,\infty)$ and $p\in(1,\infty)$. Then for $p_0,p_1\in(1,\infty)$ and   
$\theta\in(0,1)$ 
\begin{equation}                                                                  \label{H}
[H_{p_0}^m(A_0),H_{p_1}^m(A_1)]_{\theta}=H^m_{p_{\theta}}(A_{\theta})
\end{equation}
for any $m\in(-\infty,\infty)$, where $p_{\theta}=((1-\theta)/p_0+\theta/p_1)^{-1}$. 
\end{theorem}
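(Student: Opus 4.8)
The plan is to reduce \eqref{H} to the $L_p$-identity $[L_{p_0}(A_0),L_{p_1}(A_1)]_{\theta}=L_{p_{\theta}}(A_{\theta})$ of Theorem \ref{theorem i1}(ii) by conjugating with the Bessel potential operator $\Lambda^m:=(1-\Delta)^{m/2}$. First I would fix the ambient Hausdorff space. Since $A_0,A_1$ form an interpolation couple they are continuously embedded in $A_0+A_1$, and I would take $\cH:=\cS'(\bR^d;A_0+A_1)$, the space of $(A_0+A_1)$-valued tempered distributions. Because $\Lambda^m$ is the Fourier multiplier with the scalar symbol $(1+|\xi|^2)^{m/2}$, which together with all its derivatives grows at most polynomially, $\Lambda^m$ is a continuous automorphism of $\cS'(\bR^d;B)$ for every Banach space $B$, with inverse $\Lambda^{-m}$; being a scalar multiplier it restricts compatibly to the $A_i$-valued classes (no UMD hypothesis is needed, since the point is only to shift the smoothness index). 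By the definition of the Bessel potential spaces recalled in the excerpt, $\Lambda^m$ is then an isometric isomorphism of $H^m_p(B)$ onto $L_p(B)=H^0_p(B)$ for each $p\in(1,\infty)$, and $H^m_p(B)\hookrightarrow\cS'(\bR^d;B)\hookrightarrow\cH$ continuously. Thus $(H^m_{p_0}(A_0),H^m_{p_1}(A_1))$ is a genuine interpolation couple inside $\cH$, and $\Lambda^m$ maps $H^m_{p_0}(A_0)+H^m_{p_1}(A_1)$ into $L_{p_0}(A_0)+L_{p_1}(A_1)$.

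Next I would apply Theorem \ref{theorem i1}(i) to $S:=\Lambda^m$: its restriction to $H^m_{p_i}(A_i)$ is an isometry into $L_{p_i}(A_i)$ for $i=0,1$, so $S$ restricts to an operator of norm $\le 1$
$$
[H^m_{p_0}(A_0),H^m_{p_1}(A_1)]_{\theta}\longrightarrow[L_{p_0}(A_0),L_{p_1}(A_1)]_{\theta}.
$$
Applying the same theorem to $S^{-1}=\Lambda^{-m}$ gives an operator of norm $\le 1$ in the opposite direction, so $\Lambda^m$ is in fact an isometric isomorphism between these two complex interpolation spaces. By Theorem \ref{theorem i1}(ii) — applicable since $A_0,A_1$, and hence $A_0+A_1$ and $A_{\theta}$, are separable Banach spaces and $\bR^d$ with Lebesgue measure is $\sigma$-finite — the target equals $L_{p_{\theta}}(A_{\theta})$ with $1/p_{\theta}=(1-\theta)/p_0+\theta/p_1$, with equivalent norms.

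Finally, by the very definition of the Bessel potential space, $\Lambda^m$ is also an isometric isomorphism of $H^m_{p_{\theta}}(A_{\theta})$ onto $L_{p_{\theta}}(A_{\theta})$, i.e. $H^m_{p_{\theta}}(A_{\theta})=\Lambda^{-m}L_{p_{\theta}}(A_{\theta})$ inside $\cS'(\bR^d;A_{\theta})\subset\cH$. Combining this with the previous paragraph, $[H^m_{p_0}(A_0),H^m_{p_1}(A_1)]_{\theta}=\Lambda^{-m}L_{p_{\theta}}(A_{\theta})=H^m_{p_{\theta}}(A_{\theta})$, with norms differing only by the constant supplied by Theorem \ref{theorem i1}(ii); this is exactly \eqref{H}.

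I expect the only genuinely delicate point to be the bookkeeping of the first paragraph: checking that $\Lambda^m$ is a well-defined automorphism of $\cS'(\bR^d;A_0+A_1)$ that restricts consistently to the $A_i$-valued subspaces, and that all the spaces $H^m_{p_0}(A_0)$, $H^m_{p_1}(A_1)$ and $H^m_{p_{\theta}}(A_{\theta})$ sit inside this single Hausdorff space, so that the interpolation functor and Theorem \ref{theorem i1}(i) are being applied to bona fide couples and to a single operator. Once this is set up, the rest is a formal consequence of Theorem \ref{theorem i1}(i)--(ii).
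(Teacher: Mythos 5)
Your proposal is correct and follows essentially the same route as the paper: both proofs conjugate with the Bessel potential operator $(1-\Delta)^{\pm m/2}$, apply Theorem \ref{theorem i1}(i) to reduce the order $m$ to $0$, and then invoke Theorem \ref{theorem i1}(ii) for the $L_p$-identity. The paper phrases the argument as two one-sided norm inequalities rather than as a single isometric isomorphism, and it is less explicit about the ambient space $\cS'(\bR^d;A_0+A_1)$, but these are presentational differences rather than mathematical ones.
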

\begin{proof}
We have equation \eqref{H} for $m=0$ by (ii) in 
Theorem \ref{theorem i1}. 
Since 
$$
(1-\Delta)^{-m/2}: H^0_{r}(A_{\vartheta})\to H^m_r(A_{\vartheta})
$$
is a bounded operator with operator norm 1 
for every $r\in(1,\infty)$ and $\vartheta\in[0,1]$, by 
(i) in Theorem \ref{theorem i1} the operator 
$(1-\Delta)^{-m/2}:
H^0_{p_\theta}(A_{\theta})
\to 
[H_{p_0}^m(A_0),H_{p_1}^m(A_1)]_{\theta}=:H$ 
is bounded with operator norm not greater than 1. 
Hence, taking into account that 
$(1-\Delta)^{-m/2}$ maps 
$H^0_{p_\theta}(A_{\theta})$ 
onto 
$H^m_{p_{\theta}}(A_{\theta})$, 
we have $H^m_{p_{\theta}}(A_{\theta})\subset H$. 
By (i) in Theorem \ref{theorem i1} 
we also have that $(1-\Delta)^{m/2}:H\to H^0_{p_{\theta}}(A_{\theta})$ 
is a bounded operator with operator norm not greater than 1. 
Hence, taking into account that  
$(1-\Delta)^{m/2}:H^m_{p_{\theta}}(A_{\theta})\to H^0_{p_{\theta}}(A_{\theta})$ 
is a bounded operator 
with operator norm 1, for $w\in H^m_{p_{\theta}}(A_{\theta})\subset H$ 
we get 
$$
|w|_{H}=|(1-\Delta^{-m/2})(1-\Delta^{m/2})w|_H
\leq |(1-\Delta^{m/2})w|_{H^0_{p_{\theta}}}(A_{\theta})
=|w|_{H^m_{p_\theta}(A_{\theta})}, 
$$ 
and 
$$
|w|_{H^m_{p_{\theta}}(A_{\theta})}
=|(1-\Delta^{-m/2})(1-\Delta^{m/2})w|_{H^m_{p_{\theta}}(A_{\theta})}
\leq|(1-\Delta^{m/2})w|_{H^0_{p_\theta}(A_{\theta})}
\leq |w|_{H}, 
$$
which finishes the proof of the lemma. 
\end{proof}
Hence by virtue of Theorem \ref{theorem Lpq interpolation} we have the following 
corollary. 
\begin{corollary}                                   \label{corollary H}
For $1\leq p_0\leq p_1$, $q\notin(p_0,p_1)$ and   
$\theta\in(0,1)$ 
\begin{equation*}                                                         
[H_{p_0}^m(\cL_{p_0,q}),H_{p_1}^m(\cL_{p_1,q})]_{\theta}
=H^m_{p_{\theta}}(\cL_{p_{\theta},q})
\end{equation*}
with equivalent norms for any $m\in(-\infty,\infty)$, 
where $p_{\theta}=((1-\theta)/p_0+\theta/p_1)^{-1}$ and 
$\cL_{r,q}=\cL_r\cap\cL_q$ for any $r,q\in[1,\infty)$.
\end{corollary}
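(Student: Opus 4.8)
The plan is to deduce the corollary simply by composing Theorem \ref{theorem H} and Theorem \ref{theorem Lpq interpolation}; essentially no new work is needed beyond verifying the hypotheses of those two theorems and keeping track of which identities are isometric and which hold only up to equivalence of norms.

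First I would record the structural facts needed to apply Theorem \ref{theorem H}. For $r\in[1,\infty)$ the space $\cL_{r,q}=\cL_r\cap\cL_q$ is a separable Banach space (this was noted just after Lemma \ref{lemma e}, using that $(Z,\cZ,\mu)$ is $\sigma$-finite and separable), and both $\cL_{p_0,q}$ and $\cL_{p_1,q}$ embed continuously into the Hausdorff topological vector space $\cL_1+\cL_\infty$; hence $(\cL_{p_0,q},\cL_{p_1,q})$ is an interpolation couple of separable Banach spaces. Applying Theorem \ref{theorem H} with $A_0=\cL_{p_0,q}$ and $A_1=\cL_{p_1,q}$ (and $p_0,p_1\in(1,\infty)$, which is implicit since $H^m_{p_i}$ must be defined) gives
$$
[H^m_{p_0}(\cL_{p_0,q}),H^m_{p_1}(\cL_{p_1,q})]_\theta
=H^m_{p_\theta}\big([\cL_{p_0,q},\cL_{p_1,q}]_\theta\big)
$$
for every $m\in\bR$ and $\theta\in(0,1)$, where $p_\theta=((1-\theta)/p_0+\theta/p_1)^{-1}$.

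Next I would invoke Theorem \ref{theorem Lpq interpolation}: since $1\le p_0\le p_1$ and $q\notin(p_0,p_1)$, that theorem yields
$$
[\cL_{p_0,q},\cL_{p_1,q}]_\theta
=[\cL_q\cap\cL_{p_0},\cL_q\cap\cL_{p_1}]_\theta
=\cL_q\cap\cL_{p_\theta}
=\cL_{p_\theta,q}
$$
with equivalent norms, the exponent $p_\theta$ being the same one, since $1/p_\theta=(1-\theta)/p_0+\theta/p_1$. It then only remains to observe that if a Banach space $A$ equals $\cL_{p_\theta,q}$ as a set and carries an equivalent norm, then $H^m_r(A)=H^m_r(\cL_{p_\theta,q})$ with equivalent norms for every $r\in(1,\infty)$: the map $(1-\Delta)^{m/2}$ identifies both with $H^0_r(A)=H^0_r(\cL_{p_\theta,q})$, on which the equivalence is immediate from the definition of the $L_r$-norm of Banach-valued functions. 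Combining the two displayed identities gives
$$
[H^m_{p_0}(\cL_{p_0,q}),H^m_{p_1}(\cL_{p_1,q})]_\theta=H^m_{p_\theta}(\cL_{p_\theta,q})
$$
with equivalent norms, which is the assertion.

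I do not expect a genuine obstacle: all the substance lives in Theorems \ref{theorem H} and \ref{theorem Lpq interpolation}, and the corollary is just their composition. The only points requiring mild care are (a) checking the interpolation-couple and separability hypotheses so that Theorem \ref{theorem H} is applicable, and (b) the bookkeeping that the passage from $[\cL_{p_0,q},\cL_{p_1,q}]_\theta$ to $\cL_{p_\theta,q}$ costs an equivalence of norms, so the final statement must be phrased with equivalent norms rather than isometrically.
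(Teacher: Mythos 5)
Your proposal is correct and follows essentially the same route as the paper: apply Theorem~\ref{theorem H} to the interpolation couple $(\cL_{p_0,q},\cL_{p_1,q})$ and then identify $[\cL_{p_0,q},\cL_{p_1,q}]_\theta$ with $\cL_{p_\theta,q}$ via Theorem~\ref{theorem Lpq interpolation}. The extra bookkeeping about norm equivalence and the verification of the separability/interpolation-couple hypotheses are good hygiene but do not change the argument.
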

\mysection{$L_p$ estimates}
Let Assumptions \ref{assumption L} 
and \ref{assumption eta} hold, and  
let $u=(u_t)_{t\in[0,T]}$ be a $W^{n+2}_p$-valued 
solution to \eqref{eq1}-\eqref{ini1} for some integer $n\geq 0$. 
Then by an application of Lemma \ref{lemma Ito1} 
we have 
\begin{align}
|D^nu_t|_{L_p}^p = &  |D^n\psi|_{L_p}^p
+p\int_0^tQ_{n,p}(s,u_s,f_s,g_s)+{Q}^{\xi}_{n,p}(s,u_s)
+{Q}^{\eta}_{n,p}(s,u_s)\,ds                                                                           \nonumber\\
&
+p\int_0^tD_\alpha(\cM^r_su_s+g_s^r)\,dx\,dw_s^r                                      \nonumber\\
&
+\int_0^t\int_Z\int_{\bR^d}\{( \sum_{|\alpha|=n}|D_\alpha u_{s-}
+D_\alpha(I^{\eta}u_{s-}+h_s)|^2 )^{p/2}-|D^nu_{s-}|^p\nonumber\\
&
-p\sum_{|\alpha|=n}|D^nu_{s-}|^{p-2}D_\alpha u_{s-} 
D_\alpha(I^{\eta}u_{s-}+h_s)\}\,dx\,\pi(dz,ds)                                                  \label{L_p}
\end{align}
holds almost surely for all $t\in[0,T]$, where 
\begin{align}
Q_{n,p}(t,v,f,g)= & \int_{\bR^d}p|D^nv|^{p-2}
\{\sum_{|\alpha|=n}v_\alpha D_\alpha(\cL v+f)
+\frac{1}{2}\sum_{r=1}^\infty\sum_{|\alpha|=n}
|D_\alpha (\cM^rv+g^r)|^2\}\,dx                                                       \nonumber\\
&
+\int_{\bR^d}\frac{1}{2}p(p-2)|D^nv|^{p-4}\sum_{r=1}^\infty|\sum_{|\alpha|=n}
v_\alpha D_\alpha(\cM^rv+g^r)|^2\,dx,                                                     \label{drift L}
\end{align}
\begin{equation}                                                                          \label{Qxi}
{Q}^{\xi}_{n,p}(t,v)=\int_Z\int_{\bR^d}
p|D^nv|^{p-2}\sum_{|\alpha|=n}v_{\alpha}(J^{\xi} v)_{\alpha}\,dx\,\nu(dz)
\end{equation}
\begin{equation}                                                                          \label{Qeta}
{Q}^{\eta}_{n,p}(t,v)=\int_Z\int_{\bR^d}
p|D^nv|^{p-2}\sum_{|\alpha|=n}v_{\alpha}(J^{\eta} v)_{\alpha}\,dx\,\mu(dz)
\end{equation}
for $v\in W^{n+2}_2$,  for each $f\in W^m_p$, 
$g\in W^{m+1}_p(l_2)$, $h\in W_p^{m+1}(\cL_{p,2})$, 
$\omega\in\Omega$ and $t\in[0,T]$.   
Recall that the notation $v_{\alpha}=D_{\alpha}v$ is often used. 
In order to estimate the right-hand side of \eqref{L_p}, 
we also define for integers $n\in[0,m]$ and $p\geq2$ the 
``$p$-form"  
$$                 
\hat Q_{n,p}(t,v,h)=\int_Z\int_{\bR^d}p|D^nv|^{p-2}
\sum_{|\alpha|=n}v_{\alpha}\big(J^{\eta}v)_{\alpha}
\,dx\,\mu(dz)
$$
\begin{equation}                                                                                \label{drift}   
+\int_Z\int_{\bR^d}|D^n(T^{\eta}v+h)|^{p}-|D^nv|^p
-p|D^nv|^{p-2}
\sum_{|\alpha|=n}
v_{\alpha}\big((I^{\eta}v)_{\alpha}+h_{\alpha}\big) \,dx\,\mu(dz)
\end{equation}
for $v\in W^{n+2}_2$,  $h\in W_p^{m+1}(\cL_{p,2})$, 
for each $\omega\in\Omega$ and $t\in[0,T]$.   

\begin{proposition}                                                                                               \label{proposition L}
Let Assumption \ref{assumption L} hold. 
Then for integers $n\in[0,m]$ and any 
$p\in[2,\infty)$ there is a 
constant $N=N(d,p,m,K)$ such that 
\begin{equation}                                                                                                  \label{estimate L}
Q_{n,p}(v,t,f,g)
\leq N\big(|v|^p_{W^n_p}
+|f|^p_{W^n_p}+|g|^p_{W^{n+1}_p(l_2)}\big)
\end{equation}
for all $v\in W^{n+2}_p$, $f\in W^n_p$, $g\in W^{n+1}_p(l_2)$, 
$\omega\in\Omega$ and $t\in[0,T]$.  
\end{proposition}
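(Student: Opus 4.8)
The plan is to bound $Q_{n,p}(t,v,f,g)$ by first peeling off the contributions of the free data. In $Q_{n,p}$ the datum $f$ enters only linearly, through $p\int|D^nv|^{p-2}\sum_{|\alpha|=n}v_\alpha D_\alpha f\,dx$, which Young's inequality bounds by $N(|v|^p_{W^n_p}+|f|^p_{W^n_p})$; the datum $g$ enters through the complete square $\tfrac p2\sum_{r,\alpha}|D_\alpha(\cM^r_tv+g^r)|^2$ and through the last term of \eqref{drift L}. Expanding these squares, the pieces that are purely quadratic in $g$ are controlled by $N(|v|^p_{W^n_p}+|g|^p_{W^{n}_p(l_2)})$, while the mixed pieces $p\int|D^nv|^{p-2}\sum_{r,\alpha}\sigma^{ir}_t(D_iv_\alpha)D_\alpha g^r\,dx$ (and their analogues from the last term of \eqref{drift L}) are treated by integrating by parts in $x$ so as to shift the derivative falling on $v$ onto $g^r$, and then by Young's inequality — this is where the gain of one spatial derivative in the norm of $g$, i.e. $W^{n+1}_p(l_2)$ rather than $W^n_p(l_2)$, is used. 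It then suffices to prove $Q_{n,p}(t,v,0,0)\le N|v|^p_{W^n_p}$.

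For the homogeneous form I would write $D_\alpha(\cL_tv)=\cL_tv_\alpha+[D_\alpha,\cL_t]v$ and $D_\alpha(\cM^r_tv)=\cM^r_tv_\alpha+[D_\alpha,\cM^r_t]v$; by Leibniz' rule and Assumption \ref{assumption L} the commutators $[D_\alpha,\cL_t]$, $[D_\alpha,\cM^r_t]$ are differential operators in $v$ of orders at most $n+1$ and $n$ respectively, with coefficients bounded in terms of $K$ and built from the derivatives of $a,b,c$ and of $\sigma^{ir},\beta^r$. Integrating by parts (twice) the principal contribution $p\int|D^nv|^{p-2}\sum_{|\alpha|=n}v_\alpha a^{ij}_tD_{ij}v_\alpha\,dx$ gives
$$
-p\int|D^nv|^{p-2}a^{ij}_t\sum_{|\alpha|=n} D_iv_\alpha D_jv_\alpha\,dx
-p(p-2)\int|D^nv|^{p-4}a^{ij}_t\Big(\sum_\alpha v_\alpha D_iv_\alpha\Big)\Big(\sum_\beta v_\beta D_jv_\beta\Big)\,dx
+\int (D_{ij}a^{ij}_t)\,|D^nv|^p\,dx,
$$
the last integral being $\le N|v|^p_{W^n_p}$ by the boundedness of the second derivatives of $a$. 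Adding the quadratic-in-$D^{n+1}v$ parts of the Wiener terms, namely $\tfrac p2\int|D^nv|^{p-2}\sigma^{ir}_t\sigma^{jr}_t\sum_\alpha D_iv_\alpha D_jv_\alpha\,dx$ coming from $\tfrac12\sum_r|\cM^r_tv_\alpha|^2$ and the analogous contribution of the last term of \eqref{drift L}, the coefficient matrices combine, since $\alpha^{ij}=2a^{ij}-\sigma^{ir}\sigma^{jr}$, into $-\tfrac p2\alpha^{ij}_t$ and $-\tfrac{p(p-2)}2\alpha^{ij}_t$; as $p\ge2$ and $\alpha^{ij}_t z^iz^j\ge0$ by \eqref{parabolicity}, both resulting forms are nonpositive. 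I would retain a small multiple of the first for later absorption and discard the rest.

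The remaining terms — the commutator contributions and the zeroth/first order parts of $\cL_t$ and $\cM^r_t$ — involve at most one derivative beyond $D^nv$, paired with derivatives of $v$ of order $\le n$ and bounded coefficients. Those in which the order-$(n+1)$ factor can be shifted by integration by parts onto a bounded coefficient and an order-$\le n$ factor reduce, via Young's inequality, to $\le N|v|^p_{W^n_p}$. The genuinely irreducible order-$(n+1)$ cross terms carry, after using $a^{ij}=a^{ji}$, the identity $\alpha^{ij}=2a^{ij}-\sigma^{ir}\sigma^{jr}$, and cancellation of the antisymmetric-in-$(i,j)$ pieces between the $\cL_t$- and $\cM^r_t$-commutator contributions, a coefficient that is a first spatial derivative of one of the \emph{nonnegative} symmetric fields $\gamma=(\alpha^{ij}_t)$ or $\gamma=(\sigma^{ir}_t\sigma^{jr}_t)$. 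For such a coefficient one exploits the elementary fact that a nonnegative $f\in C^{1,1}(\bR^d)$ satisfies $|Df|^2\le 2\,\|D^2f\|_{\infty}\,f$, applied to the functions $x\mapsto\gamma^{ij}(x)z^iz^j$; together with the Cauchy–Schwarz and Young inequalities this bounds each such cross term by $\varepsilon\int|D^nv|^{p-2}\alpha^{ij}_t\sum_\alpha D_iv_\alpha D_jv_\alpha\,dx+N_\varepsilon|v|^p_{W^n_p}$. Choosing $\varepsilon$ small enough to be absorbed into the retained nonpositive parabolic form, and then discarding it, yields \eqref{estimate L}.

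The delicate step is this last one. Since \eqref{parabolicity} only gives $\alpha^{ij}_tz^iz^j\ge0$ — the equation may degenerate completely — there is no coercive term into which a remainder of the shape $\varepsilon\int|D^nv|^{p-2}|D^{n+1}v|^2\,dx$ could be absorbed; one must instead extract from every surviving highest-order cross term a hidden factor controlled by $(\alpha^{ij}_tz^iz^j)^{1/2}$ (or by $(\sigma^{ir}_t\sigma^{jr}_tz^iz^j)^{1/2}$), which is exactly what the above inequality for nonnegative functions provides, and this is precisely what forces the regularity demanded on $a$ (two derivatives) and on $\sigma$ ($\lceil m\rceil+1$ derivatives) in Assumption \ref{assumption L}. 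The remaining bookkeeping — for $f$, $g$, and the lower-order pieces — is routine by comparison.
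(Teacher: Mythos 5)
The paper does not actually prove this proposition; it simply cites \cite{GGK}, so there is no internal proof to compare against. Your sketch is a sound reconstruction of what that reference contains and captures all the essential mechanisms: separating off the free data by Young's inequality (with one integration by parts to move the extra derivative onto $g$ in the mixed $D^{n+1}v$--$D^ng$ term, which is exactly why $g$ must be measured in $W^{n+1}_p(l_2)$ rather than $W^n_p(l_2)$); splitting $D_\alpha(\cL_tv)$ and $D_\alpha(\cM^r_tv)$ by commutators; integrating the principal $a^{ij}D_{ij}$-part by parts so that, once the quadratic $\sigma$-contributions from both pieces of \eqref{drift L} are added, the leading $D^{n+1}v$-quadratic forms carry the coefficients $-\tfrac p2\alpha^{ij}$ and $-\tfrac{p(p-2)}2\alpha^{ij}$ and are nonpositive by \eqref{parabolicity}; and, crucially, recognising that in the degenerate situation there is no coercive term into which order-$(n+1)$ remainders can be absorbed, so one must extract from the surviving first-order cross terms a hidden square root of the nonnegative form $\alpha^{ij}z^iz^j$ (or $\sigma^{ir}\sigma^{jr}z^iz^j$) via $|Df|^2\le 2\,\|D^2f\|_\infty\,f$ for nonnegative $C^{1,1}$ functions. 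That inequality (Oleinik--Radkevich/Glaeser) is indeed the engine of the a~priori estimate in \cite{GGK}, and your accounting of why Assumption~\ref{assumption L} demands two derivatives of $a$ and $\lceil m\rceil+1$ of $\sigma$ is exactly right. One caveat worth flagging: the cancellation you invoke between ``antisymmetric-in-$(i,j)$ pieces'' of the $\cL$- and $\cM$-commutator cross terms is not quite as automatic as stated --- the $\cM$-side produces $\sigma^{ir}D_k\sigma^{jr}$ rather than $\tfrac12D_k(\sigma^{ir}\sigma^{jr})$, and the bilinear expression $D_iv_\alpha\,D_jv_{\alpha^{(k)}}$ it multiplies is not symmetric in $(i,j)$, so organising the algebra so that only derivatives of nonnegative fields survive requires some care (in \cite{GGK} this is encapsulated in a lemma on nonnegative matrix-valued $C^2$ functions rather than done by an ad~hoc symmetry argument). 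But the core ideas, and the reason the argument works without any nondegeneracy, are correctly identified.
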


\begin{proof} This estimate is proved in \cite{GGK} in a more general setting. 
\end{proof}

\begin{proposition}                                                                         \label{proposition Qxieta}
Let Assumption \ref{assumption xi} 
hold. Then the following statements hold:
\begin{enumerate}[(i)]
\item If $p=2^k$ for an integer $k\geq1$ then for integers $n\in[0,m]$ we have 
\begin{equation}                                       \label{estimate tilde}
{Q}^{\xi}_{n,p}(t,v)\leq N|v|^p_{W^n_p}
\end{equation}
 for $v\in W^{n+2}_p$, $\omega\in\Omega$ and $t\in[0,T]$,  
with a constant $N=N(d,p,m,K,K_{\xi})$.
\item For integers $n\in[0,m]$ and for all $p\geq2$ 
\begin{equation}                                                                                \label{Jpestimate}
\int_Z\int_{\bR^d}|D^nv|^{p-2}
\sum_{|\alpha|=n}v_{\alpha}J^{\xi}v_{\alpha}\,dx\,\nu(dz)
\leq N|v|^p_{W^n_p}
\end{equation}
for $v\in W^{n+2}_p$, $\omega\in\Omega$ and $t\in[0,T]$,  
with a constant $N=N(d,p,m,K,K_{\xi})$. 
\end{enumerate}  
\end{proposition}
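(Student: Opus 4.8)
\emph{Proof sketch.} The plan is to prove (ii) first, since it turns out to be the clean building block behind (i). Write $w=(v_\alpha)_{|\alpha|=n}$, so that $|w|=|D^nv|$ (here $w$ takes values in $\bR^{d_n}$, $d_n$ being the number of multi-numbers of length $n$). Using $J^\xi v_\alpha=v_\alpha(\cdot+\xi)-v_\alpha-\xi^iD_iv_\alpha$ together with $D_i(|w|^p)=p|w|^{p-2}\sum_{|\alpha|=n}v_\alpha D_iv_\alpha$, one has the pointwise identity
$$
p|D^nv|^{p-2}\sum_{|\alpha|=n}v_\alpha J^\xi v_\alpha=p|w|^{p-2}\,w\cdot\big(w(\cdot+\xi)-w\big)-\xi^iD_i(|w|^p),
$$
and the convexity of $x\mapsto|x|^p$ on $\bR^{d_n}$ (valid for $p\ge2$), in the form $p|a|^{p-2}a\cdot(b-a)\le|b|^p-|a|^p$, yields the pointwise bound
$$
p|D^nv|^{p-2}\sum_{|\alpha|=n}v_\alpha J^\xi v_\alpha\le J^\xi\big(|D^nv|^p\big),
$$
with the scalar operator $J^\xi$ of \eqref{def TJI} on the right. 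When $v\in W^{n+2}_p$ and $p\ge2$ one checks that $|D^nv|^p\in W^2_1$ (its first two generalised derivatives are dominated by $|D^nv|^{p-1}|D^{n+1}v|$, respectively $|D^nv|^{p-2}|D^{n+1}v|^2+|D^nv|^{p-1}|D^{n+2}v|$, which lie in $L_1$ by H\"older's inequality). Hence the $\xi$-version of Lemma \ref{lemma intIJ}(ii) — valid with $\xi,\bar\xi,\nu$ in place of $\eta,\bar\eta,\mu$ by Assumption \ref{assumption xi} and the same proof — gives $\int_{\bR^d}J^\xi(|D^nv|^p)\,dx\le N\bar\xi^2(z)|D^nv|^p_{L_1}$. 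Integrating over $Z$ and using $\int_Z\bar\xi^2(z)\,\nu(dz)=K^2_\xi$ yields \eqref{Jpestimate}; a mollification of $v$ makes the differential identities above rigorous. This argument works for every $p\ge2$.

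For (i) I would start from the chain rule. An induction on $n$, whose base case is $D_k(J^\xi v)=J^\xi(D_kv)+(D_k\xi^j)I^\xi(D_jv)$, shows that for $|\alpha|=n$
$$
D_\alpha(J^\xi v)=J^\xi(D_\alpha v)+R_\alpha,
$$
where $R_\alpha$ is a finite sum of terms, each a product of one or more derivatives (of order $\le\bar m$) of components of $\xi$ with either $I^\xi$ applied to a derivative of $v$ of order $\le n$, or a derivative of $v$ of order $\le n$ evaluated along $x\mapsto x+\theta\xi(x)$ for some $\theta\in[0,1]$. Substituting this into $Q^\xi_{n,p}$, the contribution of $J^\xi(D_\alpha v)$ is exactly $p$ times the integrand estimated in (ii), hence is $\le N|v|^p_{W^n_p}$. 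Among the terms coming from $R_\alpha$, all those in which no derivative of $v$ of order $>n$ arises after expanding $I^\xi$ by Taylor's formula \eqref{taylor1} — and which therefore carry, using $|D^k\xi|\le\bar\xi$ for $k\le\bar m$, a factor $\bar\xi^2$ — are disposed of directly: H\"older's inequality and the change of variables $y=x+\theta\xi(x)$, whose Jacobian is two-sidedly controlled by Assumption \ref{assumption xi} and Lemma \ref{lemma diff}, bound their contribution by $N\big(\int_Z\bar\xi^2\,d\nu\big)|v|^p_{W^n_p}$.

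The delicate part, and the reason for the restriction $p=2^k$, is the remaining terms of $R_\alpha$, namely those of the form (a single first-order derivative of $\xi$) $\times\,I^\xi(D_\beta v)$ with $|\beta|=n$: expanding $I^\xi$ by \eqref{taylor1} produces a derivative of $v$ of order $n+1$, an apparent loss of one derivative relative to the target norm, whereas keeping $I^\xi$ unexpanded leaves only a single factor of $\bar\xi$, which is not integrable against $\nu$. I would treat these by first integrating by parts in $x$, using $I^\xi\varphi=\varphi(\cdot+\xi)-\varphi$ and $D_j[v(\cdot+\xi)]=(D_jv)(\cdot+\xi)+(D_j\xi^l)(D_lv)(\cdot+\xi)$, so as to move one $x$-derivative off $v(\cdot+\xi)$ and onto $|D^nv|^{p-2}v_\alpha$ and the $\xi$-coefficients; pairing the resulting terms appropriately makes the isolated single factors of $\bar\xi$ recombine, through differences of the shape $v(\cdot+\xi)-v$, into $\bar\xi^2$'s that are integrable in $z$. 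This integration by parts reintroduces, via $D_i(|D^nv|^{p-2})$ and repetition of the procedure, expressions with factors $|D^nv|^{p-2-2l}(D^{n+1}v)^{2l}$; exploiting the degenerate non-negativity built into $J^\xi$ — its coefficient in the second-order Taylor form \eqref{taylor2} is the non-negative rank-one form $\xi^i\xi^j$ — one cancels the genuinely top-order contributions, as in Krylov's treatment of the second-order term (cf.\ Proposition \ref{proposition L} and \cite{GGK}), and the remaining errors, due to this coefficient being frozen at the shifted points $x+\theta\xi(x)$ rather than at $x$, are absorbed by Young's inequality, which lowers the power of $|D^nv|$ by a factor of two. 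Iterating $k$ times terminates at the case $p=2$, handled directly in the spirit of \cite{GG}. The main obstacle is precisely this bookkeeping: arranging the integrations by parts and the Young splittings so that at each stage no derivative is lost and every surviving coefficient is $\nu$-integrable, and verifying that the induction on $k$ closes — which is why $p$ must be a power of $2$.
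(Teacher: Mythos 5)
Your proof of part (ii) is correct and coincides with the paper's: both rewrite the integrand as $J^\xi(|D^nv|^p)$ plus the (nonpositive, by convexity of $|\cdot|^p$) defect $p|D^nv|^{p-2}\sum_\alpha v_\alpha I^\xi v_\alpha - I^\xi(|D^nv|^p)$, and then apply the $\xi$-version of Lemma \ref{lemma intIJ}(ii). The algebraic identity, the convexity inequality, and the $W^2_1$-check you perform are all precisely what the paper does; the only cosmetic difference is that the paper writes the decomposition as $p|D^nv|^{p-2}v_\alpha I^\xi v_\alpha + J^\xi|D^nv|^p - I^\xi|D^nv|^p$ and you write the equivalent $p|w|^{p-2}w\cdot(w(\cdot+\xi)-w) - \xi^iD_i(|w|^p)$.

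For part (i), however, there is a genuine gap. The paper does not reprove this estimate; it simply cites Theorem~4.1 of \cite{DGW}, where the full argument (including the commutator structure $(J^\xi v)_\alpha = J^\xi v_\alpha + R_\alpha$ and its interplay with the power $p=2^k$) is worked out. Your sketch correctly identifies the dangerous terms in $R_\alpha$ as those of the form $(D_\gamma\xi^j)\,I^\xi(D_\beta v)$ with $|\gamma|=1$, $|\beta|=n$ — these carry only a single factor of $\bar\xi$ unless one expands $I^\xi$, in which case a derivative of order $n+1$ appears — and your proposed remedy (integrating by parts to recombine single $\bar\xi$'s into $\bar\xi^2$'s, then Young splittings that halve the power of $|D^nv|$, iterating $k$ times) is a plausible outline of the mechanism that forces $p=2^k$. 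But it remains a plan, not a proof: you have not shown that after integration by parts the single-$\bar\xi$ factors actually do pair into $\bar\xi^2$'s rather than leaving an uncontrolled residue; you have not shown that the putative ``top-order cancellation'' via the rank-one form $\xi^i\xi^j$ survives the evaluation at the shifted point $x+\theta\xi(x)$ (this is exactly where the nondegeneracy-free argument is subtle); and you have not verified that the recursion in $k$ closes without accumulating derivatives. You acknowledge all of this yourself, but an acknowledged gap is still a gap. Since the paper defers part (i) entirely to \cite{DGW}, your submission for (i) should either carry out the induction in full detail or, like the paper, cite the reference where it is done.
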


\begin{proof}
Statement (i) is proved in \cite{DGW}, see Theorem 4.1 therein. To prove (ii) 
notice that 
\begin{equation*}
p|D^nv|^{p-2}v_{\alpha}J^{\xi}v_{\alpha}=
p|D^nv|^{p-2}v_{\alpha}(I^{\xi}v_{\alpha}-v_{\alpha i}\xi^i)
\end{equation*}
$$
=p|D^nv|^{p-2}v_{\alpha}I^{\xi}v_{\alpha}+J^{\xi}|D^nv|^p-I^{\xi}|D^nv|^p. 
$$
Consider the vector $D^nv=(v_{\alpha})_{|\alpha|=n}$. 
Then $|D^nv|^p$ is a convex function 
of $D^nv$. Consequently, 
$$
I^{\xi}|D^nv|^p-p|D^nv|^{p-2}v_{\alpha}I^{\xi}v_{\alpha}\geq0. 
$$
Hence, taking into account Lemma \ref{lemma intIJ} (ii), with $\xi$ 
in place of $\eta$,  
we get \eqref{Jpestimate}.
\end{proof}

\begin{proposition}                                                                 \label{proposition hat}
Let Assumption \ref{assumption eta} hold and let $n\in[0,m]$ be an integer. 
Then 
for $p=2$ there is a constant 
$N=N(d,m, K,K_\eta)$ such that for all $\omega\in\Omega$ 
and  $t\in[0,T]$  
\begin{equation}                                                                                               \label{estimate hat}
\hat Q_{n,2}(v,t,h)\leq N\big(|v|^2_{W^n_2}+|h|^2_{W^{n+1}_2(\cL_2)}\big)
\end{equation}
for all $v\in W^{n+2}_p$ and $h\in W^{n+1}_2(\cL_2)$.   
Moreover, for $p\in[4,\infty)$ we have a constant 
$N=N(K,d,m,p,K_\eta)$ such that for 
$\omega\in\Omega$ and  $t\in[0,T]$   
\begin{equation}                                                                                               \label{estimate hat1}
\hat Q_{n,p}(v,t,h)\leq N(|v|^p_{W^n_p}+|h|^p_{W^{n+2}_p(\cL_{p,2})})
\end{equation}
 for all $v\in W^{n+2}_p$ and $h\in W^{n+2}_p(\cL_{p,2})$. 
For $h=0$ this estimate holds for all $p\in[2,\infty)$. 
\end{proposition}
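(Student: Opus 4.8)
I would follow the scheme of the corresponding estimate in \cite{DGW}, splitting $\hat Q_{n,p}(t,v,h)$ into its ``compensator part'', which is precisely $Q^{\eta}_{n,p}(t,v)$ (see \eqref{Qeta}), and its ``jump--measure part'' $R_{n,p}(t,v,h)$, the remaining integral in the definition of $\hat Q_{n,p}$. For the compensator part one already has $Q^{\eta}_{n,p}(t,v)\le N|v|^p_{W^n_p}$ from Proposition \ref{proposition Qxieta}(ii) applied with $\eta$, $\mu$ and $K_{\eta}$ in place of $\xi$, $\nu$ and $K_{\xi}$ (legitimate by Assumption \ref{assumption eta}); the only subtlety is that $(J^{\eta}v)_{\alpha}=D_{\alpha}(J^{\eta}v)$ differs from $J^{\eta}(D_{\alpha}v)$ by Leibniz and chain--rule terms, each carrying at least one factor $D_{\gamma}\eta$ with $|\gamma|\ge1$, which I would dispose of as in Remark \ref{remark ibp}, integrating by parts and absorbing them by means of Lemma \ref{lemma intIJ}(i) and Lemma \ref{lemma TIJ}: these turn the single factor $\bar\eta$ into $\bar\eta^2$, so that the outer integral against $\mu$ converges thanks to $K_{\eta}^2=\int_Z\bar\eta^2\,d\mu<\infty$.

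For the jump--measure part, write $\tau_{\eta}(x):=x+\eta_{t,z}(x)$ and express the post--jump vector as $D^nv+D^n(T^{\eta}v+h)=D^n(v\circ\tau_{\eta})+D^nh=D^nv+I^{\eta}(D^nv)+E+D^nh$, where $E$ is the Fa\`a di Bruno remainder of the change of variables; by Assumption \ref{assumption eta} one has $|E(x,z)|\le N\bar\eta(z)\sum_{|\beta|\le n}|(D_{\beta}v)(\tau_{\eta}(x))|$, so $|E(\cdot,z)|_{L_{p}}\le N\bar\eta(z)|v|_{W^n_p}$ by the change--of--variables bounds of Lemma \ref{lemma diff2}. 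The key observation is that $D^nv+I^{\eta}(D^nv)=(D^nv)\circ\tau_{\eta}$, so the contribution of the summand $I^{\eta}(D^nv)$ to the integrand $|D^nv+(\text{increment})|^p-|D^nv|^p-p|D^nv|^{p-2}D^nv\cdot(\text{increment})$ of $R_{n,p}$ can be computed exactly by the substitution $y=\tau_{\eta}(x)$, leaving an integral of $|D^nv|^p\big(|\det D\tau_{\eta}^{-1}|-1\big)$ together with the single ``dangerous'' contribution $\tfrac{p}{2}\int_Z\!\int|D^nv|^{p-2}|I^{\eta}(D^nv)|^2\,dx\,\mu(dz)$, which would raise the spatial order to $n+1$; this contribution is exactly cancelled by the corresponding term in $Q^{\eta}_{n,p}(t,v)$ via the identity $\sum_{|\alpha|=n}D_{\alpha}v\,J^{\eta}(D_{\alpha}v)=\tfrac12 J^{\eta}(|D^nv|^2)-\tfrac12|I^{\eta}(D^nv)|^2$. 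Among the terms surviving the cancellation are: $J^{\eta}$ of the \emph{scalar} $|D^nv|^p$, bounded by $N\bar\eta^2(z)|v|^p_{W^n_p}$ by Lemma \ref{lemma intIJ}(ii); the Jacobian increment $|\det D\tau_{\eta}^{-1}|-1$, which after pairing the conjugate expressions generated by the substitution is $O(\bar\eta^2)$; the second--order Taylor remainder of $w\mapsto|w|^p$ about $(D^nv)\circ\tau_{\eta}$ at the increment $E+D^nh$; and linear cross terms pairing $E+D^nh$ against $|D^nv|^{p-2}D^nv$ and against $|(D^nv)\circ\tau_{\eta}|^{p-2}(D^nv)\circ\tau_{\eta}$. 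The terms depending only on $E$ or only on $D^nh$ are handled by H\"older's and Young's inequalities, using $\int_Z\!\int|D^nh|^p\,dx\,\mu(dz)\le|h|^p_{W^n_p(\cL_p)}$ and $\int_Z\!\int|D^nv|^{p-2}|D^nh|^2\,dx\,\mu(dz)\le N\big(|v|^p_{W^n_p}+|h|^p_{W^n_p(\cL_2)}\big)$; the mixed terms are treated by the same substitution $y=\tau_{\eta}(x)$, which converts them into integrals of $D^nv$ (or $D^nh$) against a difference of coefficients vanishing when $\eta\equiv0$ and hence carrying an extra factor $\bar\eta$, after which a single integration by parts removes the spurious derivative of order $n+1$, in the spirit of Lemma \ref{lemma intIJ}.

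The dichotomy $p=2$ versus $p\ge4$ records where this ``substitute and difference'' mechanism still has to transfer a derivative onto $h$. For $p=2$ the Taylor remainder is an exact square and the factor $|(D^nv)\circ\tau_{\eta}|^{p-2}$ is absent, so the only $h$--term requiring a derivative transfer is $\int_Z\!\int I^{\eta}(D^nv)\cdot D^nh\,dx\,\mu(dz)$; the substitution turns it into an integral of $D^nv$ against $D^nh(\tau_{\eta}^{-1}(\cdot),z)|\det D\tau_{\eta}^{-1}|-D^nh(\cdot,z)$, bounded by $N\bar\eta(z)|v|_{W^n_2}|D^{n+1}h(\cdot,z)|_{L_2}$, and summing in $z$ with $\int_Z\bar\eta^2\,d\mu<\infty$ and $\int_Z|D^{n+1}h(\cdot,z)|^2_{L_2}\,d\mu=|D^{n+1}h|^2_{L_2(\cL_2)}$ (Fubini, both exponents being $2$) produces the term $|h|^2_{W^{n+1}_2(\cL_2)}$. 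For $p\ge4$ the factor $|(D^nv)\circ\tau_{\eta}|^{p-2}$ forces an additional substitution in the corresponding cross term (the spatial $L_p$ integral and the $\cL_2$ integral can no longer be interchanged when $p>2$), and this extra substitution transfers one more derivative onto $h$, yielding $|h|^p_{W^{n+2}_p(\cL_{p,2})}$; the intermediate range $2<p<4$ is excluded because there the second--order Taylor remainder of $|w|^p$ is no longer dominated uniformly by $|w|^{p-2}|\,\cdot\,|^2$ (the exponent $\tfrac p2-2$ being negative), and the interpolation needed to complete the estimate fails. For $h=0$ all $h$--terms vanish and the argument runs for every $p\in[2,\infty)$. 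The main obstacle I anticipate is the bookkeeping in the jump--measure part: once the $|I^{\eta}(D^nv)|^2$--term has been cancelled, one must verify for each of the remaining remainders simultaneously that it contains no more than $n$ spatial derivatives of $v$ (so that \eqref{estimate hat}--\eqref{estimate hat1} close at level $n$) and that, after the substitution $y=\tau_{\eta}(x)$ and one integration by parts, it carries at least two powers of $\bar\eta$ (equivalently of $|D^k\eta|$) so as to be integrable against $\mu$; organising the chain--rule coefficients---separating their identity part at top order from the genuinely lower--order pieces---and repeatedly invoking Lemma \ref{lemma intIJ}(i)--(ii) to upgrade a single $\bar\eta$ to $\bar\eta^2$ is the technical heart of the proof.
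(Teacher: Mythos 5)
Your plan captures the right mechanism—internal cancellation between the compensator and jump–measure parts of $\hat Q_{n,p}$, followed by the ``substitute–and–difference'' trick to upgrade $\bar\eta$ to $\bar\eta^2$ while keeping the spatial derivative count at $n$, and finally the one integration by parts—but there are two concrete gaps.

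\textbf{First gap: the compensator part cannot be estimated on its own.} You claim that $Q^{\eta}_{n,p}(t,v)\le N|v|^p_{W^n_p}$ can be established independently, by applying Proposition \ref{proposition Qxieta}(ii) to the ``principal'' piece $J^{\eta}(D_\alpha v)$ and handling the chain-rule discrepancy $(J^{\eta}v)_\alpha - J^{\eta}(D_\alpha v)$ via Lemma \ref{lemma intIJ}(i). This does not go through. For $n=1$, $(J^{\eta}v)_\alpha - J^{\eta}v_\alpha = \eta^k_\alpha I^{\eta}v_k$, and the integrand $p|D^nv|^{p-2}v_\alpha\eta^k_\alpha I^{\eta}v_k$ carries only one explicit factor of $\bar\eta$ (from $\eta^k_\alpha$) while having either $n+1$ spatial derivatives on $v$ (if you use Lemma \ref{lemma TIJ} on $I^{\eta}v_k$ to gain the second $\bar\eta$) or only one $\bar\eta$ (if you keep all $v$-factors at order $n$). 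Lemma \ref{lemma intIJ}(i) cannot rescue this: it requires the test function $\zeta$ to satisfy a \emph{pointwise} bound $|\zeta|+|D\zeta|\le K\bar\eta$, and $\zeta=p|D^nv|^{p-2}v_\alpha\eta^k_\alpha$ is not bounded since it depends on $v$. In the paper this chain-rule remainder (the term $-p|D^nv|^{p-2}\sum\eta^k_{\alpha(\kappa_1)}v_\alpha v_{\alpha(\kappa_0)k}$ exposed by Lemma \ref{lemma estimate}) is \emph{never} estimated by itself: it is paired with the leading Fa\`a di Bruno piece $pT^{\eta}|D^nv|^{p-2}T^{\eta}v_\alpha G^{(\alpha)}$ coming out of $B$ (Lemma \ref{lemma B}), and only the difference, of the form $\eta^k_{\alpha(\kappa_1)}I^{\eta}(|D^nv|^{p-2}v_\alpha v_{\alpha(\kappa_0)k})$, is integrable against $\mu$ by Lemma \ref{lemma intIJ}(i). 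So the cancellation is indispensable not only for the term you single out ($|I^{\eta}(D^nv)|^2$, or in the paper's language $\pm I^{\eta}|D^nv|^p$), but equally for these chain-rule corrections. Your second paragraph acknowledges such pairings, but your first paragraph dismisses them prematurely and would, if taken literally, break the argument.

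\textbf{Second gap: the reason for the restriction $p\notin(2,4)$ is misidentified.} You attribute it to the second-order Taylor remainder of $w\mapsto|w|^p$ failing to be dominated by $|w|^{p-2}|\Delta|^2$; but this remainder is in fact bounded by $C_p(|w|^{p-2}|\Delta|^2+|\Delta|^p)$ for every $p\ge 2$, so no obstruction arises there (and indeed the paper gets \eqref{estimate hat1} with $h=0$ for all $p\ge 2$). The genuine source of the restriction is Lemma \ref{lemma HK}: to bound the $h$-cross-terms such as $H^{(0)}_p(v,h)=\int_Z\bigl|\int_{\bR^d}J^{\eta}(|D^nv|^{p-2}v_\alpha)h_\alpha\,dx\bigr|\,\mu(dz)$, one must represent $J^{\eta}(|D^nv|^{p-2}v_\alpha)$ by the second-order Taylor formula \eqref{taylor2}, i.e.\ differentiate $|D^nv|^{p-2}v_\alpha$ twice. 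Differentiating this prefactor produces powers $|D^nv|^{p-3}$ and $|D^nv|^{p-4}$; the hypothesis $p\ge 4$ is exactly what keeps these exponents non-negative so that the subsequent change of variables and integration by parts close at $|D^nv|^{p-1}$ and derivatives of $h$ up to order $n+2$. For $p=2$ one bypasses this entirely (the prefactor is $v_\alpha$, and a first-order Taylor formula suffices), which is why only $h\in W^{n+1}_2(\cL_2)$ is needed in \eqref{estimate hat}.

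Apart from these two points, the architecture you propose (the exact cancellation of the most singular term, the pairing of each shifted contribution with its local counterpart, the use of Lemma \ref{lemma intIJ} after one integration by parts, and the dichotomy in the $h$-treatment between $p=2$ and $p\ge 4$) is essentially the architecture of the paper's proof, packaged around $I^{\eta}(D^nv)$ and the Fa\`a di Bruno remainder rather than around the terms $I^{\eta}|D^nv|^p$ and $G^{(\alpha)}(v)$ that the paper isolates in Lemmas \ref{lemma estimate} and \ref{lemma B}.
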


To prove this proposition we recall the notation 
$T^{\eta}$ for the operator defined 
by $T^{\eta}\varphi(x)=v(x+\eta(x))$ for functions 
$\varphi$ on $\bR^d$, and for multi-numbers 
$\alpha\neq\epsilon$ 
introduce the notations  
\begin{equation}                                                          \label{Ga}
G^{(\alpha)}(v):=
\sum_{1\leq k\leq|\alpha|}
\sum_{\kappa_0\sqcup\kappa_1\sqcup...\sqcup\kappa_k=s_{\alpha}}
\eta^{i_1}_{\alpha(\kappa_1)}\cdots\eta^{i_k}_{\alpha(\kappa_k)}
T^{\eta}v_{\alpha(\kappa_0)i_1i_2...i_k}
\end{equation}
for $v\in W^m_p$, where $s_{\alpha}:=\{1,2,...,|\alpha|\}$, and   
summation over $\kappa_0\sqcup\kappa_1\sqcup...\sqcup\kappa_k=s_{\alpha}$ 
means summation over all partitions of $s_{\alpha}$ into disjoint subsets 
$\kappa_0$,...,$\kappa_k$ such that $\kappa_j\neq\emptyset$  
for $j\geq1$, and two partitions 
$\kappa_0\sqcup\kappa_1\sqcup...\sqcup\kappa_k=s_{\alpha}$ 
and 
$\kappa_0'\sqcup\kappa_1'\sqcup...\sqcup\kappa_k'=s_{\alpha}$ 
are different if either 
$\kappa_0\neq\kappa_0'$ or for some $i=1,2,...,k$ the set $\kappa_i$ 
is different from each of the sets $\kappa_j'$ for $j=1,2,...k$. For a 
subset $\kappa\subset s_{\alpha}$ the notation $\alpha(\kappa)$ 
means the multi-number 
$\alpha_{j_1}...\alpha_{j_r}$, where $j_1$,...,$j_r$ the elements of $\kappa$ 
in increasing order. When $\kappa_0$ is the empty set, then 
$\alpha(\kappa_0)=\epsilon$, the multi-number of length zero. 
Recall that $v_{\alpha}=D_{\alpha}v$ and $D_{\epsilon}v=v$. 

Noticing that for $i=1,2,...,k$ 
$$
(T^{\eta}v)_i-T^{\eta}v_i=\eta^k_iT^{\eta}v_k,  
$$
by induction on the length of multi-numbers $\alpha$ we get 
\begin{equation}                                                                          \label{TG}
(T^{\eta}v)_{\alpha}-T^{\eta}v_{\alpha}
=G^{(\alpha)}\quad\text{for $|\alpha|\geq0$}.
\end{equation}

We will prove Proposition \ref{proposition hat} by the help of the following 
lemmas. 
\begin{lemma}                             \label{lemma estimate}
Let Assumption \ref{assumption eta} hold. 
Then for $p\geq2$  and integers $n\in[0,m]$ we have 
$$
p|D^nv|^{p-2}\sum_{|\alpha|=n}v_{\alpha}((J^{\eta}v)_{\alpha}-(I^{\eta}v)_{\alpha})=
J^{\eta}|D^nv|^{p}-I^{\eta}|D^nv|^{p}
$$
\begin{equation*}
-p|D^nv|^{p-2}
\sum_{1\leq|\alpha|=n}
\sum_{\kappa_0\sqcup\kappa_1=s_{\alpha}}
v_{\alpha}\eta^k_{\alpha(\kappa_1)}v_{\alpha(\kappa_0)k}
\end{equation*}
for all $v\in W^{n+1}_p$, $\omega\in\Omega$, $z\in Z$ and $t\in[0,T]$. 
\end{lemma}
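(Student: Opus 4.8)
The identity is purely algebraic: it will follow from the iterated product rule for $D_{\alpha}$ applied to the product $\eta^k v_k$, combined with the chain rule for $D_k|D^nv|^p$. The plan is first to rewrite the left-hand side. Since by definition $I^{\eta}v=T^{\eta}v-v$ and $J^{\eta}v=T^{\eta}v-v-\eta^k v_k$, we have $J^{\eta}v-I^{\eta}v=-\eta^k v_k$; applying $D_{\alpha}$ for a multi-number $\alpha$ with $|\alpha|=n$ gives
$$
(J^{\eta}v)_{\alpha}-(I^{\eta}v)_{\alpha}=-D_{\alpha}(\eta^k v_k),
$$
which also makes the left-hand side meaningful for $v\in W^{n+1}_p$, since $\eta^k v_k\in W^n_p$ by Assumption \ref{assumption eta}. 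Hence the left-hand side of the asserted identity equals $-p|D^nv|^{p-2}\sum_{|\alpha|=n}v_{\alpha}D_{\alpha}(\eta^k v_k)$.

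Next I would expand $D_{\alpha}(\eta^k v_k)$ by the iterated product rule, isolating the single summand in which no derivative falls on $\eta^k$:
$$
D_{\alpha}(\eta^k v_k)=\eta^k v_{\alpha k}+\sum_{\kappa_0\sqcup\kappa_1=s_{\alpha}}\eta^k_{\alpha(\kappa_1)}v_{\alpha(\kappa_0)k},
$$
where, in the convention of the paper, the displayed sum runs over partitions with $\kappa_1\neq\emptyset$ and $\kappa_0$ possibly empty (the term $\eta^k v_{\alpha k}$ being exactly the $\kappa_1=\emptyset$ contribution). Multiplying by $p|D^nv|^{p-2}v_{\alpha}$ and summing over $|\alpha|=n$ produces the double sum on the right-hand side of the lemma together with the extra term $p|D^nv|^{p-2}\sum_{|\alpha|=n}v_{\alpha}\eta^k v_{\alpha k}$. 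To identify this extra term, note that since $|D^nv|^p=\big(\sum_{|\alpha|=n}|v_{\alpha}|^2\big)^{p/2}$ the chain rule gives $D_k|D^nv|^p=p|D^nv|^{p-2}\sum_{|\alpha|=n}v_{\alpha}v_{\alpha k}$, whence
$$
p|D^nv|^{p-2}\sum_{|\alpha|=n}v_{\alpha}\eta^k v_{\alpha k}=\eta^k D_k|D^nv|^p=I^{\eta}|D^nv|^p-J^{\eta}|D^nv|^p,
$$
the last step being the definition of $I^{\eta}$ and $J^{\eta}$ applied to the scalar function $|D^nv|^p$. Combining the three displays yields the claimed identity.

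Finally I would address regularity. All the computations above hold pointwise when $v\in C^{n+1}(\bR^d)$, and here $p\geq2$ guarantees that $|D^nv|^{p-2}$ is continuous. To pass to general $v\in W^{n+1}_p$ I would mollify: by Lemma \ref{lemma e}, $D_{\alpha}v^{(\varepsilon)}=(D_{\alpha}v)^{(\varepsilon)}\to D_{\alpha}v$ in $L_p$ for every $|\alpha|\leq n+1$, hence along a subsequence $dx$-a.e.; since $x\mapsto x+\eta_{t,z}(x)$ is a $C^1$-diffeomorphism of $\bR^d$ whose Jacobian determinant and its inverse are bounded (Lemma \ref{lemma diff2}), the preimage of a Lebesgue-null set is null, so $(D_{\alpha}v^{(\varepsilon)})(x+\eta_{t,z}(x))\to(D_{\alpha}v)(x+\eta_{t,z}(x))$ a.e. as well, and the bounded smoothness of $\eta$ lets one pass to the limit in every term. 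I expect no genuine difficulty here; the one place to be careful is the combinatorial bookkeeping in the second step — matching the standard product rule to the partition-sum notation $\kappa_0\sqcup\kappa_1=s_{\alpha}$ — which I would confirm by a short induction on $|\alpha|$.
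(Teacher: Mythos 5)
Your proof is correct and follows essentially the same route as the paper: write $(J^{\eta}v)_{\alpha}-(I^{\eta}v)_{\alpha}=-D_{\alpha}(\eta^{k}v_{k})$, separate out the $\kappa_{1}=\emptyset$ term of the Leibniz expansion, identify $p|D^{n}v|^{p-2}\sum_{\alpha}v_{\alpha}\eta^{k}v_{\alpha k}=\eta^{k}D_{k}|D^{n}v|^{p}$ by the chain rule, and recognise $-\eta^{k}D_{k}|D^{n}v|^{p}=J^{\eta}|D^{n}v|^{p}-I^{\eta}|D^{n}v|^{p}$. The paper treats the algebra as immediate ("Clearly") and omits the closing approximation argument, which you spell out; that extra care is sound but not strictly needed since every expression involved makes sense for $v\in W^{n+1}_{p}$ under Assumption~\ref{assumption eta}.
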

\begin{proof}
Clearly,
$$
p|D^nv|^{p-2}\sum_{|\alpha|=n}v_{\alpha}\big((J^{\eta}v)_{\alpha}-(I^{\eta}v)_{\alpha}\big)
=-p|D^n|^{p-2}\sum_{|\alpha|=n}v_{\alpha}(v_i\eta^i)_{\alpha}
$$
$$
=-p|D^nv|^{p-2}\sum_{|\alpha|=n}v_{\alpha}v_{\alpha i}\eta^i 
-p|D^n|^{p-2}\sum_{1\leq|\alpha|=n}v_{\alpha}((v_i\eta^i)_{\alpha}-v_{\alpha i}\eta^i) 
$$
$$
=\eta^iD_i|D^nv|^{p}
-p|D^nv|^{p-2}\sum_{1\leq|\alpha|=n}
\sum_{\kappa_0\sqcup\kappa_1=s_{\alpha}}
v_{\alpha}\eta^k_{\alpha(\kappa_1)}v_{\alpha(\kappa_0)k}
$$
\begin{equation*}                                                           
=J^{\eta}|D^nv|^p-I^{\eta}|D^nv|^p
-p|D^nv|^{p-2}\sum_{1\leq|\alpha|=n}
\sum_{\kappa_0\sqcup\kappa_1=s_{\alpha}}
v_{\alpha}\eta^k_{\alpha(\kappa_1)}v_{\alpha(\kappa_0)k}. 
\end{equation*}
\end{proof}
For the next lemmas consider for integers $n\geq0$ the expressions 
\begin{equation*}                                                    
|G_{n}(v)|^p:=\big(\sum_{|\alpha|=n}|G^{(\alpha)}(v)|^2\big)^{p/2}, 
\end{equation*} 
\begin{equation*}                                                
B_{n,p}(v,h):=|D^{n}(T^{\eta}v+h)|^{p}-|D^nv|^p
-p|D^nv|^{p-2}\sum_{|\alpha|=n}v_\alpha h_\alpha
\end{equation*} 
\begin{equation*}                                                       
H_{n,p}(v,h):=\int_{Z}\Big|\int_{\bR^d}
I^{\eta}(|D^nv(x)|^{p-2}v_{\alpha}(x))h_{\alpha}(x,z)\,dx\Big|\,\mu(dz)
\end{equation*}
and 
\begin{equation*}                                                       
K_{n,p}(v,h):=\int_{Z}\int_{\bR^d}
T^{\eta}|D^nv(x)|^{p-2}|D^nh(x,z)|^2\,dx\,\mu(dz)
\end{equation*}
for $v\in W^{n+1}_p$ and $h\in W^{n+1}_p(\cL_{p,2})$, 
where $G^{(\alpha)}$ is defined in \eqref{Ga}. 
\begin{lemma}                                                        \label{lemma B}                                                         
Let Assumption \ref{assumption eta} hold with $m\geq0$. 
Then for 
\begin{equation*}
\bar B_{n,p}(v,h):=B_{n,p}(v,h)-I^{\eta}|D^{n}v|^p
-pT^{\eta}|D^nv|^{p-2}T^{\eta}v_{\alpha}G^{(\alpha)}(v)
-pI^{\eta}(|D^nv|^{p-2}v_\alpha) h_\alpha 
\end{equation*}
for any $p\geq2$  and integers $n\in[0,m]$ we have 
$$
|\bar B_{n,p}(v,h)|
\leq
NT^{\eta}|D^nv|^{p-2}|G_n(v)|^{2}
+NT^{\eta}|D^nv|^{p-2}|D^nh|^2+N|G_n(v)|^p+N|D^nh|^p
$$
for $v\in W^{n+1}_p$ and $h\in W^{n+1}_p(\cL_{p,2})$, 
with a constant $N=N(p,d,m)$.  
\end{lemma}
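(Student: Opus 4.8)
The plan is to reduce the estimate, via the identity \eqref{TG}, to the elementary second-order Taylor bound for the map $y\mapsto|y|^p$ on a finite-dimensional Euclidean space (of dimension equal to the number of multi-numbers of length $n$). Fix $(\omega,t,z)$ and introduce the vectors $a:=(T^{\eta}v_{\alpha})_{|\alpha|=n}$, $b:=(G^{(\alpha)}(v))_{|\alpha|=n}$ and $c:=(h_{\alpha})_{|\alpha|=n}$, indexed by the multi-numbers $\alpha$ of length $n$. By \eqref{TG}, $D_{\alpha}(T^{\eta}v+h)=T^{\eta}v_{\alpha}+G^{(\alpha)}(v)+h_{\alpha}$ for $|\alpha|=n$, so that $|D^n(T^{\eta}v+h)|=|a+b+c|$, while by definition $|a|=T^{\eta}|D^nv|$, $|b|=|G_n(v)|$ and $|c|=|D^nh|$ pointwise in $x$. (For $v\in W^{n+1}_p$ and $h\in W^{n+1}_p(\cL_{p,2})$ these identities are understood for $dx$-almost every $x$; one may argue first for smooth $v,h$ and pass to the limit, or invoke \eqref{TG} directly.)

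Next I would expand, in terms of $a$, $b$ and $c$, the three terms subtracted from $B_{n,p}(v,h)$ in the definition of $\bar B_{n,p}(v,h)$, using $I^{\eta}w=T^{\eta}w-w$: one obtains $I^{\eta}|D^nv|^p=|a|^p-|D^nv|^p$, $\;pT^{\eta}|D^nv|^{p-2}T^{\eta}v_{\alpha}G^{(\alpha)}(v)=p|a|^{p-2}\,a\cdot b$, and $\;pI^{\eta}(|D^nv|^{p-2}v_{\alpha})\,h_{\alpha}=p|a|^{p-2}\,a\cdot c-p|D^nv|^{p-2}\sum_{|\alpha|=n}v_{\alpha}h_{\alpha}$. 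Substituting these into the definition of $\bar B_{n,p}(v,h)$, the terms $|D^nv|^p$ and $p|D^nv|^{p-2}\sum_{|\alpha|=n}v_{\alpha}h_{\alpha}$ cancel, leaving
\[
\bar B_{n,p}(v,h)=|a+b+c|^p-|a|^p-p|a|^{p-2}\,a\cdot(b+c),
\]
that is, $\bar B_{n,p}(v,h)$ is exactly a second-order Taylor remainder of $y\mapsto|y|^p$ at $y=a$ with increment $w:=b+c$.

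Finally I would use the elementary inequality, valid for $p\ge2$ and arbitrary vectors $y,w$,
\[
\bigl|\,|y+w|^p-|y|^p-p|y|^{p-2}\,y\cdot w\,\bigr|\le N(p)\bigl(|y|^{p-2}|w|^2+|w|^p\bigr),
\]
which follows from Taylor's formula with integral remainder, the Hessian bound $|D^2(|\cdot|^p)(z)|\le p(p-1)|z|^{p-2}$, and $(|y|+|w|)^{p-2}\le 2^{p-2}(|y|^{p-2}+|w|^{p-2})$. Applying it with $w=b+c$, using $|w|^2\le 2(|b|^2+|c|^2)$ and $|w|^p\le 2^{p-1}(|b|^p+|c|^p)$, and substituting back $|a|^{p-2}=T^{\eta}|D^nv|^{p-2}$, $|b|=|G_n(v)|$, $|c|=|D^nh|$, gives exactly the asserted bound, with $N$ depending only on $p$, $d$ and $m$ (the latter only through the combinatorial constants entering $G^{(\alpha)}$ for $|\alpha|=n\le m$).

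The only step requiring genuine care is the bookkeeping of the cancellation in the second paragraph, i.e. keeping the summation convention over multi-numbers of length $n$ consistent throughout. A minor additional point: for $2\le p<4$ the Hessian $D^2(|\cdot|^p)$ contains a formally singular term of the form $|z|^{p-4}zz^{\top}$, which is nonetheless harmless since $|z|^{p-4}|zz^{\top}|\le|z|^{p-2}$, so the Hessian estimate above — and hence the whole argument — remains valid.
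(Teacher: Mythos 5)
Your proof is correct and follows essentially the same route as the paper's: the paper also sets $\mathbf{x}_{\alpha}=T^{\eta}v_{\alpha}$, $\mathbf{y}_{\alpha}=G^{(\alpha)}(v)+h_{\alpha}$ (your $a$ and $b+c$), identifies $\bar B_{n,p}$ with the second-order Taylor remainder $|\mathbf{x}+\mathbf{y}|^p-|\mathbf{x}|^p-p|\mathbf{x}|^{p-2}\mathbf{x}\cdot\mathbf{y}$ via \eqref{TG} and the identity $I^{\eta}w=T^{\eta}w-w$, and then invokes the same elementary bound $|\,|y+w|^p-|y|^p-p|y|^{p-2}y\cdot w\,|\le N(|y|^{p-2}|w|^2+|w|^p)$. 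Your write-up merely spells out the bookkeeping of the cancellation and the small-$p$ Hessian remark more explicitly than the paper does, but the argument is the same.
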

\begin{proof}
Let ${\bf x}=({\bf x}_{\alpha})$ and ${\bf y}=({\bf y}_{\alpha})$ 
denote the vectors with coordinates 
$$
{\bf x}_{\alpha}:=T^{\eta}v_{\alpha},  
\quad
{\bf y}_{\alpha}:=G^{(\alpha)}+h_{\alpha} 
$$
for multi-numbers $\alpha$ of length $n$. Then taking into account \eqref{TG} we have 
$$
|{\bf x}+{\bf y}|^p=|D^{n}(T^{\eta}v+h)|^{p},\quad |{\bf x}|^p=T^{\eta}|D^nv|^p, 
$$
and 
by Taylor's formula there is a constant $N=N(d,p,n)$ such that 
$$
0\leq|{\bf x}+{\bf y}|^p-|{\bf x}|^p-p|{\bf x}|^{p-2}{\bf x}_{\alpha}{\bf y}_{\alpha}\leq 
N(|{\bf x}|^{p-2}|{\bf y}|^2+|{\bf y}|^p). 
$$
Hence writing $G$ for the vector $(G^{(\alpha)})_{|\alpha|=n}$  and noticing 
$$ 
T^{\eta}|D^nv|^{p-2}T^{\eta}v_{\alpha}-|D^nv|^{p-2}v_{\alpha}
=I^{\eta}(|D^nv|^{p-2}v_{\alpha}), 
$$
we get 
\begin{equation}                                                     \label{BB}
\bar B_{n,p}(v,h)
\leq
NT^{\eta}|D^nv|^{p-2}|G|^{2}
+NT^{\eta}|D^nv|^{p-2}|D^nh|^2+N|G|^p+N|D^nh|^p
\end{equation}
with a constant $N=N(d,n,p)$. 
\end{proof}
\begin{lemma}                                                           \label{lemma HK}
Let Assumption \ref{assumption eta} hold. Then for all $\omega\in\Omega$, 
$t\in[0,T]$ and integers $n\in[0,m]$ we have  
\begin{equation}                                                        \label{HKestimate2}
H_{n,2}(v,h)\leq 
N(|v|^{2}_{W^n_2}+|h|^2_{W^{n+1}_2(\cL_{2})}),\quad 
K_{n,2}(v,h)
\leq |h|^2_{W^{n}_2(\cL_2)}
\end{equation}
for $v\in W^{n+2}_2$ and $h\in W^{n+1}_2(\cL_2)$ 
with a constant $N=N(d,m,K,K_{\eta})$. For $p\in[4,\infty)$ and integers 
$n\in[0,m]$ 
\begin{equation}                                                        \label{HKestimate}
H_{n,p}(v,h)
\leq N(|v|^{p}_{W^n_p}+|h|^p_{W^{n+2}_p(\cL_{p,2})}), 
\quad 
K_{n,p}(v,h)
\leq N(|v|^{p}_{W^n_p}+|h|^p_{W^{n+2}_p(\cL_p)})
\end{equation}
for all $v\in W^{n+2}_p$, $h\in W^{n+2}_p(\cL_{p,2})$, $\omega\in\Omega$ 
and $t\in[0,T]$ with a constant $N=N(p,d,m,K,K_{\eta})$. 
\end{lemma}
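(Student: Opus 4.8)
The plan is to reduce both estimates to one mechanism. By a mollification/density argument (using Lemma~\ref{lemma e} and the continuity of the quantities involved in the norms appearing in the statement) we may assume $v,h\in C_0^\infty$. After writing $T^\eta=\mathrm{Id}+I^{\eta}$ in the definition of $K_{n,p}$, both $H_{n,p}$ and the ``$I^\eta$-part'' of $K_{n,p}$ are of the form $\int_Z\big|\int_{\bR^d}I^{\eta_{t,z}}\Phi(x)\,\Psi(x,z)\,dx\big|\,\mu(dz)$ with $\Phi$ built from $D^nv$ and $\Psi$ from the $h$-data. The heart of the argument is to use Taylor's formula \eqref{taylor1}, $I^{\eta_{t,z}}\Phi(x)=\int_0^1\eta^i_{t,z}(x)(D_i\Phi)(\tau_{\theta\eta}(x))\,d\theta$ with $\tau_{\theta\eta}(x)=x+\theta\eta_{t,z}(x)$ as in \eqref{tau0}, then, for each fixed $\theta$, to change variables by the $C^{\bar m}$-diffeomorphism $x\mapsto\tau_{\theta\eta}(x)$ (Lemma~\ref{lemma diff2} gives uniform two-sided bounds on $|\det D\tau_{\theta\eta}^{\pm1}|$ and on the derivatives of $\tau_{\theta\eta}$ and $\tau_{\theta\eta}^{-1}$ up to order $\bar m\ge 3$), and to integrate by parts in the new variable to move $D_i$ off $\Phi$.

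This manoeuvre does two things. First, the derivative is transferred onto the $h$-data, so that $D^{\le n+1}h$ — hence the hypothesis $h\in W^{n+2}_p$ — enters; concretely for $H_{n,p}$ one gets $|D^nh|+|D^{n+1}h|$ in place of $h$, and for the $I^\eta$-part of $K_{n,p}$ one gets $|D^nh|\,|D^{n+1}h|$ in place of $|D^nh|^2$ (together with $|D^nv|^{p-2}$ instead of $|D^nv|^{p-1}$). Second — and this is where the decay is extracted — after straightening the shift the integration by parts only produces the factor $\eta^i_{t,z}\circ\tau_{\theta\eta}^{-1}$, its derivative, and the derivative of $|\det D\tau_{\theta\eta}^{-1}|$; by Assumption~\ref{assumption eta}, which bounds $\eta$ \emph{and all its spatial derivatives up to order $\bar m$} by $\bar\eta(z)$, together with Lemma~\ref{lemma diff2}, each of these is $\le N\bar\eta(z)$ with $N=N(K,d,m)$. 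Undoing the change of variables yields a pointwise-in-$(\omega,t,z)$ estimate such as
\[
\Big|\int_{\bR^d}I^{\eta_{t,z}}(|D^nv|^{p-2}v_\alpha)\,h_\alpha(\cdot,z)\,dx\Big|
\le N\bar\eta(z)\int_0^1\!\!\int_{\bR^d}|D^nv(\tau_{\theta\eta}(x))|^{p-1}\big(|D^nh|+|D^{n+1}h|\big)(x,z)\,dx\,d\theta .
\]

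It remains to integrate in $z$. One applies the Cauchy–Schwarz inequality in $z$, putting $\bar\eta(z)$ on one side so that $\int_Z\bar\eta^2\,d\mu=K_\eta^2$ appears, together with Hölder's inequality in $x$ with exponents $\tfrac p{p-1},p$ (the $v$-factor then becomes $|D^nv|^{p-1}_{L_p}$ after the change-of-variables bound on $\tau_{\theta\eta}$) and a change of variables back; after a Young step this bounds $H_{n,p}$ by $N\big(|v|^p_{W^n_p}+$ (a suitable mixed $L_p(\bR^d)$–$\cL_2(Z)$ norm of $D^{\le n+1}h)^{p}\big)$, and likewise for the $I^\eta$-part of $K_{n,p}$. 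The ``diagonal'' part of $K_{n,p}$, namely $\int_Z\int_{\bR^d}|D^nv|^{p-2}|D^nh(x,z)|^2\,dx\,\mu(dz)=\int_{\bR^d}|D^nv|^{p-2}\,|D^nh(x,\cdot)|^2_{\cL_2}\,dx$, is handled directly by Hölder ($\tfrac p{p-2},\tfrac p2$) and Young, which already gives the full estimate when $p=2$ (there $|D^nv|^{p-2}\equiv1$, $I^\eta$ of a constant is $0$, and the mixed norm collapses by Fubini to $|D^{\le n+1}h|_{L_2(\bR^d\times Z)}\le|h|_{W^{n+1}_2(\cL_2)}$ — exactly the stated $\cL_2$-form). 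For $p\ge4$ one checks that these mixed norms of $D^{\le n+1}h$ are controlled by $|h|_{W^{n+2}_p(\cL_{p,2})}$, using that $|\cdot|_{\cL_{p,2}}$ dominates $|\cdot|_{\cL_2}$ (and the two spare derivatives of $h$ in a Minkowski–Hölder/interpolation argument); the case $h=0$ is trivial since then every term vanishes.

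The step demanding real care — the main obstacle — is the $\bar\eta(z)$-bookkeeping in the integration by parts: one must check that, after straightening the shift, \emph{every} factor that gets differentiated carries a full power of $\bar\eta(z)$, which is precisely where one needs all spatial derivatives of $\eta$ (not merely $\eta$ itself) to be dominated by $\bar\eta$, and the inverse diffeomorphism to have controlled derivatives via Lemma~\ref{lemma diff2}; without the complete factor $\bar\eta(z)$ the $z$-integral diverges whenever $\mu(Z)=\infty$. The secondary delicate point is matching the resulting mixed $L_p(\bR^d)$–$\cL_2(Z)$ norm of $D^{\le n+1}h$ (for $p=2$ simply $L_2(\bR^d\times Z)$) to the norm in the statement; here for $p\ge4$ the $\cL_2$-component of $\cL_{p,2}$ cannot be dispensed with, because that mixed norm is controlled by the $L_p(\bR^d;\cL_2)$-norm of $D^{\le n+1}h$ but not, in general, by the plain $L_p(\bR^d\times Z)$-norm alone.
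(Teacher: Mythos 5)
Your plan --- a single first-order Taylor expansion of $I^\eta$, one change of variables, one integration by parts, and then Cauchy--Schwarz in $z$, for all $p$ --- works for $p=2$ (where it coincides with the paper's own argument), but it has a genuine gap for $p\ge4$. After integrating by parts in the straightened variable $y=\tau_{\theta\eta_{t,z}}(x)$, you land on an estimate of the shape
$$
\Big|\int_{\bR^d} I^{\eta_{t,z}}\Phi\cdot h_\alpha\,dx\Big|
\le N\bar\eta(z)\int_0^1\!\!\int_{\bR^d}\big|D^nv\big(\tau_{\theta\eta_{t,z}}(x)\big)\big|^{p-1}\big(|D^nh|+|D^{n+1}h|\big)(x,z)\,dx\,d\theta,
$$
in which the $v$-factor \emph{still depends on $z$} through $\tau_{\theta\eta_{t,z}}$. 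With only one power of $\bar\eta$ available, neither order of Cauchy--Schwarz in $z$ and H\"older in $x$ closes the estimate when $p>2$. If you do Cauchy--Schwarz in $z$ at each fixed $x$, the $v$-side becomes $\big(\int_Z\bar\eta^2(z)|D^nv(\tau_{\theta\eta_{t,z}}(x))|^{2(p-1)}\mu(dz)\big)^{1/2}$, which does not factor into $K_\eta|D^nv(x)|^{p-1}$ because $\tau$ shifts $x$ differently for each $z$. If instead you first use H\"older in $x$ (via $|D^nv\circ\tau_{\theta\eta_{t,z}}|_{L_p}\le N|D^nv|_{L_p}$, uniformly in $z$) and then Cauchy--Schwarz in $z$, the $h$-data comes out in the norm $\|D^kh\|_{\cL_2(L_p)}$; but for $p>2$ Minkowski's inequality gives $\|\cdot\|_{L_p(\cL_2)}\le\|\cdot\|_{\cL_2(L_p)}$, \emph{not} the reverse, and the reverse is genuinely false: taking $h(x,n)=c_n\chi_{A_n}(x)$ with disjoint $A_n\subset(0,1)$, $|A_n|=n^{-3/2}$, $c_n=n^\beta$ with $-\tfrac12+\tfrac3{2p}\le\beta<\tfrac1{2p}$ gives $h\in L_p(\cL_p\cap\cL_2)$ while $\|h\|_{\cL_2(L_p)}=\infty$. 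So that mixed norm is not controlled by $|h|_{W^{n+2}_p(\cL_{p,2})}$, and the spare derivatives of $h$ do not help, since they add $x$-regularity only and leave the $\cL_2(L_p)$ versus $L_p(\cL_2)$ mismatch untouched.

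This is exactly why the paper does \emph{not} use $I^\eta$ directly for $p\ge 4$ but splits $I^\eta=J^\eta+\eta^iD_i$. The $J^\eta$ piece, after a second-order Taylor expansion, the change of variables, and two integrations by parts, carries $\bar\eta^2(z)$; a single \emph{joint} H\"older inequality in $(x,z)$ against $dx\otimes\bar\eta^2(z)\mu(dz)$ then puts $\bar\eta^{2(p-1)/p}$ with the $v$-factor (yielding $K_\eta^{2(p-1)/p}|D^nv|^{p-1}_{L_p}$) and $\bar\eta^{2/p}$ with the $h$-factor (absorbed via $\bar\eta\le K$), which is the source of the $W^{n+2}_p(\cL_p)$-norm of $h$ in \eqref{HKestimate}. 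The $\eta^iD_i$ piece carries only one $\bar\eta$, but the paper integrates it by parts \emph{in the original variable}, with no change of variables, so the $v$-side picks up no $z$-dependence and Cauchy--Schwarz in $z$ at each fixed $x$ cleanly yields $|D^kh(x,\cdot)|_{\cL_2}$, then H\"older in $x$ gives the $W^{n+1}_p(\cL_2)$-norm. The same dichotomy is behind the $K^{(0)}_p,K^{(1)}_p,K^{(2)}_p$ split for $K_{n,p}$. Your $p=2$ case is fine because there $\cL_2(L_2)=L_2(\cL_2)$ by Fubini --- precisely why \eqref{HKestimate2} needs only $W^{n+1}_2(\cL_2)$.
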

\begin{proof}
The second estimate in \eqref{HKestimate2} is obvious.  
By Taylor's formula, Fubini's theorem and by change of variables we have 
$$
H_{n,2}(h,v)=\int_{Z}\Big|\int_{\bR^d}\int_0^1
T^{\theta\eta}v_{\alpha i}(x)\eta^i_{t,z}(x)h_{\alpha}(x,z)\,d\theta\,dx\Big|\,\mu(dz)
$$
$$
\leq \int_0^{1}\int_{Z}\Big|\int_{\bR^d}
v_{\alpha i}(x)\varrho^{\alpha i}_{t,\theta,z}(x)\,dx\Big|\,\mu(dz)\,d\theta
$$
with 
$$
\varrho^{\alpha i}_{t,\theta,z}(x)
=h_{\alpha}(\tau^{-1}_{t,\theta,z}(x),z)
\eta^i_{t,z}(\tau^{-1}_{t,\theta,z}(x),z)|D\tau^{-1}_{t,\theta,z}(x)|, 
$$ 
where 
$\tau^{-1}_{t,\theta,z}(\cdot)$ is the inverse of the diffeomorphism 
$\tau_{t,\theta,z}(x)=x+\theta\eta_{t,z}(x)$. Hence by integration by parts we obtain 
$$
H_{n,2}(h,v)\leq \int_0^{1}\int_{Z}\Big|\int_{\bR^d}
v_{\alpha}(x)D_i\varrho^{\alpha i}_{t,\theta,z}(x)\,dx\Big|\,\mu(dz)\,d\theta. 
$$
Due to Assumption \ref{assumption eta}, Lemma \ref{lemma diff}, 
the Cauchy-Schwarz and H\"older's inequalities   
there is a constant $N=N(d,K,m)$ such that
$$
|D_i\varrho^{\alpha i}_{t,\theta,z}(x)|
\leq N \bar{\eta}\big(|D^nh|(\tau^{-1}_{t,z,\theta}(x),z)
+|D^{n+1}h|(\tau^{-1}_{t,z,\theta}(x),z)\big).
$$
Hence by H\"older's inequality 
$$
H_{n,2}(v,h) \leq N|\bar\eta D^nv|_{\cL_2(L_2)}
|\sum_{k=n}^{n+1}|D^{k}h|_{\cL_2(L_2)}
\leq NK_{\eta}|D^nv|_{L_2}|h|_{W^{n+1}_2(\cL_2)}
$$
with $N=N(K,d,m)$, which proves the first inequality in \eqref{HKestimate2}. 
Let $p\geq4$. Then 
$$
H_{n,p}(v,h)\leq \sum_{j=0}^1H^{(j)}_p(v,h)\quad\text{and}
\quad
K_{n,p}(v,h)\leq\sum_{j=0}^2K^{(j)}_p(v,h) 
$$
with
$$
H^{(0)}_p(v,h)
=\int_{Z}\Big|\int_{\bR^d}
J^{\eta}(|D^nv(x)|^{p-2}v_{\alpha})h_{\alpha}(x,z)\,dx\Big|\,\mu(dz), 
$$
$$
H^{(1)}_p(v,h)
=\int_{Z}\Big|\int_{\bR^d}
D_i(|D^nv(x)|^{p-2}v_{\alpha})\eta^ih_{\alpha}(x,z)\,dx\Big|\,\mu(dz) 
$$
and
$$
K^{(0)}_p(v,h)
=\int_{Z}\Big|\int_{\bR^d}
J^{\eta}|D^nv(x)|^{p-2}|D^nh(x,z)|^2\,dx\Big|\,\mu(dz), 
$$
$$
K^{(1)}_p(v,h)
=\int_{Z}\Big|\int_{\bR^d}
D_i|D^nv(x)|^{p-2}\eta^i|D^nh(x,z)|^2\,dx\Big|\,\mu(dz),  
$$
$$
K^{(2)}_p(v,h)
=\int_{Z}\int_{\bR^d}
|D^nv(x)|^{p-2}|D^nh(x,z)|^2\,dx\,\mu(dz).  
$$
By Fubini's theorem and H\"older's inequality 
$$ 
K^{(2)}_p(v,h)
\leq \int_{\bR^d}|D^nv(x)|^{p-2}||D^nh(x,\cdot)|^2_{\cL_2}\,dx
\leq |v|^{p-2}_{W^n_p}|h|^2_{W^n_p(\cL_2)}. 
$$
Since $p\geq4$, by Taylor's formula, Fubini's theorem, change 
of variables, integration by parts and using Assumption \ref{assumption eta} 
we get 
$$
H_p^{(0)}(v,h)\leq 
\int_0^1\int_{Z}\Big|\int_{\bR^d}(1-\theta)T^{\theta\eta}
(|D^nv(x)|^{p-2}v_{\alpha}(x))_{ij}\eta^i\eta^jh_{\alpha}\,dx\Big|\,\mu(dz)\,d\theta
$$
$$
\leq N \int_0^1\int_{Z}\int_{\bR^d}\bar\eta^2(z)
|D^nv|^{p-1}\sum_{k=n}^{n+2}|D^kh|(\tau^{-1}(x),z)\,dx\,\mu(dz)\,d\theta
$$
with $N=N(d,p,m,K)$. Hence by H\"older's inequality, change of variables, Fubini's theorem 
and using $|\bar\eta|\leq K$ we obtain 
\begin{equation}                                                               \label{H0}
H_p^{(0)}(v,h)\leq N
|\bar\eta^{2/p}|D^nv||^{p-1}_{\cL_p(L_p)}
\sum_{k=n}^{n+2}|D^kh|_{\cL_p(L_p)}
\leq NK_{\eta}^{2(p-1)/p}|D^nv|^{p-1}_{L_p}|h|_{W^{n+2}_p(\cL_p)}. 
\end{equation}
Similarly, with a constant $N=N(K,d,m,p,K_{\eta})$ we have
\begin{equation}                                                         \label{K0}
K_p^{(0)}(v,h)\leq N|D^nv|^{p-2}_{\cL_p(L_p)}
|h|^2_{W^{n+2}_p(\cL_p)}. 
\end{equation}
By integration by parts, using Assumption \ref{assumption eta}, 
Cauchy-Schwarz and H\"older inequalities 
we get 
$$
H_p^{(1)}(v,h)=\int_{Z}\Big|\int_{\bR^d}
|D^nv(x)|^{p-2}v_{\alpha}D_i(\eta^ih_{\alpha}(x,z))\,dx\Big|\,\mu(dz) 
$$
$$
\leq \int_{Z}\int_{\bR^d}
|D^nv(x)|^{p-1}\bar\eta(z)\sum_{k=n}^{n+1}|D^kh(x,z)|\,dx\,\mu(dz) 
$$
\begin{equation}                                                                   \label{K1}
\leq K_{\eta}\int_{\bR^d}
|D^nv(x)|^{p-1}\sum_{k=n}^{n+1}|D^kh(x,\cdot)|_{\cL_2}\,dx\,\mu(dz)
\leq  K_{\eta}|v|^{p-1}_{W^n_p}|h|_{W_p^{n+1}(\cL_2)}
\end{equation}
Similarly we have 
$$
K_p^{(1)}(v,h)\leq |v|^{p-2}_{W^n_p}|h|_{W^{n+1}_p(\cL_2)}. 
$$
Combining this with \eqref{H0} through \eqref{K1} and using Young's inequality 
we get \eqref{HKestimate}. 
\end{proof}
\begin{proof}[Proof of Proposition \ref{proposition hat}]
Set
$$ 
A:=p|D^nv|^{p-2}
\sum_{|\alpha|=n}v_{\alpha}\big((J^{\eta}v)_{\alpha}-(I^{\eta}v)_{\alpha}\big).  
$$
Then by Lemma \ref{lemma estimate}
$$
A=J^{\eta}|D^nv|^p-I^{\eta}|D^nv|^p
$$ 
\begin{equation}                                                 \label{FG}
-p|D^nv|^{p-2}\sum_{1\leq|\alpha|=n}
\sum_{\kappa_0\sqcup\kappa_1=s_{\alpha}}\eta^i_{\alpha(\kappa_1)}
v_{\alpha}v_{\alpha(\kappa_0)i}. 
\end{equation}
By Lemma \ref{lemma B} for 
$$
B=|D^{n}(T^{\eta}v+h)|^{p}-|D^nv|^p
-p|D^nv|^{p-2}\sum_{|\alpha|=n}v_\alpha h_\alpha, 
$$
we have
$$
B\leq I^{\eta}|D^{n}v|^p
+pT^{\eta}|D^nv|^{p-2}T^{\eta}v_{\alpha}G^{(\alpha)}
+pI^{\eta}(|D^nv|^{p-2}v_\alpha) h_\alpha
$$
$$
+NT^{\eta}|D^nv|^{p-2}|G|^{2}+NT^{\eta}|D^nv|^{p-2}|D^nh|^2+N|G|^p+N|D^nh|^p
$$
with a constant $N=N(d,n,p)$. Thus introducing the notations 
$$
\tilde G^{(\alpha)}:=G^{(\alpha)}-\sum_{\kappa_0\sqcup\kappa_1=s_{\alpha}}
\eta^{i}_{\alpha(\kappa_1)}
T^{\eta}v_{\alpha(\kappa_0)i}
$$
and
$$
R:=T^{\eta}|D^nv|^{p-2}|G|^{2}+|G|^p+|D^nh|^p
+T^{\eta}|D^nv|^{p-2}T^{\eta}v_{\alpha}\tilde G^{(\alpha)}, 
$$
we have 
$$
B\leq I^{\eta}|D^{n}v|^p+pT^{\eta}|D^nv|^{p-2}\sum_{|\alpha|=n}
\sum_{\kappa_0\sqcup\kappa_1=s_{\alpha}}
\eta^{i}_{\alpha(\kappa_1)}
T^{\eta}v_{\alpha}T^{\eta}v_{\alpha(\kappa_0)i}
$$
$$
+pI^{\eta}(|D^nv|^{p-2}v_{\alpha})h_{\alpha}
+NT^{\eta}|D^nv|^{p-2}|D^nh|^2+NR  
$$
with a constant $N=N(d,n,p)$. 
Combining this with equation \eqref{FG} and noticing  that 
$$
T^{\eta}|D^nv|^{p-2}
T^{\eta}v_{\alpha}
T^{\eta}v_{\alpha(\kappa_0)i}-|D^nv|^{p-2}
v_{\alpha}v_{\alpha(\kappa_0)i}
=I^{\eta}(|D^nv|^{p-2}v_{\alpha}v_{\alpha(\kappa_0)i}) 
$$
we obtain 
$$
A+B=J^{\eta}(|D^nv|^p)
+p\sum_{|\alpha|=n}
\sum_{\kappa_0\sqcup\kappa_1=s_{\alpha}}\eta^i_{\alpha(\kappa_1)}
I^{\eta}(|D^nv|^{p-2}v_{\alpha}v_{\alpha(\kappa_0)i})
$$
\begin{equation}                                                                                            \label{A+B}
+p\sum_{|\alpha|=n}h_{\alpha}I^{\eta}(|D^nv|^{p-2}v_{\alpha})
+NT^{\eta}|D^nv|^{p-2}|D^nh|^2+NR.
\end{equation}
By Lemma \ref{lemma intIJ} (i) and (ii) 
we have a constant $N=N(K,d)$ such that 
\begin{equation}                                                                                              \label{0}
\int_Z\int_{\bR^d}J(|D^nv|^p)\,dx\,\mu(dz)
\leq N|D^nv|_{L_p}^p\int_Z\bar\eta^2\,\mu(dz), 
\end{equation}
and
\begin{equation}                                                                                              \label{0.5}
\int_Z\int_{\bR^d}\eta^i_{\alpha(\kappa_1)}
I(|D^nv|^{p-2}v_{\alpha}v_{\alpha(\kappa_0)i})\,dx
\leq N|D^nv|_{L_P}^p\int_Z\bar\eta^2\,\mu(dz). 
\end{equation}
Due to Assumption \ref{assumption eta}, taking into account that $p\geq2$ and 
using Young's inequality we get 
$$
|R|\leq N\bar\eta^2\sum_{1\leq k\leq n}T^{\eta}|D^kv|^{p}+|D^nh|^p,  
$$
$$                                                                                            
\int_{Z}\int_{\bR^d}R\,dx\,\mu(dz)
\leq N\int_{Z}\bar\eta^2(z)\,\mu(dz)|v|^p_{W^n_p}+|D^nh|^p_{L_p(\cL_p)}  
$$
with a constant $N=N(d,n,p)$.  Combining this with 
estimates \eqref{A+B} through \eqref{0.5} and using Lemma \ref{lemma HK} 
we finish the proof of the proposition. 
\end{proof}
Introduce also the expressions 
\begin{equation*}                                                                                         \label{DM1}
{\frak P}^2_{n,p}(t,v,g)
:=\sum_{r=1}^{\infty}(p|D^nv|^{p-2}D_\alpha v,D_\alpha(\cM^rv+g^r))^2
\end{equation*}
\begin{equation*}                                                                                           \label{DM2}
{\frak R}^2_{n,p}(t,v,h)
:=\int_Z
\big(p|D^nv|^{p-2}v_\alpha, (I^{\eta}v)_{\alpha}+h_{\alpha}\big)^2\,\mu(dz)
\end{equation*}
\begin{align}                                                                                           \label{DM3}
{\frak Q}_{n,p}(t,v,h)
:=\int_Z\int_{\bR^d}&\{|D^n(v+I^{\eta}v+h)|^{p}
-|D^nv|^p    \nonumber\\
&
-p|D^nv|^{p-2}D_\alpha v
D_\alpha(I^{\eta}v+h)\}\,dx\,\mu(dz)  
\end{align}
for $v\in W^{m+1}_p$, $g\in W^{m+1}_p(l_2)$, $h\in W^{m+1}_p(\cL_{p,2})$, 
$\omega\in\Omega$ and $t\in[0,T]$, where repeated indices $\alpha$ 
mean summation over all multi-numbers of length $n$. 
\begin{proposition}                                                               \label{proposition DM}
Let $n\in[0,m]$ be 
an integer and $p\in[2,\infty)$.  Then the following estimates hold
for all $(\omega,t)\in\Omega\times[0,T]$.
\begin{enumerate}[(i)]
\item
If Assumption \ref{assumption L} is satisfied then 
\begin{equation}                                                                   \label{DM1estimate}
{\frak P}^2_{n,p}(t,v,g)
\leq N(|v|^{2p}_{W^n_p}+|v|^{2p-2}_{W^n_p}|g|^{2}_{W^{n}_p})
\end{equation}
for all $v\in W^{n+1}_p$ and $g\in W^{n+1}$, 
with a constant $N=N(d,m,p,K)$. 
\item
If Assumption \ref{assumption eta} is satisfied 
then 
\begin{equation}                                                                   \label{DM3estimate}
{\frak Q}_{n,p}(t,v,h)
\leq N(|v|^{p}_{W^{n+1}_p}
+|h|^{p}_{W^n_{p,2}})
\end{equation}
for all $v\in W^{n+1}_p$ and $h\in W^{n}_p(\cL_{p,2})$ 
with $N=N(d,m,p,K,K_{\eta})$.   
\end{enumerate}
\end{proposition}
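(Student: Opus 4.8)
Proof plan for Proposition \ref{proposition DM}.

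The plan is to establish the two estimates separately. For \eqref{DM1estimate} I would expand $\cM^r$ by the Leibniz rule, integrate by parts in the single top‑order term, and then control the sum over $r$ by Minkowski's integral inequality together with H\"older's inequality. For \eqref{DM3estimate} I would recognise the $x$‑integrand as a second‑order Taylor remainder of the convex function $|\cdot|^p$, bound it from above, and estimate $D^n(I^{\eta}v+h)$ only \emph{after} pulling out the factor $\bar\eta(z)$ that guarantees $\mu$‑integrability.

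For (i), write $D_\alpha(\cM^r_tv+g^r)=\sigma^{ir}_tD_iD_\alpha v+\cR^r_\alpha$, where, by the Leibniz rule, $\cR^r_\alpha$ contains at most $n$ derivatives of $v$ with coefficients that are derivatives of $\sigma^{\cdot r},\beta^r$ of orders between $1$ and $\lceil m\rceil+1$, together with the component $D_\alpha g^r$. Summing the leading term against $p|D^nv|^{p-2}D_\alpha v$ over $|\alpha|=n$ gives $\sigma^{ir}_tD_i|D^nv|^p$, so its contribution to the inner product, after integration by parts (legitimate since $|D^nv|^p\in W^1_1$ when $v\in W^{n+1}_p$), equals $-\int(D_i\sigma^{ir})|D^nv|^p\,dx$. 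Hence, with $\rho^r(x)$ denoting the sum of the absolute values of the relevant derivatives of $\sigma^{\cdot r},\beta^r$ at $x$,
$$\big|(p|D^nv|^{p-2}D_\alpha v,\,D_\alpha(\cM^rv+g^r))\big|\le\int|D_i\sigma^{ir}||D^nv|^p\,dx+Np\int|D^nv|^{p-1}\Big(\rho^r\sum_{k\le n}|D^kv|+|D^ng^r|\Big)dx.$$
I would then square, split the second integral into its $\rho^r$‑part and its $D^ng^r$‑part, use $(a+b+c)^2\le3(a^2+b^2+c^2)$, and apply Minkowski's integral inequality to move the $l_2$‑norm over $r$ inside the $x$‑integral: Assumption \ref{assumption L} gives $\sum_r(\sum_iD_i\sigma^{ir})^2\le N(d,m,K)$ and $\sum_r\rho^r(x)^2\le N(d,m,K)$, while $\sum_r|D^ng^r|^2=|D^ng|_{l_2}^2$. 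H\"older's inequality with exponents $p/(p-1)$ and $p$ then yields \eqref{DM1estimate}, the first two summands producing $|v|^{2p}_{W^n_p}$ and the last producing $|v|^{2p-2}_{W^n_p}|g|^2_{W^n_p}$. (As with \eqref{estimate L}, this estimate is also contained in the analysis of \cite{GGK}.)

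For (ii), observe that the $x$‑integrand of ${\frak Q}_{n,p}$ is exactly the second‑order Taylor remainder of $|\cdot|^p$, on the Euclidean space of vectors indexed by multi‑numbers of length $n$, at the point $D^nv$ in the increment direction $D^n(I^{\eta}v+h)$; for $p\ge2$ it is nonnegative and bounded by $N\big(|D^nv|^{p-2}|D^n(I^{\eta}v+h)|^2+|D^n(I^{\eta}v+h)|^p\big)$. I would then split $D^n(I^{\eta}v+h)$ into the $I^{\eta}v$‑part and the $h$‑part. For the former, using \eqref{TG} to write $(I^{\eta}v)_\alpha=I^{\eta}v_\alpha+G^{(\alpha)}(v)$, the Taylor formula \eqref{taylor1} for $I^{\eta}v_\alpha$, and the structure \eqref{Ga} of $G^{(\alpha)}$ (where $\kappa_0\sqcup\dots\sqcup\kappa_k=s_\alpha$ with $\kappa_j\ne\emptyset$ for $j\ge1$ forces each shifted derivative of $v$ to have order $\le n$ and at least one $\eta$‑factor of order $\ge1$), one extracts a single factor $\bar\eta(z)$, bounding the remaining $\eta$‑derivatives by $K$; \emph{only then} would I apply H\"older in $x$ with exponents $p/(p-2)$ and $p/2$, followed by the change of variables $y=x+\theta\eta_{t,z}(x)$, whose Jacobian determinant is bounded below by $K^{-1}$ in absolute value by Assumption \ref{assumption eta}. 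This gives, for each $z$,
$$\int_{\bR^d}|D^nv|^{p-2}|D^n(I^{\eta}v)|^2\,dx\le N\bar\eta^2(z)|v|^p_{W^{n+1}_p},\qquad\int_{\bR^d}|D^n(I^{\eta}v)|^p\,dx\le N\bar\eta^p(z)|v|^p_{W^{n+1}_p},$$
and since $\bar\eta^p\le K^{p-2}\bar\eta^2$ and $\int_Z\bar\eta^2\,\mu(dz)=K_\eta^2$, integration in $z$ is harmless. For the $h$‑part, Fubini's theorem turns $\int_Z|D^nh|^2\,\mu(dz)$ into $|D^nh(x,\cdot)|^2_{\cL_2}$ and $\int_Z|D^nh|^p\,\mu(dz)$ into $|D^nh(x,\cdot)|^p_{\cL_p}$, after which H\"older gives $|v|^{p-2}_{W^n_p}|h|^2_{W^n_p(\cL_2)}$ and $|h|^p_{W^n_p(\cL_p)}$. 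Young's inequality together with $|h|_{W^n_p(\cL_2)},|h|_{W^n_p(\cL_p)}\le|h|_{W^n_{p,2}}$ then collapses everything into $N(|v|^p_{W^{n+1}_p}+|h|^p_{W^n_{p,2}})$.

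The main obstacle, and the reason the order of operations in (ii) matters, is that $\mu$ is only $\sigma$‑finite: one cannot bound $|D^nv|^{p-2}|D^n(I^{\eta}v)|^2$ by $|D^nv|^p+|D^n(I^{\eta}v)|^p$ and integrate term by term, nor apply H\"older in $x$ before extracting $\bar\eta(z)$, since either shortcut produces a term such as $\int_Z\bar\eta^{4/p}\,\mu(dz)$ or $\mu(Z)\,|v|^p_{W^n_p}$, which need not be finite. Keeping the weight $|D^nv|^{p-2}$ throughout and pulling out the one power of $\bar\eta(z)$ before invoking H\"older and the change of variables is precisely what forces convergence; the combinatorial bookkeeping for $G^{(\alpha)}(v)$ is the other point requiring care. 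Part (i) is routine once Minkowski's integral inequality is brought in.
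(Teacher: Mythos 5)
Your proof is correct, and part (i) coincides with the paper's argument almost line for line: isolate the leading term via $p|D^nv|^{p-2}v_\alpha\sigma^{ir}D_i v_\alpha=\sigma^{ir}D_i|D^nv|^p$, integrate by parts, and control everything with Minkowski's integral inequality and H\"older. For part (ii) both proofs start from the same Taylor-remainder bound $0\le A\le N\big(|D^nv|^{p-2}|D^n(I^\eta v+h)|^2+|D^n(I^\eta v+h)|^p\big)$ and both land on the same three quantities to estimate; the only stylistic difference is that the paper applies H\"older in $x$ first and then invokes Lemma \ref{lemma TIJ} as a black box (which already encapsulates the change of variables and produces the factor $\bar\eta(z)$), whereas you unwind the $\eta$-dependence by hand using \eqref{TG}, \eqref{taylor1} and the structure of $G^{(\alpha)}$ before applying H\"older. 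This redoes the content of Lemma \ref{lemma TIJ} inline but is not a genuinely different route. One remark on your closing paragraph: your warning that one cannot ``apply H\"older in $x$ before extracting $\bar\eta(z)$'' is not quite right. The paper does exactly that --- first $\int|D^nv|^{p-2}|D^nI^\eta v|^2\,dx\le|D^nv|^{p-2}_{L_p}|D^nI^\eta v|^2_{L_p}$, then Lemma \ref{lemma TIJ} gives $|D^nI^\eta v|_{L_p}\le N\bar\eta(z)|v|_{W^{n+1}_p}$ --- and the factor $\bar\eta^2(z)$ survives the H\"older step perfectly well, making the $z$-integral converge. The genuine danger, which you also correctly identify, is replacing $|D^nv|^{p-2}|D^nI^\eta v|^2$ by $|D^nv|^p+|D^nI^\eta v|^p$ via Young's inequality, since the first summand carries no $\bar\eta$ weight and its $z$-integral produces $\mu(Z)$.
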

\begin{proof} 
Noticing that 
$p|D^nv|^{p-2}v_{\alpha}\sigma^{ir}D_iv_{\alpha}
=\sigma^{ir}D_i|D^nv|^p$, by integration by parts and by Minkowski's 
and H\"older's inequalities we obtain that 
$$
\bar{\frak P}^{2}_{n,p}(t,v,g)
:=\sum_{r=1}^{\infty}(p|D^nv|^{p-2}D_\alpha v,\cM^rv_{\alpha}+g^r_{\alpha}))^2
$$
can be estimated by the right-hand side of \eqref{DM1estimate}. 
By Minkowski and H\"older's inequalities it is easy to see that 
$$
{\frak P}^{2}_{n,p}(t,v,g)-\bar{\frak P}^{2}_{n,p}(t,v,g)
$$ 
can also be 
estimated by the right-hand side of \eqref{DM1estimate}. 
To prove (ii) let 
$$                                                  
A_s(x,z):=|D^n(v+I^{\eta}v+h)|^{p}
-|D^nv|^p
-p|D^nv|^{p-2}D_{\alpha} v
D_{\alpha}(I^{\eta}v+h), 
$$
denote the integrand in \eqref{DM3}. 
Using Taylor's formula for 
$|{\bf x}+{\bf y}|^p-|{\bf x}|^p-p|\bf x|^{p-2}{\bf x}_{\alpha}{\bf y}_{\alpha}$ 
with vectors 
$$
{\bf x}_{\alpha}:=D_{\alpha}v, 
\quad 
{\bf y}_{\alpha}:=D_{\alpha}(I^{\eta}v+h), 
$$
$\alpha\in\{1,2,...,d\}^n$, we have the estimate 
$$
0\leq A_s(x,z)\leq N|{\bf x}|^{p-2}|{\bf y}|^2+N|{\bf y}|^p
\leq N'|D^nv|^{p-2}|D^nI^{\eta}v|^2
$$
$$
+N'|D^nv|^{p-2}|D^nh|^2+N'|D^nh|^p
+N'|D^n I^\eta v|^p
$$
with constants $N$ and $N'$ depending only on $d$,$p$ and $n$. 
By Fubini's theorem and H\"older's inequality
$$
\int_Z\int_{\bR^d}|D^nv|^{p-2}|D^nh|^2\,dx\,\mu(dz)
=\int_{\bR^d}|D^nv|^{p-2}|h(x)|^2_{\cL_2}\,dx
\leq |v|^{p-2}_{W^n_p}|h|^2_{W^n_p(\cL_{2})}.  
$$
By H\"older's inequality and Lemma \ref{lemma TIJ} we obtain 
$$
\int_{Z}\int_{\bR^d}
|D^nv|^{p-2}|D^nI^{\eta}v|^2\,dx\,\mu(dz)
\leq \int_{Z}|D^nv|^{p-2}_{L_p}
|D^nI^{\eta}v|^2_{L_p}\,\mu(dz)
\leq N^2K^2_{\eta} |v|^{p-2}_{W^{n}_p}|v|^2_{W^{n+1}_{p}}. 
$$
Moreover, by Assumption \ref{assumption eta} 
and Lemma \ref{lemma TIJ} we have
$$
\int_Z\int_{\bR^d}|D^n I^\eta v|^p \,dx\,\mu(dz)
\leq N^p K^{p-2}K_\eta^2|v|^p_{W^{n+1}_p}.
$$
Combining these inequalities and using Young's inequality we get 
\eqref{DM3estimate}. 
\end{proof}

\mysection{Proof of the main result}                                            \label{section mainproof}

\subsection{Uniqueness of the generalised solution}
Let Assumptions \ref{assumption L}, \ref{assumption xi},  
and \ref{assumption eta} hold with $m=0$. For a fixed 
$p\in[2,\infty)$ let $u^{(i)}=(u^{(i)}_t)_{t\in[0,T]}$ 
be $L_p$-valued generalised solutions to equation \eqref{eq1} 
with initial condition $u^{(i)}_0=\psi\in L_p$ for $i=1,2$. 
Then for $v=u^{(2)}-u^{(1)}$ by Lemma \ref{Ito Lp formula} we have that almost surely 
\begin{align}
y_t:=|v_t|_{L_p}^p = &
\int_0^tQ(s,v_s)
+\bar Q^{\xi}(s,v_s)+\bar Q^{\eta}(s,v_s)\,ds  \nonumber\\
&
+\int_0^t\int_Z\int_{\bR^d}P^{\eta}(s,z,v_{s-})(x)\,dx\,\pi(dz,ds)
+\zeta_1(t)+\zeta_2(t)                                                                \label{y}
\end{align}
for all $t\in[0,T]$, where $\zeta_1$ and $\zeta_2$ are local martingales 
defined by 
$$
\zeta_1(t):=p\int_0^t\int_{\bR^d}
|v_s|^{p-2} v_s
\cM^r_sv_s\,dx\,dw_s^r, 
$$
$$
\zeta_{2}(t):=p\int_0^t\int_Z\int_{\bR^d}
|v_{s-}|^{p-2}v_{s-}
I^{\eta}v_{s-}\,dx
\,\tilde{\pi}(dz,ds),     
$$
$Q(s,\cdot)$, $Q^{\eta}(s,\cdot)$ and $P^{\eta}(s,z,\cdot)$ 
are functionals on $W^1_p$, for each $(\omega,s)$ and $z$,   
defined by 
\begin{equation*}
Q(s,v):=
p\int_{\bR^d}-D_i(|v|^{p-2}v)a_s^{ij}D_jv
+\bar b^i_s|v|^{p-2}vD_iv+c_s|v|^{p}
+\tfrac{p-1}{2}|v|^{p-2}\sum_{r=1}^{\infty}
|\cM_s^rv|^2\,dx, 
\end{equation*}
\begin{equation}                                               \label{barQeta}
\bar Q^{\eta}(s,v)=p\int_{\bR^d}
-D_i(|v|^{p-2}v)\cJ_{\eta}^{i} v+|v|^{p-2}v\cJ_{\eta}^{0}v\,dx, 
\end{equation}
\begin{equation*}                                                      
P^{\eta}(s,z,v):=|v+I^{\eta}v|^{p}-|v|^p-p|v|^{p-2}vI^{\eta}v,   
\end{equation*}
and  $\bar Q^{\xi}(s,\cdot)$ is defined as 
$\bar Q^{\eta}(s,\cdot)$ in \eqref{barQeta}, but with $\xi$ 
in place of $\eta$. Recall that 
$\hat b^i=b-D_ja^{ij}$ and $\cJ_{\eta}^{i}$, 
$\cJ_{\eta}^{0}$ are defined by \eqref{def Jk}-\eqref{def J0}.

Note that due to the convexity of the function $|r|^p$, $r\in\bR$, we 
have 
\begin{equation}                                                         \label{positive}
P^{\eta}(s,z,v)(x)\geq0\quad\text{for all $(\omega,s,z,x)$}
\end{equation}
for real-valued functions $v=v(x)$, $x\in\bR^d$.  Together with 
the above functionals we need also to estimate 
the functionals $\frak Q(s,\cdot)$ and $\hat Q(s,\cdot)$ defined 
for each $(\omega,s)\in\Omega\times[0,T]$ by
$$
\frak Q(s,v):=\int_{Z}\int_{\bR^d}P^{\eta}(s,z,v)(x)\,dx\,\mu(dz), 
\quad
\hat Q^\eta(s,v):=\frak Q(s,v)+Q^{\eta}(s,v)
$$
for $v\in W^1_p$.  
\begin{proposition}                                                              \label{proposition uni}
Let Assumptions \ref{assumption L}, 
\ref{assumption xi} and \ref{assumption eta} hold with $m=0$. 
Then for $p\geq 2$ there are constants $N_1=N_1(d,p,K)$, 
$N_2=N_2(d,p,K,K_{\xi})$ and $N=N(d,p,K,K_{\eta})$ such that 
\begin{equation}                                                           \label{Qxieta}
Q(s,v)\leq N_1|u|^p_{L_p},
\quad
\bar Q^{\xi}(s,v)\leq N_2|v|^p_{L_p}, 
\quad 
\bar Q^{\eta}(s,v)\leq N|v|^p_{L_p},
\quad 
\hat Q^\eta(s,v)\leq N|v|^p_{L_p}, 
\end{equation}
\begin{equation}                                                            \label{P}
\frak Q(s,v)\leq N |v|^p_{W^1_p}
\end{equation}
for all $v\in W^1_p$ and $(\omega,s)\in\Omega\times[0,T]$.  
\end{proposition}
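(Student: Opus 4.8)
The plan is to establish all five estimates first for $v\in C_0^\infty$ and then to pass to general $v\in W^1_p$ by density, exploiting that $C_0^\infty$ is dense in $W^1_p$, that $v\mapsto|v|^p_{L_p}$ and $v\mapsto|v|^p_{W^1_p}$ are continuous on $W^1_p$, and that (for each fixed $(\omega,s)$) the functionals $Q(s,\cdot)$, $\bar Q^\xi(s,\cdot)$, $\bar Q^\eta(s,\cdot)$, $\hat Q^\eta(s,\cdot)$ are continuous on $W^1_p$. The continuity I would check using that $v\mapsto|v|^{p-2}v$ and $v\mapsto D_i(|v|^{p-2}v)=(p-1)|v|^{p-2}D_iv$ are continuous from $W^1_p$ into $L_{p/(p-1)}$ (here $p\ge2$ enters), that $\cJ^i_\eta$, $\cJ^0_\eta$ and their $\xi$-analogues are \emph{bounded} linear maps from $W^1_p$ into $L_p$ with norm $\le NK_\eta^2$ (resp.\ $NK_\xi^2$) by Taylor's formula and Lemma \ref{lemma TIJ}, and, for $\frak Q$, a dominated convergence argument with the $\mu$-integrable dominant $N\bar\eta^2(z)(|v|^p_{W^1_p}+1)$ coming from the Taylor bound on $P^\eta$ below; the bound \eqref{P} for $\frak Q$ will in fact be obtained directly on $W^1_p$. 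The estimate for $Q$ I would get as in \cite{GK2003}, \cite{GGK}: for $v\in C_0^\infty$ write $D_i(|v|^{p-2}v)=(p-1)|v|^{p-2}D_iv$, expand $\sum_r|\cM^r_sv|^2=\sigma^{ir}\sigma^{jr}D_ivD_jv+2\sigma^{ir}\beta^rvD_iv+\beta^r\beta^rv^2$, and observe that the second-order part of $Q(s,v)$ equals $-\tfrac{p(p-1)}{2}\int_{\bR^d}|v|^{p-2}\alpha^{ij}_sD_ivD_jv\,dx\le0$ by the stochastic parabolicity condition \eqref{parabolicity} (recall $\alpha^{ij}=2a^{ij}-\sigma^{ir}\sigma^{jr}$ and $a^{ij}=a^{ji}$), while the remaining terms contain at most one derivative of $v$ and, after using $|v|^{p-2}vD_iv=\tfrac1pD_i|v|^p$, one integration by parts, and the bounds on the first derivatives of $\bar b^i_s=b^i_s-D_ja^{ij}_s$ and of $\sum_r\sigma^{ir}_s\beta^r_s$ from Assumption \ref{assumption L}, are bounded by $N_1(d,p,K)|v|^p_{L_p}$.

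For $\bar Q^\eta$ and $\bar Q^\xi$, take $v\in C_0^\infty$, so that $\cN^\eta_sv,\cN^\xi_sv\in L_p$ by Lemma \ref{lemma TIJ}. Applying the integration-by-parts identity of Remark \ref{remark ibp} with the test function $|v|^{p-2}v\in W^1_{p/(p-1)}$ (approximated in $W^1_{p/(p-1)}$ by $C_0^\infty$ functions) I would obtain $\bar Q^\eta(s,v)=p\int_Z\int_{\bR^d}|v|^{p-2}v\,J^\eta v\,dx\,\mu(dz)$, and the same with $\xi$ and $\nu$ in place of $\eta$ and $\mu$. Then, exactly as in the proof of Proposition \ref{proposition Qxieta}(ii), the pointwise identity $p|v|^{p-2}vJ^\eta v=p|v|^{p-2}vI^\eta v+J^\eta|v|^p-I^\eta|v|^p$ together with the convexity inequality $p|v|^{p-2}vI^\eta v\le I^\eta|v|^p$ gives $\bar Q^\eta(s,v)\le\int_Z\int_{\bR^d}J^\eta|v|^p\,dx\,\mu(dz)\le NK_\eta^2|v|^p_{L_p}$ by Lemma \ref{lemma intIJ}(ii) applied to $\phi=|v|^p\in W^2_1$; the same argument for $\xi$ (or, directly, Proposition \ref{proposition Qxieta}(ii) with $n=0$) yields $\bar Q^\xi(s,v)\le N_2|v|^p_{L_p}$. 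Density then extends both inequalities to all $v\in W^1_p$.

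For $\hat Q^\eta=\bar Q^\eta+\frak Q$ the key is a cancellation. For $v\in C_0^\infty$, since $v+I^\eta v=v(\cdot+\eta)$ one has $P^\eta(s,z,v)=I^\eta(|v|^p)-p|v|^{p-2}vI^\eta v$, whereas $p|v|^{p-2}vJ^\eta v-p|v|^{p-2}vI^\eta v=J^\eta|v|^p-I^\eta|v|^p$ by the identity above; adding $\bar Q^\eta(s,v)$ and $\frak Q(s,v)$, the terms $p|v|^{p-2}vI^\eta v$ and $I^\eta(|v|^p)$ cancel, leaving $\hat Q^\eta(s,v)=\int_Z\int_{\bR^d}J^\eta(|v|^p)\,dx\,\mu(dz)\le NK_\eta^2|v|^p_{L_p}$ by Lemma \ref{lemma intIJ}(ii); density finishes it. Finally, \eqref{P} I would prove directly on $W^1_p$: $P^\eta(s,z,v)\ge0$ by convexity of $r\mapsto|r|^p$, and by Taylor's formula $P^\eta(s,z,v)\le N(|v|^{p-2}|I^\eta v|^2+|I^\eta v|^p)$, so that H\"older's inequality, Lemma \ref{lemma TIJ} (giving $|I^\eta v|_{L_p}\le N\bar\eta|v|_{W^1_p}$) and $\int_Z\bar\eta^p\,\mu(dz)\le K^{p-2}K_\eta^2$ yield $\frak Q(s,v)\le N(K_\eta^2|v|^{p-2}_{L_p}|v|^2_{W^1_p}+K^{p-2}K_\eta^2|v|^p_{W^1_p})\le N|v|^p_{W^1_p}$. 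I expect the main obstacle to be exactly this interplay: neither $\bar Q^\eta$ nor $\frak Q$ loses a derivative, but $\frak Q$ alone is controlled only by $|v|_{W^1_p}$, so the sharper $L_p$-bound for $\hat Q^\eta$ can be reached only after combining the two and using the above cancellation; and since $v\in W^1_p$ cannot be differentiated twice, every step in which $J^\eta$, $\cN^\eta$ (or $J^\xi$, $\cN^\xi$) acts on $v$ or on $|v|^p$ must be carried out on $C_0^\infty$ and transported to $W^1_p$ by the continuity/density argument of the first paragraph.
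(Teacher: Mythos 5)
Your proof is correct and takes essentially the same route as the paper's: the paper likewise establishes the first four bounds for sufficiently smooth $v$ by the pointwise identity $p|v|^{p-2}vJ^\eta v = p|v|^{p-2}vI^\eta v + J^\eta|v|^p - I^\eta|v|^p$, convexity of $|r|^p$, the cancellation $\hat Q^\eta(s,v)=\int_Z\int_{\bR^d}J^\eta|v|^p\,dx\,\mu(dz)$ and Lemma \ref{lemma intIJ}, obtains \eqref{P} via the Taylor bound on $P^\eta$, and then extends to $v\in W^1_p$ by continuity (the paper merely delegates several of these steps to Propositions \ref{proposition L}, \ref{proposition Qxieta} and \ref{proposition DM}, which you re-derive inline, and works first on $W^2_p$ rather than $C_0^\infty$, an immaterial difference).
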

\begin{proof}
Notice that the estimate \eqref{P} is the special case 
of Proposition \ref{proposition DM} (iii), and 
for $v\in W^2_p$ the second and third estimates in \eqref{Qxieta} 
follow from the estimate \eqref{Jpestimate} 
in Proposition \ref{proposition Qxieta} (ii).  
Notice also that for $v\in W^2_p$ 
the first estimate in \eqref{Qxieta} is a special case of 
\eqref{estimate L} in 
Proposition \ref{proposition L}. If $v\in W^2_p$ then 
by Assumption \ref{assumption eta} and estimate \eqref{intJ} in Lemma \ref{lemma intIJ} we have
$$
\hat{Q}^\eta(s,v)=\int_Z\int_{\bR^d}p|v|^{p-2}v(J^\eta v-I^\eta v)+|v+I^\eta v|^p
-|v|^p\,dx\,\mu(dz)
$$
$$
=\int_Z\int_{\bR^d}J^\eta |v|^p\,dx\,\mu(dz)\leq CK^2_\eta|v|^p_{L_p}
$$
with a constant $C$ only depending on $K$ and $d$.
It is an easy exercise to show that 
the functionals in the left-hand side of the inequalities in \eqref{Qxieta} 
are continuous in $v\in W^1_p$, that completes the proof of the 
proposition. 
\end{proof} 
Define now the stochastic process
$$
\gamma_t:=|v_t|^p_{L_p}+\int_0^t|v_s|_{W^1_p}^p\,ds 
$$
and the stopping time 
$$
\tau_n:=\inf\{t\in[0,T]:\gamma_t\geq n\}\wedge\rho_{n}
$$
for every integer $n\geq1$, where $(\rho_n)_{n=1}^{\infty}$ is an increasing 
sequence of stopping times, converging to infinity such that 
$(\zeta_i(t\wedge\rho_n))_{t\in[0,T]}$ is a martingale for each $n\geq1$ and $i=1,2$. 
Then clearly, $E\zeta_i(t\wedge \tau_n)=0$ for $t\in[0,T]$ and $i=1,2$.  
Due to \eqref{positive} and the estimate in \eqref{P} we have 
$$
E\int_0^{T\wedge\tau_n}\int_{Z}\int_{\bR^d}
|P^{\eta}(s,z,v_{s-})(x)|\,dx\,\mu(dz)\,ds  
\leq NE\int_0^{T\wedge\tau_n}|v_s|^p_{W^1_p}\,ds<\infty, 
$$
which implies 
$$
E\int_0^{t\wedge\tau_n}\int_Z\int_{\bR^d}P^{\eta}(s,z,v_{s-})(x)\,dx\,\pi(dz,ds)
$$
$$
=E\int_0^{t\wedge\tau_n}\int_Z\int_{\bR^d}P^{\eta}(s,z,v_{s-})(x)\,dx\,\mu(dz)\,ds
=E\int_0^{t\wedge\tau_n}\frak Q(s,v_{s-})\,ds. 
$$
Thus, substituting $t\wedge\tau_n$ in place of $t$ in \eqref{y} and then 
taking expectation and using Proposition \ref{proposition uni} we obtain 
$$
Ey_{t\wedge\tau_n}=E\int_0^{t\wedge\tau_n}
Q(s,v_s)+Q^{\xi}(s,v_s)+\hat Q^{\eta}(s,v_s)
\,ds
$$
$$ 
\leq NE\int_0^{t\wedge\tau_n}|v_s|^p_{L_p}\,ds
\leq N\int_0^{t}Ey(s\wedge\tau_n)\,ds\leq NTn<\infty
$$
for $t\in[0,T]$. 
Hence by Gronwall's lemma $Ey(t\wedge\tau_n)=0$ for each $t\in[0,T]$ and 
integer $n\geq1$, which implies almost surely $y_t=0$ for all $t\in[0,T]$ 
and completes the proof of the uniqueness. 
\subsection{A priori estimates} 

\begin{proposition}                                                          \label{proposition apriori1}
Let Assumptions \ref{assumption L} through \ref{assumption eta} hold 
with an integer $m\geq0$. Assume $p=2^k$ for some integer $k\geq1$ and let 
$u=(u_t)_{t\in[0,T]}$ be a $W^{m+2}_p$-valued generalised solution to 
\eqref{eq1}-\eqref{ini1} such that it is cadlag as a $W^m_p$-valued 
process and 
$$
E\int_0^T|u_t|^p_{W^{m+2}_p}\,dt+E\sup_{t\leq T}|u_t|^p_{W^m_p}<\infty.
$$
Then 
\begin{equation}                                                        \label{apriori1}
E\sup_{t\leq T}|u_t|^p_{W^n_p}
\leq NE|\psi|^p_{W^n_p}+NE\cK^{p}_{n,p}(T)
\quad
\text{for every integer $n\in[0,m]$} 
\end{equation}                                                      
with a constant $N=N(m,d,p,T, K,K_{\xi},K_{\eta})$. 
\end{proposition}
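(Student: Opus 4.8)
The plan is to apply It\^o's formula to $|D^nu_t|^p_{L_p}$, to reduce its drift to the ``$p$-forms'' estimated in Section~5, and then to pass from the resulting moment bound to the supremum bound by the Burkholder--Davis--Gundy inequality together with an absorption argument.

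Fix an integer $n\in[0,m]$. Since $u\in\bW^{m+2}_p$, one may differentiate \eqref{eq1} $n$ times and write the equation for each $D_\alpha u$, $|\alpha|=n$, in strong (non--integrated by parts) form, with free terms in $\bL_p$; hence Lemma~\ref{lemma Ito1} applies to the vector $(D_\alpha u)_{|\alpha|=n}$ and yields \eqref{L_p}. The ensuing bookkeeping is the first substantial point: the $\cL$-- and $\cM$--contributions together with the It\^o correction assemble into $Q_{n,p}$; the $\cN^\xi$--contribution into $Q^\xi_{n,p}$; the $\cN^\eta$--drift $Q^\eta_{n,p}$ \emph{together with} the $\mu(dz)\,ds$--compensator of the Poisson integral in \eqref{L_p} is precisely $\hat Q_{n,p}$ of \eqref{drift}; and the two purely discontinuous martingale terms produced by Lemma~\ref{lemma Ito1} (the $|\bar u|^{p-2}\bar u\,h$--term and the compensated Poisson integral) combine into a single $\tilde\pi$--martingale whose integrand, integrated over $x$, equals $\int_{\bR^d}\bigl(|D^n(T^\eta u_{s-}+h_s)|^{p}-|D^nu_{s-}|^{p}\bigr)\,dx$. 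Thus, modulo a continuous and a purely discontinuous local martingale, both null at $0$,
\begin{equation*}
|D^nu_t|^p_{L_p}=|D^n\psi|^p_{L_p}+\int_0^t\bigl(Q_{n,p}+Q^\xi_{n,p}+\hat Q_{n,p}\bigr)(s,u_s,\,\cdot\,)\,ds+(\text{local martingale})_t .
\end{equation*}

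Next I would invoke Proposition~\ref{proposition L} for $Q_{n,p}$, Proposition~\ref{proposition Qxieta}(i) for $Q^\xi_{n,p}$ --- here the hypothesis $p=2^k$ enters for the first time --- and Proposition~\ref{proposition hat} for $\hat Q_{n,p}$ --- here one uses that $p=2^k$ forces $p=2$ or $p\ge 4$, the two cases covered by that proposition. Together they give $Q_{n,p}+Q^\xi_{n,p}+\hat Q_{n,p}\le N|u_s|^p_{W^n_p}+\kappa^n_s$ with $\int_0^t\kappa^n_s\,ds\le N\cK^p_{n,p}(t)$ (recall $W^k_p=H^k_p$ for integer $k$, so the $H$--norms inside $\cK^p_{n,p}$ are the $W$--norms above, the extra derivative when $p>2$ matching the case $p\ge4$ of Proposition~\ref{proposition hat}). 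Summing over $j\le n$ and comparing $|u|^p_{W^n_p}$ with $\sum_{j\le n}|D^ju|^p_{L_p}$ gives
\begin{equation*}
|u_t|^p_{W^n_p}\le N|\psi|^p_{W^n_p}+\int_0^t\bigl(N|u_s|^p_{W^n_p}+N\kappa^n_s\bigr)\,ds+(\text{local martingale})_t .
\end{equation*}
I would then localise by $\tau_k:=\inf\{t\le T:\ |u_{t-}|^p_{W^m_p}+\int_0^t|u_s|^p_{W^{m+2}_p}\,ds\ge k\}\wedge T$, a sequence of stopping times increasing to $T$ by the standing integrability assumptions, which makes the local martingale a genuine martingale on $[0,\tau_k]$. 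Taking expectations at $t\wedge\tau_k$, applying Gronwall's lemma and letting $k\to\infty$ via Fatou first yields $\sup_{t\le T}E|u_t|^p_{W^n_p}\le N\bigl(E|\psi|^p_{W^n_p}+E\cK^p_{n,p}(T)\bigr)$. For the supremum bound I then take $\sup_{t\le\tau_k}$ in the last display, take expectations, bound $N\,E\int_0^{\tau_k}|u_s|^p_{W^n_p}\,ds$ by the moment estimate just obtained, and estimate the running supremum of the martingale by Burkholder--Davis--Gundy: its quadratic variation splits into a Wiener part, controlled by the bound on $\mathfrak P^2_{n,p}$ in Proposition~\ref{proposition DM}(i), and a jump part, whose predictable quadratic variation is $\int_0^{\tau_k}\!\int_Z\bigl(\int_{\bR^d}(|D^n(T^\eta u_{s-}+h_s)|^p-|D^nu_{s-}|^p)\,dx\bigr)^2\mu(dz)\,ds$. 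This last integral I would estimate directly by a change of variables which replaces one difference operator $I^\eta$ by a factor $\bar\eta$ on the Jacobian, in the spirit of Lemma~\ref{lemma intIJ} and of the computations in the proof of Proposition~\ref{proposition hat}, getting a bound by $NK_\eta^2|u_s|^{2p}_{W^n_p}$ plus terms in $h$ controlled by $\cK^p_{n,p}(T)$ (using the $\cL_{p,2}$--structure of $h$). Both quadratic variations are then absorbed via $\sqrt{ab}\le\varepsilon a+N_\varepsilon b$ and Young's inequality into $\varepsilon\,E\sup_{t\le\tau_k}|u_t|^p_{W^n_p}$ plus constants times $E|\psi|^p_{W^n_p}$, $E\int_0^T|u_s|^p_{W^n_p}\,ds$ and $E\cK^p_{n,p}(T)$; choosing $\varepsilon$ small, absorbing the supremum, invoking the moment estimate once more and letting $k\to\infty$ yields \eqref{apriori1}, with the asserted dependence of the constant.

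The step I expect to be the main obstacle is the estimate of the jump--martingale quadratic variation. The naive route --- bounding $\mathfrak R^2_{n,p}$ (the quadratic variation of the $|\bar u|^{p-2}\bar u\,h$--martingale) and the compensator $\mathfrak Q_{n,p}$ separately, using $|D^n(I^\eta v+h)|_{L_p}\lesssim\bar\eta\,|v|_{W^{n+1}_p}+\dots$ --- produces a $W^{n+1}_p$--norm of $u$ on the right, which is not available in \eqref{apriori1}. Avoiding this loss of one spatial derivative is exactly what forces one to combine the two discontinuous martingale contributions \emph{before} estimating, so that their quadratic variation is governed by the difference of $L_p$--norms above (which is handled by change of variables with no gain of derivative), and correspondingly to package the $\cN^\eta$--drift with the Poisson compensator into $\hat Q_{n,p}$ on the drift side. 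This cancellation is where the stochastic parabolicity of $(\cL,\cM)$ and the degeneracy assumption on $\cN^\eta$ are used, and it is the reason the a priori estimate is obtained at this stage only for $p=2^k$.
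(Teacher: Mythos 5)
Your outline of the drift analysis agrees with the paper: differentiate, apply Lemma~\ref{lemma Ito1} to $(D_\alpha u)_{|\alpha|=n}$, fold the $\cN^\eta$--drift with the Poisson compensator into $\hat Q_{n,p}$, estimate via Propositions~\ref{proposition L}, \ref{proposition Qxieta}(i) and~\ref{proposition hat}, and run Gronwall with Fatou. That part is fine, and your observation that combining the two jump martingales is needed to avoid a $W^{n+1}_p$--norm is also the right motivation.

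The gap is in the supremum estimate for the jump part. You propose to treat the single $\tilde\pi$--martingale $\zeta(t)=\int_0^t\int_Z\Gamma_s(z)\,\tilde\pi(dz,ds)$ with $\Gamma_s(z)=\int_{\bR^d}\bigl(|D^n(T^\eta u_{s-}+h_s)|^p-|D^nu_{s-}|^p\bigr)\,dx$ by the Davis/BDG inequality, bounding its quadratic variation through an $L_2(\mu)$--estimate of $\Gamma_s(\cdot)$. But no single integrability in $\mu$ works on all of $\Gamma_s(z)$. On the one hand, pieces like $\int_{\bR^d}I^\eta|D^nu_s|^p\,dx$ are only $O(\bar\eta(z))$ (Lemma~\ref{lemma intIJ}(iii)) and hence lie in $L_2(\mu)$, not $L_1(\mu)$, since only $\int_Z\bar\eta^2\,\mu(dz)=K_\eta^2<\infty$ is assumed; so the Davis route with a quadratic-variation bound is forced here. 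On the other hand, when you expand $\Gamma_s(z)$ via Taylor in $h$, the remainder contains $\int_{\bR^d}T^\eta|D^nu_s|^{p-2}|D^nh_s|^2\,dx$ and $\int_{\bR^d}|D^nh_s|^p\,dx$; to control these in $L_2(\mu)$ you would need $z\mapsto|D^nh_s(\cdot,z)|_{L_p}$ in $\cL_4(\mu)$ and $\cL_{2p}(\mu)$ respectively, which does \emph{not} follow from $h\in H^{n+1}_{p,2}$ (or $H^{n+2}_{p,2}$) --- the $\cL_{p,2}$--hypothesis only gives $L_1(\mu)$--type control of these quantities. So your claim that the $h$--terms are ``controlled by $\cK^p_{n,p}(T)$ using the $\cL_{p,2}$--structure of $h$'' is precisely where the argument breaks down under a single BDG application. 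This is why the paper, after forming $\zeta_2+\zeta_3$, re-decomposes it via Lemma~\ref{lemma B} into $\zeta_4+\cdots+\zeta_8$ and treats them non-uniformly: $\zeta_4$, $\zeta_5$, $\zeta_6$ (linear in $h$ or $h$--free but only $O(\bar\eta)$) by Davis' inequality with $L_2(\mu)$--bounds, and $\zeta_7$, $\zeta_8$ (with the $h$--quadratic and $h$--$p$th-power remainders, all $O(\bar\eta^2)$ or better) by estimating the total variation of their $\pi$-- and $\mu$--integrals, i.e. an $L_1(\mu)$--estimate using Lemma~\ref{lemma HK} and the pointwise bound~\eqref{BB}. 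Without this finer splitting the supremum bound does not close. You also omit the reduction to finite $\mu$--measure (the paper's nested exhaustion $Z_n\uparrow Z$), which is a smaller point but still needed to make the martingale manipulations licit before passing to the limit by Fatou.
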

\begin{proof}
We may assume that the right-hand side of the inequality 
\eqref{apriori1} is finite. 
  For multi-numbers $|\alpha|\leq m$ and $\varphi\in C_0^{\infty}$ 
\begin{align}                                              
d(D_{\alpha}u_t,\varphi)= &(D_{\alpha}\cA_tu_t(x)+D_{\alpha}f_t(x),\varphi)\,dt
+(D_{\alpha}\cM^r_tu_t(x)+D_{\alpha}g^r_t(x),\varphi)\,dw^r_t                        \nonumber\\
&+\int_Z(D_{\alpha}( u_{t-}(x+\eta_{t,z}(x))-u_{t-}(x)
+h_t(x,z)),\varphi)\,\tilde{\pi}(dz,dt)                                                                      \nonumber
\end{align}
Recall, see \eqref{L_p}, that by Lemma \ref{lemma Ito1} 
on It\^o's formula for each integer $n\in[0,m]$
\begin{equation*}                                                                                                      
d|D^nu_t|^p_{L_p}=(Q_{n,p}(v_t,t,f_t,g_t)+Q^{\xi}_{n,p}(v_t)+\hat Q_{n,p}(v_t,h_t))\,dt
+\sum_{i=1}^3d\zeta_i(t),
\end{equation*}
where the $Q_{n,p}$, $Q^{\xi}_{n,p}$ and $\hat Q_{n,p}$ are defined in 
\eqref{drift L}, \eqref{Qxi} and \eqref{drift}, and 
$\zeta_i=(\zeta_i(t))_{t\in[0,T]}$ is a cadlag local martingale starting from zero for each 
$i=1,2,3$, such that 
$$
d\zeta_1(t)=p(|D^nu_t|^{p-2}D_{\alpha}u_t,
D_{\alpha}\cM^r_tu_t+D_{\alpha}g^r_t)\,dw_t^r, 
$$
\begin{equation}                                               \label{z2}
d\zeta_{2}(t):=p\int_Z(
|D^nu_{t-}|^{p-2}D_{\alpha}u_{t-},
D_{\alpha}I^{\eta}u_{t-}+D_{\alpha}h_{t,z})\,\tilde\pi(dz,dt)
\end{equation}
and 
\begin{equation}                                               \label{z3}
d\zeta_{3}(t):=
\int_Z P_{n,p}(t,u_{t-},h_t)\,\pi(dz,dt)-\int_Z P_{n,p}(t,u_{t-},h_t)\,\mu(dz,dt), 
\end{equation}
where 
$$
P_{n,p}(t,v,h):=\int_{\bR^d}|D^n(T^{\eta}v+h)|^{p}-|D^nv|^p
-p|D^nv|^{p-2}
\sum_{|\alpha|=n}
D_{\alpha}vD_{\alpha}(I^{\eta}v+h) \,dx
$$
for $v\in W^{m+2}_p$ and $h\in W^{m+2}_{p,2}$. 
By Propositions \ref{proposition L} and \ref{proposition Qxieta}  
we obtain 
\begin{equation*}                                                        
d|D^nu_t|^p\leq N(|u_t|^{p}_{W^n_p}\,dt+d\cK^p_{n,p}(t))
+\sum_{i=1}^3d\zeta_i(t).
\end{equation*}
Hence using 
the estimate \eqref{DM3estimate} in Proposition \ref{proposition DM}
we have
$$
E|D^n u_{t\wedge \tau_k}|^p\leq E|D^n\psi|^p
+N\int_0^tE|u_{s\wedge \tau_k}|^p_{W^n_p}\,ds
+NE\cK^p_{n,p}(T\wedge \tau_k)
$$
for all $t\in[0,T]$, for a localising sequence $(\tau_k)_{k=1}^{\infty}$ 
of stopping times 
for $\zeta_i$, $i=1,2,3$. Hence  by Gronwall's lemma
$$
E|u_{t\wedge \tau_k}|^p_{W^n_p}\leq N(E|\psi|^p_{W^n_p}+E\cK^p_{n,p}(T))
$$
for $t\in[0,T]$ and $k\geq 1$ with a constant 
$N=N(d,m,p,T,K,K_\xi, K_\eta)$, which implies
\begin{equation}                                                                                        \label{sup0}
\sup_{t\leq T}E|u_t|^p_{W^n_p}\leq N(E|\psi|^p_{W^n_p}+E\cK^p_{n,p}(T))
\end{equation}
by Fatou's lemma. To show that we can interchange the supremum 
and expectation it suffices to prove 
that for every $\varepsilon>0$ 
\begin{equation}                                                             \label{sup1}
E\sup_{t\leq T}|\zeta_1(t)|\leq \varepsilon E\sup_{t\leq T}|u_t|^p_{W^n_p}
+
N(E|\psi|^p_{W^n_p}+E\cK^p_{n,p}(T))<\infty
\end{equation}
and 
\begin{equation}                                                              \label{sup23}
E\sup_{t\leq T}|\zeta_2(t)+\zeta_3(t)|\leq \varepsilon E\sup_{t\leq T}|u_t|^p_{W^n_p}
+
N(E|\psi|^p_{W^n_p}+E\cK^p_{n,p}(T))<\infty
\end{equation}
with a constant $N=N(\varepsilon,d,m,p,T,K,K_{\xi},K_{\eta})$. 
The proof of \eqref{sup1} is well-known and it goes as follows. 
By the Davis inequality, using the estimate in \eqref{DM1estimate}   
we obtain 
$$
E\sup_{t\leq T}|\zeta_1(t)|\leq 3E\left(\int_0^{T}
\frak P^2(t,u_t,g_t)\,dt\right)^{1/2}
\leq N
E\left(\int_0^{T}
|u_t|^{2p}_{W^n_p}+|u_t|^{2p-2}_{W^n_p}|g_t|^{2}_{W^{n}_p}\,dt\right)^{1/2}
$$
$$
\leq N
E\left(\sup_{t\leq T}|u_t|^{p}_{W^n_p}\int_0^{T}
|u_t|^{p}_{W^n_p}+|u_t|^{p-2}_{W^n_p}|g_t|^{2}_{W^{n}_p}\,dt\right)^{1/2}
$$
\begin{equation}                                                       \label{sup2}
\leq \varepsilon E\sup_{t\leq T}|u_t|^{p}_{W^n_p}
+\varepsilon^{-1} N^2E\int_0^T|u_t|^{p}_{W^n_p}+|g_t|^{p}_{W^{n}_p}\,dt<\infty,  
\end{equation}
which gives \eqref{sup1} by virtue of \eqref{sup0}. To prove \eqref{sup23} we first 
assume that $\mu$ is a finite measure. 
Then taking into account Lemma \ref{lemma B} we have 
$$
\zeta(t):=\zeta_2(t)+\zeta_3(t)
=\sum_{i=4}^8\zeta_i(t)
$$
with 
$$
\zeta_4(t)=\int_0^t\int_{Z}\int_{\bR^d}
|D^nu_s|^{p-2}D_{\alpha}u_sD_{\alpha}h_s\,dx\,\tilde\pi(dz,ds), 
$$
$$
\zeta_5(t)=\int_0^t\int_{Z}\int_{\bR^d}I^{\eta}|D^nu_s|^{p}\,dx\,\tilde\pi(dz,ds), 
$$
$$
\zeta_6(t)=p\int_0^t\int_{Z}\int_{\bR^d}T^{\eta}
(|D^nu_s|^{p-2}v_{\alpha})G^{(\alpha)}(u_s)
\,dx\,\tilde\pi(dz,ds)
$$
$$
\zeta_7(t)=p\int_0^t\int_{Z}\int_{\bR^d}
I^{\eta}(D^nu_{s-}|^{p-2}D_{\alpha}u_{s-})D_{\alpha}h_s\,dx\,\pi(dz,ds) 
$$
$$
-p\int_0^t\int_{Z}\int_{\bR^d}
I^{\eta}(D^nu_s|^{p-2}D_{\alpha}u_s)D_{\alpha}h_s\,dx\,\mu(dz)\,ds
$$
$$
\zeta_8(t)=\int_0^t\int_{Z}\int_{\bR^d}\bar B_n(u_{s-},h_s)\,dx\,\pi(dz,ds)-
\int_0^t\int_{Z}\int_{\bR^d}\bar B_n(u_{s-},h_s)\,dx\,\mu(dz)\,ds. 
$$
By Minkowski's and H\"older's inequalities  
$$
\int_0^T\int_Z\Big|\int_{\bR^d}
|D^nu_s|^{p-2}D_{\alpha}u_sD_{\alpha}h_s\,dx\Big|^2\mu(dz)\,ds
\leq \int_0^T\Big(\int_{\bR^d}
|D^nu_s|^{p-1}|D^nh_s|_{\cL_2}\,dx\Big)^2\,ds
$$
$$
\leq \int_0^T
|D^nu_s|^{2p-2}_{L_p}|h_s|^2_{W^n_{p,2}}\,ds. 
$$
Using this we can apply the Davis inequality to get 
\begin{equation}                                                            \label{sup4}
E\sup_{t\leq T}|\zeta_i(t)|\leq \varepsilon E\sup_{t\leq T}|u_t|^p_{W_p^n}
+\varepsilon^{-1}N(E|\psi|^p_{W^n_p}+E\cK^p_{n,p}(T))
\end{equation}
for $i=4$ in the same way as estimate in \eqref{sup1} is proved. 
Using Lemma  \ref{lemma intIJ} (iii) 
we get 
$$
\int_0^T\int_{Z}\Big|\int_{\bR^d}I^{\eta}|D^nu_s|^{p}\,dx\Big|^2\mu(dz)\,ds
\leq N\int_0^T|D^nu_s|^{2p}_{L_p}\,ds, 
$$
which allows us to get 
the estimate \eqref{sup4} for $i=5$.  
Using H\"older's inequality we get 
$$
\int_{\bR^d}|T^{\eta}(|D^nu_s|^{p-2}D_{\alpha}u_s)G^{(\alpha)}(u_s)|\,dx
\leq N\bar\eta|D^nu_s|^{p-1}_{L_p}|u_s|_{W^n_p}\leq N \bar\eta|u_s|^p_{W^n_p}. 
$$
Hence 
$$
\int_0^T\int_{Z}\Big|\int_{\bR^d}T^{\eta}(|D^nu_s|^{p-2}v_{\alpha})G^{(\alpha)}(u_s)
\,dx\Big|^2\,\mu(dz)\,ds
\leq N\int_0^T|u_s|^{2p}_{W^n_p}\,ds, 
$$
which gives  the estimate \eqref{sup4} for $i=6$.  
By Lemma \ref{lemma HK} and the estimate in \eqref{sup0} we have  
$$
E\int_0^T\int_{Z}\Big|\int_{\bR^d}
EI^{\eta}(D^nu_{s-}|^{p-2}D_{\alpha}u_{s-})D_{\alpha}h_s\,dx\Big|\mu(dz)\,ds= 
E\int_0^TH_{n,p}(u_{s-},h_s)\,ds
$$
$$
\leq NE\int_0^T|u_s|^p_{W^m_p}\,ds+NE\cK^p_{n,p}(T)
\leq N'E|\psi|^p_{W^n_p}+N'E\cK^p_{n,p}(T)
$$
with constants $N$ and $N'$ depending only on 
$K$, $d$, $m$, $p$, $T$, $K_{\xi}$ and $K_{\eta}$. Hence  
\begin{equation}                                                        \label{sup7}                                                                                           
E\sup_{t\leq T}|\zeta_i(t)|\leq N E|\psi|^p_{W^n_p}+NE\cK^p_{n,p}(T) 
\end{equation}
for $i=7$ with a constant $N=N(K,d,m,p,T,K_{\xi},K_{\eta})$. Similarly, using the estimate  
for $\bar B_{n,p}$ in \eqref{BB} and the estimate for $K_{n,p}$ 
in Lemma \ref{lemma HK} we obtain 
the estimate \eqref{sup7} for $i=8$. Clearly,  \eqref{sup4} for $i=4,5,6$ and 
\eqref{sup7} for $i=7,8$ imply estimate \eqref{sup2}. 

In the general case of $\sigma$-finite measure $\mu$ 
we have a nested sequence $(Z_n)_{n=1}^{\infty}$ 
of sets $Z_n\in\cZ$ such that $\mu(Z_n)<\infty$ 
for every $n$ and $U_{n=1}^{\infty}Z_n=Z$. 
For each integer $k\geq1$ define the measures 
$$
\pi_k(F)=\pi((Z_k\times(0,T])\cap F), \quad \mu_k(G)=\mu(Z_k\cap G)
$$
for $F\in \cZ\otimes\cB((0,T])$ and $G\in\cZ$, and set 
$\tilde\pi_k(dz,dt)=\pi_k(dz,dt)-\mu_k\otimes dt$. Let $\zeta^{(k)}_2$ 
and $\zeta^{(k)}_3$ be defined as $\zeta_2$ 
and $\zeta_3$, respectively, but with $\tilde\pi_k$, $\pi_k$ and $\mu_k$ 
in place of $\tilde\pi$, $\pi$ and $\mu$, respectively, in \eqref{z2} and 
\eqref{z3}. By virtue of what we have proved above, for each $k$ 
we have 
\begin{equation}                                                           \label{Zn}
E\sup_{t\leq T}|\zeta^{(k)}_2(t)+\zeta^{(k)}_3(t)|
\leq \varepsilon E\sup_{t\leq T}|u_t|^p_{W^n_p}
+
N(E|\psi|^p_{W^n_p}+E\cK^p_{n,p}(T))<\infty
\end{equation}
for $\varepsilon>0$ with a constant $N=N(\varepsilon,m,p,T,K,K_{\xi}, K_{\eta})$. 
Note that for a subsequence $k'\to\infty$ 
$$
\zeta^{(k')}_i(t)\to \zeta_i(t)\quad\text{almost surely, uniformly in $t\in[0,T]$}
$$
for $i=2,3$. Hence letting $k=k'\to\infty$ in \eqref{Zn} by Fatou's lemma we 
obtain \eqref{sup2}, which completes the proof of the lemma. 
\end{proof}

To obtain the estimate \eqref{apriori1} for an arbitrary $p\in[2,\infty)$ 
we make the following assumptions. 
\begin{assumption}                                              \label{assumption e1}
The initial condition $\psi$ and the free data $f$, $g$ and $h$ vanish 
if $|x|\geq R$ for some $R>0$. 
\end{assumption}
\begin{assumption}                                              \label{assumption e2}
Assumptions \ref{assumption L} through \ref{assumption free} hold 
for each integer $m\geq0$ with non-negative functions  
$\bar\xi=\bar\xi_m(z)$, $\bar\eta=\bar\eta_m(z)$ of $z\in Z$ 
and constants $K=K_m$,   
$$
K^2_{\xi}:=K^2_{\xi,m}=\int_Z\bar\xi^2_m(z)\,\nu(dz)<\infty, 
\quad
K^2_{\eta}:=K^2_{\eta,m}=\int_Z\bar\eta^2_m(z)\,\mu(dz)<\infty. 
$$
Moreover, 
$$
E|\psi|_{W^m_p}^p+E\cK^p_{m,p}(T)<\infty
\quad
\text{for each integer $m\geq0$}.
$$
\end{assumption}
\begin{assumption}                                              \label{assumption e3}
There is a constant $\varepsilon>0$ such that 
$P\otimes dt \otimes dx$-almost all
 $(\omega,t,x)\in\Omega\times H_T$
$$
(2a^{ij}-\sigma^{ir}\sigma^{jr})z^iz^j\geq \varepsilon|z|^2\quad
\text{for all $z=(z^1,...,z^d)$}.
$$
\end{assumption}
\begin{proposition}                                               \label{proposition apriori2}                        
Let Assumptions \ref{assumption e1}, \ref{assumption e2} and 
\ref{assumption e3} hold. Then \eqref{eq1}-\eqref{ini1} has a 
unique generalised solution $u=(u_t)_{t\in[0,T]}$. Moreover 
$u$ is a cadlag $W^n_p$-valued process for 
every integer $n\geq0$, and estimate \eqref{apriori1}
holds for each integer $m\geq1$ and real number $p\geq2$, 
with a constant $N=N(m,d,p,T, K_m,K_{\xi,m},K_{\eta,m})$. 
\end{proposition}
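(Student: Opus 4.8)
The plan is to settle uniqueness, construct a smooth $L_2$-solution and bootstrap its $x$-regularity, deduce \eqref{apriori1} for the dyadic exponents $p=2^k$ from Proposition \ref{proposition apriori1}, and finally reach arbitrary $p\in[2,\infty)$ by interpolating the solution operator.

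\emph{Uniqueness and a smooth solution.} By Assumption \ref{assumption e2} the hypotheses of Assumptions \ref{assumption L}, \ref{assumption xi}, \ref{assumption eta} hold with $m=0$, and Assumption \ref{assumption e3} implies \eqref{parabolicity}; hence the uniqueness argument of Section \ref{section mainproof} gives that there is at most one generalised solution, for every $p\in[2,\infty)$. For existence, under the strong stochastic parabolicity of Assumption \ref{assumption e3} the pair $(\cL,\cM)$ together with $\cN^{\xi},\cN^{\eta}$ satisfies the coercivity and monotonicity conditions of the $L_2$-theory of stochastic evolution equations driven by Wiener processes and Poisson martingale measures (the $dt$-drift of $\cN^{\eta}$ being compensated by the quadratic variation of the jump noise $I^{\eta}u$, so that it contributes only a lower-order term), so by \cite{G1982} (see also \cite{Da},\cite{LM2015}) there is a unique generalised solution $u=(u_t)_{t\in[0,T]}$ in $\bH^1_2$, cadlag as an $L_2$-valued process. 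Differentiating \eqref{eq1} in $x$, for each integer $n\ge1$ the family $\{D_{\alpha}u:|\alpha|\le n\}$ solves a stochastic evolution system of the same type whose coefficients are the derivatives up to order $n$ of the original ones, bounded by Assumption \ref{assumption e2} and still satisfying Assumption \ref{assumption e3}; applying the same theory and invoking uniqueness shows that $u$ is a cadlag $W^n_2$-valued process for every integer $n\ge0$. Since the data vanish for $|x|\ge R$ (Assumption \ref{assumption e1}), Sobolev's embedding and Assumption \ref{assumption e2} bound all $W^N_2$-norms of $\psi,f,g,h$ by their $W^{N+n_0}_p$-norms, and the standard higher-moment estimates for this theory give $E\sup_{t\le T}|u_t|^{q}_{W^N_2}<\infty$ for every integer $N\ge0$ and every $q=2^k$.

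\emph{Dyadic exponents and interpolation.} From the inequality $|v|_{W^n_p}\le C|v|^{2/p}_{W^n_2}|v|^{1-2/p}_{W^{n+n_0}_2}$ for $p\ge2$ (interpolation of $L_p$ between $L_2$ and $L_\infty$, plus Sobolev), the process $u$ is cadlag $W^n_p$-valued for every $n\ge0$ and $p\ge2$, with $E\sup_{t\le T}|u_t|^{p}_{W^n_p}+E\int_0^T|u_t|^p_{W^{n+2}_p}\,dt<\infty$ when $p=2^k$; hence Proposition \ref{proposition apriori1} applies for each $p=2^k$ and yields \eqref{apriori1} for these exponents. Let $\cR$ be the linear solution operator $(\psi,f,g,h)\mapsto u$, fix $p\in[2,\infty)$, $k$ with $2^k\ge p$, and $\theta$ with $1/p=(1-\theta)/2+\theta/2^k$. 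For any finite $r$, \eqref{apriori1} and $|v|_{L_r([0,T],W^n_p)}\le T^{1/r}\sup_{t\le T}|v_t|_{W^n_p}$ show that $\cR$ maps the data space
$$
\mathbb D^{(n)}_{p}:=L_p(\Omega,W^n_p)\times\bW^n_p\times\bW^{n+1}_p(l_2)\times\bW^{n+1}_{p,2}\times\bW^{n+2}_{p,2}
$$
boundedly into $L_p(\Omega,L_r([0,T],W^n_p))$ with operator norm $\le T^{1/r}N^{1/p}$ for $p=2$ and $p=2^k$. By Theorem \ref{theorem i1}(i)--(iv), Theorem \ref{theorem H} and Corollary \ref{corollary H} — using $W^n_r=H^n_r$ for integer $n$, that $l_2,\cL_{p,2}$ are UMD, and that the predictable functions form a complemented subspace — one gets $[\mathbb D^{(n)}_{2},\mathbb D^{(n)}_{2^k}]_{\theta}=\mathbb D^{(n)}_{p}$ and $[L_2(\Omega,L_r([0,T],W^n_2)),L_{2^k}(\Omega,L_r([0,T],W^n_{2^k}))]_{\theta}=L_p(\Omega,L_r([0,T],W^n_p))$, so $\cR:\mathbb D^{(n)}_p\to L_p(\Omega,L_r([0,T],W^n_p))$ is bounded with operator norm $\le T^{1/r}N_1$, $N_1$ independent of $r$. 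Letting $r\to\infty$ and using that $\sup_{t\le T}|u_t|_{W^n_p}=\lim_{r\to\infty}\big(\int_0^T|u_t|^r_{W^n_p}\,dt\big)^{1/r}$ for the cadlag process $u$, Fatou's lemma gives $E\sup_{t\le T}|u_t|^p_{W^n_p}\le N_1^p\big(E|\psi|^p_{W^n_p}+E\cK^p_{n,p}(T)\big)$, i.e. \eqref{apriori1} for all $p\in[2,\infty)$.

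\emph{Main obstacle.} The crux is the interpolation identity for the spaces carrying the jump free term $h$, namely $[\bW^{n+1}_{2,2},\bW^{n+1}_{2^k,2}]_{\theta}=\bW^{n+1}_{p,2}$ (and likewise with $n+2$), available because $2\notin(2,2^k)$; this is exactly Theorem \ref{theorem Lpq interpolation}/Corollary \ref{corollary H}, and is the reason those results were developed. A minor bookkeeping point is that the indicator $\mathbf{1}_{p>2}$ in $\cK^p_{p,m}$ forces carrying the component $\bW^{n+2}_{p,2}$ in the data space also at $p=2$, which only weakens the already-proved $p=2$ estimate and so is harmless. The detour through interpolation is unavoidable because the $p$-form estimate \eqref{estimate tilde} for $Q^{\xi}_{n,p}$ with $n\ge1$ is known only when $p=2^k$, so the a priori bounds for non-dyadic $p$ cannot be read off directly from Itô's formula \eqref{L_p}.
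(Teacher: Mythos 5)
Your proof takes essentially the same route as the paper: uniqueness carried over from the preceding subsection, existence and $W^n_2$-regularity from the $L_2$-theory of \cite{G1982}, a Sobolev/H\"older step to get a $W^n_p$-valued cadlag solution, Proposition \ref{proposition apriori1} for $p=2^k$, and then interpolation of the solution operator via Theorems \ref{theorem i1}, \ref{theorem H} and Corollary \ref{corollary H} for general $p$, finally letting $r\to\infty$. The only differences are cosmetic: you interpolate between $2$ and $2^k\ge p$ rather than between consecutive dyadic powers $2^k<p<2^{k+1}$, and you spell out the bookkeeping forced by the $\mathbf{1}_{p>2}$ indicator (carrying the $H^{n+2}$-norm of $h$ also at the $p=2$ endpoint, which is a weaker and hence still valid bound) that the paper leaves implicit.
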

Before proving this proposition we introduce some 
notations.  
For integers $r>1$, real numbers $n\geq 0$ and $p\geq 2$ let $\bU^{n}_{r,p}$ 
denote  
the space of $H^n_p$-valued $\cF\otimes\cB([0,T])$-measurable  
functions $v$ on $\Omega\times [0,T]$ such that 
$$
|v|_{\bU^{n}_{r,p}}:=E\left(\int_0^T|v_t|^r_{H^n_p}\,dt\right)^{p/r}<\infty.
$$
The subspace of well-measurable functions 
$v:\Omega\times[0,T]\to H^n_p$ in $\bU^{n}_{r,p}$ is denoted by 
$\bV^{n}_{r,p}$. 
Set $\Psi^m_p:=L_p(\Omega, H^m_p)$, and recall from the Introduction 
the definition of  
the spaces $\bH^n_p(V)$ and $\bH^n_p=\bH^n_p(\bR)$ 
for separable Banach spaces $V$. 

\begin{proof}[Proof of Proposition \ref{proposition apriori2}] 
The uniqueness of the solution is proved above. 
Due to Assumptions \ref{assumption e1} and \ref{assumption e2}
$$
E|\psi|^2_{W^n_2}+E\cK^2_{n,2}(T)<\infty
$$
for each $n$. Hence 
by \cite{G1982} for $p=2$ 
the Cauchy problem \eqref{eq1}-\eqref{ini1} 
has a unique generalised solution $u$  
which is a $W^n_2$-valued cadlag process 
and for each integer $n\geq0$ there is a constant $N$ such that 
$$
E\sup_{t\leq T}|u_t|^2_{W^n_2}\leq N(E|\psi|^2_{W^{n}_2}+E\cK^2_{n,2}(T))<\infty
$$
Thus by Sobolev's embedding $u$ is a cadlag $W^n_p$-valued process 
for every $n$ such that 
$$
E\sup_{t\leq T}|u|^p_{W^n_p}<\infty. 
$$
In particular, $u$ is a generalised 
solution to \eqref{eq1}-\eqref{ini1} for the given $p$.  
Moreover, if $p=2^k$ for some integer $k\geq1$ then 
by estimate \eqref{apriori1} for $m\geq1$ and $n=0,1,...,m$ we have 
\begin{equation}                                                       \label{sup r} 
|u|_{\bU^n_{r,p}}\leq N(|\psi|_{\Psi^n_p}
+|f|_{\bH^n_p}+|g|_{\bH^{n+1}_p(l_2)}                                                                             
+|h|_{\bH^{n+i}_p(\cL_{p,2})} )                                                    
\end{equation}
for $i=1$ when $p=2$ and $i=2$ when $p>2$, 
with a constant $N=N(d,m,p,T,K_m,K_{\xi,m}, K_{\eta,m})$.   
This means the solution operator
$$
\bS:(\psi,f,g,h)\to u
$$
is a bounded operator from 
$\Psi^n_p\times \bH^n_p\times \bH^{n+1}_p(l_2)\times\bH^{n+i}_p(\cL_{p,2})$ 
into $\bU^n_{p,r}$ with operator norm 
smaller than a constant $N=N(d,m,p,T,K_m,K_{\xi,m}, K_{\eta,m})$ 
for integers  $n\in[0,m]$, $r>1$, for $i=1$ when $p=2$ and $i=2$ when $p>2$.  
If $p$ is not an integer power of 2 then we take an integer 
$k\geq1$ and a parameter 
$\theta\in(0,1)$ such that $p_0=2^k<p<2^{k+1}=p_1$ 
and $1/p=(1-\theta)/p_0+\theta/p_1$.  
By Theorem \ref{theorem i1}, 
\ref{theorem Lpq interpolation} and \ref{theorem H} we have
$$
\Psi^n_p=[\Psi^n_{p_0},\Psi^n_{p_1}]_{\theta}=L_p(\Omega, H^n_p), 
\quad
[\bH^n_{p_0},\bH^n_{p_1}]_{\theta}=\bH^n_p,
$$
$$
[\bH^{n}_{p_0}(l_2),\bH^{n}_{p_1}(l_2)]_\theta=\bH^{n}_p(l_2),
\quad
[\bH^{n}_{p_0}(\cL_{p_0,2}),\bH^{n}_{p_1}(\cL_{p_1,2})]_\theta
=\bH^{n}_p(\cL_{p,2}) ,
$$
and
$$
\mathbb{U}^n_{r,p}
=[\mathbb{U}^n_{r,p_0},\mathbb{U}^n_{r,p_1}]_{\theta}=\mathbb{U}^n_{r,p} 
$$
for any $n\geq0$ and $r>1$. Consequently, 
by Theorem \ref{theorem i1} (i)
$\mathbb S$ is continuous and \eqref{sup r} holds 
for the given $p$ for all $r>1$,  
where letting $r\to\infty$ 
gives 
$$
E\esssup_{t\in[0,T]} |u_t|^p_{H^n_p}\leq N(E|\psi|^p_{H^p}+E\cK_{n,p}^p(T))
\quad
\text{for $n=0,1,...,m$}
$$   
for each $m\geq1$ with a constant 
$N=N(d,m,p,T,K_m,K_{\xi,m}, K_{\eta,m})$. 
Since $u$ is a cadlag process with values in $H^n_p$, with almost surely no jump 
at $T$, we can 
change the essential supremum  to supremum here, which 
finishes the proof of the proposition. 
\end{proof}

\subsection{Existence of a generalised solution.}
In the whole section we assume that the conditions of 
Theorem \ref{theorem main} 
are in force. By standard stopping time argument we may assume that 
\begin{equation*}                                                           
E\cK^p_{p,m}(T)< \infty.
\end{equation*}
First we additionally assume that Assumption \ref{assumption e1} holds and that 
$m$ is an integer. 
Under these conditions 
we approximate the Cauchy problem \eqref{eq1}-\eqref{ini1}  
by mollifying all data and coefficients involved in it. 
For $\varepsilon\in(0,\varepsilon_0)$ 
 we consider the equation 
\begin{align}                                              
dv_t(x)= & \left(\cA_t^\varepsilon v_t(x)+f^{(\varepsilon)}_t(x)
\right)\,dt
+\left( \cM^{\varepsilon r}_t v_t(x)+g^{(\varepsilon) r}_t(x) \right)\,dw^r_t\nonumber\\
&+\int_Z\left( v_t(x+\eta^{(\varepsilon)}_{t,z}(x))-v_t(x)
+h^{(\varepsilon)}_t(x,z) \right)\,\tilde{\pi}(dz,dt),  \label{eq1 smooth} 
\end{align}
with initial condition 
\begin{equation}                                                         \label{epsilonini}
v_0(x)=\psi^{(\varepsilon)}, 
\end{equation}
where $\varepsilon_0$ is given in Lemma \ref{lemma diff2},
$$
\cM^{\varepsilon r}=\sigma^{(\varepsilon)ir}D_i+\beta^{(\varepsilon)r},
\quad
\cA^\varepsilon=\cL^\varepsilon
+\cN^{\xi^{(\varepsilon)}}+\cN^{\eta^{(\varepsilon)}}
$$
with operators 
$$
\cL^\varepsilon=a^{\varepsilon ij}D_{ij}+b^{(\varepsilon)i}D_i+c^{(\varepsilon)},
\quad a^\varepsilon=a^{(\varepsilon)}+\varepsilon\bI, 
$$
and $\cN^{\xi^{(\varepsilon)}}$ and $\cN^{\eta^{(\varepsilon)}}$ defined as 
$\cN^{\xi}$ and $\cN^{\eta}$ in \eqref{def N} with $\xi^{(\varepsilon)}$ and 
$\eta^{(\varepsilon)}$ in place of $\xi$ and $\eta$, 
respectively. 
Recall that  $v^{(\varepsilon)}$ denotes  
the mollification $v^{(\varepsilon)}=S^\varepsilon v$ of $v$ in $x\in\bR^d$ 
defined in \eqref{mollification}. Note that by virtue of standard properties of mollifications 
and by Lemmas \ref{lemma diff2} and \ref{lemma e} the conditions 
of Proposition \ref{proposition apriori2} are satisfied. Hence for $u^{\varepsilon}$, 
the solution of \eqref{eq1 smooth}-\eqref{epsilonini} we have 
\begin{equation}                                                     \label{ue}
|u^{\varepsilon}|_{\bV^n_{r,p}}
\leq 
N(|\psi|_{\Psi^n_p}+|f|_{\bH^n_p}+|g|_{\bH^{n+1}_p(l_2)}+|h|_{\bH^{n+i}_p(\cL_{p,2})})
\quad
\text{for $n=0,1,2,...,m$}
\end{equation}
for every integer $r>1$ 
with a constant $N=N(d,p,m,T,K,K_{\xi},K_{\eta})$, where $i=1$ when $p=2$ 
and $i=2$ when $p>2$.  
Since $\bV^n_{r,p}$ is reflexive, there exists a sequence 
$\{\varepsilon_k\}_{k=1}^\infty$ 
and a process $u\in\bV^{n,r}_p$ such that 
$\lim_{k\to\infty}\varepsilon_k=0$ 
and $u^{\varepsilon_k}$ converges weakly to some $u$ in $\bV^{n,r}_p$.  
To show that a modification of $u$ is a solution to \eqref{eq1}-\eqref{ini1} we pass to the limit 
in the equation 
\begin{align}                                              
(u^{\varepsilon}_t,\varphi)= &(\psi^{(\varepsilon)},\varphi)+\int_0^t 
\langle\cA_s^\varepsilon u_s^{\varepsilon}+f^{(\varepsilon)}_s,\varphi\rangle
\,ds
+ \int_0^t
(\cM^{\varepsilon r}_su_s^{\varepsilon}+g^{(\varepsilon) r}_s,\varphi)
\,dw^r_s     \nonumber\\
&
+\int_0^t\int_Z\int_{\bR^d}
\big(u^{\varepsilon}_s(x+\eta^{(\varepsilon)}_{s,z}(x))-u^{\varepsilon}_s(x)
+h^{(\varepsilon)}_s(z)\big)\varphi(x)\,dx\,\tilde{\pi}(dz,ds)                               \label{eq1e} 
\end{align}
where $\varphi\in C_0^{\infty}$. 
To this end we take a bounded predictable 
real-valued process $\zeta=(\zeta_t)_{t\in[0,T]}$, multiply  both sides 
of equation \eqref{eq1e} with $\zeta_t$ 
and then integrate the expression we get against $P\otimes dt$ over $\Omega\times [0,T]$. 
Thus we obtain 
\begin{align}
F(u^{\varepsilon})= & E\int_0^T\zeta_t(\psi^{(\varepsilon)},\varphi)\,dt
+\sum_{i=1}^3F^i_{\varepsilon}(u^{\varepsilon})
+E\int_0^T\int_0^t \zeta_t(f_s^{(\varepsilon)},\varphi)
\,ds\,dt\nonumber\\
&
+E\int_0^T\zeta_t\int_0^t(g_s^{(\varepsilon)r},\varphi)\,dw_s^r\,dt
+E\int_0^T\zeta_t\int_0^t\int_Z(h_s^{(\varepsilon)},\varphi)\,\tilde{\pi}(dz,ds)\,dt,      \label{F1}
\end{align}
where $F$ and $F^i_{\varepsilon}$, $i=1,2,3$, 
are linear functionals of $v\in \bH^1_p$, defined by 
$$
F(v)=E\int_0^T\zeta_t(v_t,\varphi)\,dt, 
\quad 
F_{\varepsilon}^1=E\int_0^T\zeta_t\int_0^t\<\cA_s^{\varepsilon},\varphi\>
\,ds\,dt
$$
$$
F^2_{\varepsilon}(v)
=E\int_0^T\zeta_t\int_0^t(\cM_s^{\varepsilon r}v_s,\varphi)\,dw^r_s\,dt 
$$
and
$$
F^3_{\varepsilon}(v)
=
E\int_0^T\zeta_t\int_0^t\int_Z
(I^{\eta^{(\varepsilon)}}v_s,\varphi)\,\tilde{\pi}(dz,ds)\,dt.
$$
For each $i$, we also define the functional $F^i$ 
in the same way as $F^i_{\varepsilon}$ is defined above, 
but with $\cA$, $\cM$ and $I^{\eta}$ in place of 
$\cA^{\varepsilon}$, $\cM^{\varepsilon}$ 
and $I^{\eta^{(\varepsilon)}}$ respectively. Obviously, 
by H\"older's inequality and the boundedness of $\zeta$, 
for all $v\in\bV^1_p$ we have 
$$
|F(v)|\leq C|v|_{\bL_p}|\varphi|_{L_q}\leq C|v|_{\bH^1_p}|\varphi|_{L_q}
$$
with $q=p/(p-1)$ and a constant $C$ independent of $v$ and $k$, 
which means $F\in {\bH^{1}_p}^*$, 
the space of all bounded linear functionals on $\bH^1_p$. 
Next we show that 
$F^i_{\varepsilon}$ and $F^i$ are also in ${\bH^1_p}^*$,  
and $F^i_{\varepsilon}\to F^i$ strongly in ${\bH^1_p}^*$ 
as $\varepsilon\to0$ for $i=1,2,3$.
\begin{lemma}
For $i=1,2,3$ the functionals $F^i$ and $F^i_{\varepsilon}$ 
are in $\bH^{1*}_p$ for sufficiently small $\varepsilon>0$.
\end{lemma}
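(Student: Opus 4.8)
The plan is to show that each of $F^{i}$ and $F^{i}_{\varepsilon}$ is a bounded linear functional on $\bW^{1}_{p}$; since the scalar field is a UMD space, $\bH^{1}_{p}=\bW^{1}_{p}$ with equivalent norms, and this gives the claim. Linearity being obvious, the task is to produce a constant $N$, independent of $v\in\bW^{1}_{p}$ and of $\varepsilon\in(0,\varepsilon_{0})$ (but allowed to depend on the fixed quantities $\zeta,\varphi,d,m,p,T,K,K_{\xi},K_{\eta}$), such that $|F^{i}(v)|+|F^{i}_{\varepsilon}(v)|\leq N|v|_{\bW^{1}_{p}}$ for $i=1,2,3$. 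I would treat $F^{i}$ and $F^{i}_{\varepsilon}$ by identical estimates, since the mollified data obey the same bounds as the originals ($a^{\varepsilon}=a^{(\varepsilon)}+\varepsilon\bI$ is bounded by $K+\varepsilon_{0}$; $b^{(\varepsilon)i}-D_{j}a^{(\varepsilon)ij}=(b^{i}-D_{j}a^{ij})^{(\varepsilon)}$ and $c^{(\varepsilon)}$ are bounded by $K$; $|\eta^{(\varepsilon)}|\vee|D\eta^{(\varepsilon)}|\leq\bar\eta$ and $|\xi^{(\varepsilon)}|\vee|D\xi^{(\varepsilon)}|\leq\bar\xi$), while by Lemmas \ref{lemma diff2} and \ref{lemma e} the maps $x\mapsto x+\theta\eta^{(\varepsilon)}_{t,z}(x)$ and $x\mapsto x+\theta\xi^{(\varepsilon)}_{t,z}(x)$ are diffeomorphisms with bounds uniform in $\varepsilon\in(0,\varepsilon_{0})$; so below I describe only $F^{1},F^{2},F^{3}$.

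For $F^{1}$ I would expand $\langle\cA_{s}v_{s},\varphi\rangle$ as in Definition \ref{definition solution}. The local terms $-(a^{ij}_{s}D_{j}v_{s},D_{i}\varphi)+(\bar b^{i}_{s}D_{i}v_{s}+c_{s}v_{s},\varphi)$ are bounded by $N|v_{s}|_{W^{1}_{p}}|\varphi|_{W^{1}_{q}}$, $q=p/(p-1)$, by H\"older's inequality, since $a^{ij}$, $\bar b^{i}=b^{i}-D_{j}a^{ij}$ and $c$ are bounded by Assumption \ref{assumption L}. For the nonlocal terms I would write $v_{s}(\tau_{\theta\eta}(x))-v_{s}(x)=I^{\theta\eta_{s,z}}v_{s}(x)$ in \eqref{def Jk}--\eqref{def J0}, use $|\eta_{s,z}|\vee|D\eta_{s,z}|\leq\bar\eta(z)$ together with the bounds $|I^{\theta\eta_{s,z}}v_{s}|_{L_{p}}\leq N\theta\bar\eta(z)|v_{s}|_{W^{1}_{p}}$ and $|v_{s}(\tau_{\theta\eta}(\cdot))|_{L_{p}}\leq N|v_{s}|_{W^{1}_{p}}$ coming from Lemma \ref{lemma TIJ}, and apply Minkowski's integral inequality in $(\theta,z)$; this gives $|\cJ^{k}_{\eta}(s)v_{s}|_{L_{p}}+|\cJ^{0}_{\eta}(s)v_{s}|_{L_{p}}\leq NK_{\eta}^{2}|v_{s}|_{W^{1}_{p}}$, and the same with $\xi,\bar\xi,K_{\xi}$ in place of $\eta,\bar\eta,K_{\eta}$. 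Hence $|\langle\cA_{s}v_{s},\varphi\rangle|\leq N|v_{s}|_{W^{1}_{p}}|\varphi|_{W^{1}_{q}}$ for all $(\omega,s)$, so that by boundedness of $\zeta$ and H\"older's inequality with respect to $P\otimes ds$ (of total mass $T$),
$$
|F^{1}(v)|\leq\|\zeta\|_{\infty}\,T\,E\!\int_{0}^{T}|\langle\cA_{s}v_{s},\varphi\rangle|\,ds\leq N|\varphi|_{W^{1}_{q}}\,|v|_{\bW^{1}_{p}}.
$$

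For $F^{2}$ (resp. $F^{3}$) I would put $M_{t}:=\int_{0}^{t}(\cM^{r}_{s}v_{s},\varphi)\,dw^{r}_{s}$ (resp. $M_{t}:=\int_{0}^{t}\int_{Z}(I^{\eta_{s,z}}v_{s},\varphi)\,\tilde{\pi}(dz,ds)$), which is a well-defined square-integrable martingale: its integrand is predictable (measurability in the $F^{3}$ case following from Lemma \ref{lemma TIJ}), and, using boundedness of $\sigma^{i}=(\sigma^{ir})_{r}$, $\beta=(\beta^{r})_{r}$ in $l_{2}$ with the Cauchy--Schwarz inequality in $l_{2}$ in the first case and the bound $|I^{\eta_{s,z}}v_{s}|_{L_{p}}\leq N\bar\eta(z)|v_{s}|_{W^{1}_{p}}$ of Lemma \ref{lemma TIJ} in the second,
$$
EM_{T}^{2}\leq N|\varphi|^{2}_{L_{q}}\,E\!\int_{0}^{T}|v_{s}|^{2}_{W^{1}_{p}}\,ds\leq N|\varphi|^{2}_{L_{q}}\,|v|^{2}_{\bW^{1}_{p}},
$$
the last step by H\"older with respect to $P\otimes ds$ since $p\geq2$. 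As $EM_{t}^{2}$ is nondecreasing in $t$, Jensen's inequality gives $E|M_{t}|\leq(EM_{T}^{2})^{1/2}$, whence
$$
|F^{2}(v)|\leq\|\zeta\|_{\infty}\,E\!\int_{0}^{T}|M_{t}|\,dt\leq\|\zeta\|_{\infty}\,T\,(EM_{T}^{2})^{1/2}\leq N|\varphi|_{L_{q}}\,|v|_{\bW^{1}_{p}},
$$
and likewise for $F^{3}$.

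I do not expect a genuine obstacle here. The only points requiring some care are the stochastic-integral terms $F^{2},F^{3}$, where a time average of a martingale must be controlled via $EM_{t}^{2}\leq EM_{T}^{2}$ and Jensen, and the jump term is reduced to the estimate $|I^{\eta}v|_{L_{p}}\leq N\bar\eta|v|_{W^{1}_{p}}$ of Lemma \ref{lemma TIJ}; and the bookkeeping showing that every constant depends on the coefficients only through $K,K_{\xi},K_{\eta}$ and the diffeomorphism constants of Lemma \ref{lemma diff2}, none of which changes under mollification for $\varepsilon\in(0,\varepsilon_{0})$, so all the bounds are uniform in $\varepsilon$.
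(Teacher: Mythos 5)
Your argument is correct and follows essentially the same route as the paper: expand $\langle \cA_s v_s,\varphi\rangle$ via Definition \ref{definition solution} and Remark \ref{remark ibp} and bound the nonlocal terms with Lemma \ref{lemma TIJ} to handle $F^1$, $F^1_\varepsilon$; then reduce $F^2,F^3$ (and $F^2_\varepsilon,F^3_\varepsilon$) to an estimate on the time-average of a martingale $M_t$ against the deterministic data $\|\zeta\|_\infty,\varphi$, using H\"older in $(\omega,s)$ with $p\geq2$ in the final step. The one genuine difference is in how you control $E\int_0^T|M_t|\,dt$: the paper invokes the Davis inequality, bounding this by $T\,E\sup_{t\leq T}|M_t|\leq 3T\,E[M]_T^{1/2}$ and then applying Jensen to pass $E$ inside the square root, whereas you simply use Fubini, Jensen, and the monotonicity of $t\mapsto EM_t^2$ to get $\int_0^T E|M_t|\,dt\leq\int_0^T(EM_t^2)^{1/2}\,dt\leq T(EM_T^2)^{1/2}$, followed by It\^o isometry. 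Your route is more elementary --- it avoids BDG/Davis entirely --- and since this lemma only needs a first moment of $|M_t|$ (not a supremum), nothing is lost. For $F^1$ the paper merely cites Lemma 5.3 of \cite{DGW}; your explicit verification via Lemma \ref{lemma TIJ} and Minkowski's integral inequality fills in that reference correctly, and your bookkeeping on uniformity in $\varepsilon\in(0,\varepsilon_0)$ (via Lemmas \ref{lemma diff2} and \ref{lemma e} and the preserved bounds on mollified coefficients) matches what is needed.
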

\begin{proof}
It is easy to show, see  Lemma 5.3 in \cite{DGW}, that  we have 
$F^1_{\varepsilon}\in {H^1_p}^*$.
Then due to the boundedness of $\zeta$, $\sigma^{(\varepsilon)r}$ 
and $\beta^{(\varepsilon)r}$, by the Davis and H\"older's inequalities, we get
$$
|F^2_{\varepsilon}(v)|
\leq 
CE\Big(\int_0^T\sum_{r}|(\cM_s^{\varepsilon r},\varphi)|^2\,ds\Big)^{1/2}
\leq C'|v|_{\bH^1_p}|\varphi|_{L_q}
$$
with constants $C$ and $C'$ independent of $v$ and $\varepsilon$. 
Similarly, by the boundedness of $\zeta$, using Lemma \ref{lemma TIJ}, 
and Davis' and H\"older's inequalities
$$
|F^3_{\varepsilon}(v)|
\leq 
CE(\int_0^T\int_Z|(I^{\eta^{(\varepsilon)}}v_s,\varphi)|^2\,\mu(dz)\,ds)^{1/2}
$$
$$
\leq 
CE\Big(
\int_0^T\int_Z\bar{\eta}^2(z)|v_s|^2_{H^1_p}|\varphi|^2_{L_q}\,\mu(dz)\,ds
\Big)^{1/2}
\leq C'|v|_{\bH^1_p}|\varphi|_{L_q}
$$
with constants $C$ and $C'$ independent of $v$ and $\varepsilon$. 
In the same way we can prove $F^i\in {\bH^1_p}^*$ for $i=1,2,3$.
\end{proof}

\begin{lemma}
For each $i=1,2,3$
\begin{equation}                                                            \label{Fi}
\lim_{\varepsilon\to0}\sup_{|v|_{\bH^1_p}\leq 1}
|(F^i_{\varepsilon}-F^i)(v)|=0.
\end{equation}
\end{lemma}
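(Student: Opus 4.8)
The plan is to bound the operator norm of each difference $F^{i}_{\varepsilon}-F^{i}$ on $\bH^{1}_{p}$ by a constant multiple of $\varepsilon$ (the constant depending on the fixed $\varphi\in C^{\infty}_{0}$ but not on $v$ or $\varepsilon$), which immediately gives \eqref{Fi}. The common input is that, by Assumptions \ref{assumption L}, \ref{assumption xi} and \ref{assumption eta}, all coefficients and both fields $\eta,\xi$ have first-order $x$-derivatives bounded (in the $l_{2}$-valued sense for $\sigma,\beta$), hence are Lipschitz in $x$, so that standard properties of mollification yield
$$
\sup_{\omega,t,x}\bigl(|a^{(\varepsilon)}-a|+|b^{(\varepsilon)}-b|+|c^{(\varepsilon)}-c|+|\sigma^{(\varepsilon)}-\sigma|_{l_{2}}+|\beta^{(\varepsilon)}-\beta|_{l_{2}}\bigr)\le C\varepsilon,
$$
while, using $|D^{k+1}\eta_{t,z}|\le\bar\eta(z)$, also $|D^{k}\eta^{(\varepsilon)}_{t,z}-D^{k}\eta_{t,z}|\le C\varepsilon\bar\eta(z)$ for $k\le\bar m-1$, with the same statements for $\xi$ and $\bar\xi$. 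I record that $\bar b^{(\varepsilon)i}=(\bar b^{i})^{(\varepsilon)}$ (the extra $\varepsilon\bI$ in $a^{\varepsilon}$ does not enter $D_{j}a^{\varepsilon ij}$), and that, for all sufficiently small $\varepsilon$, the maps $x\mapsto x+\theta\eta_{t,z}(x)+r\theta(\eta^{(\varepsilon)}_{t,z}-\eta_{t,z})(x)$, $r,\theta\in[0,1]$, are $C^{1}$-diffeomorphisms of $\bR^{d}$ with Jacobian determinants bounded away from $0$, uniformly in $(\omega,t,z,r,\theta)$, by the perturbation argument behind Lemmas \ref{lemma diff} and \ref{lemma diff2}. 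This makes available, for $v\in W^{1}_{p}$, the bound $\|I^{\eta^{(\varepsilon)}_{t,z}}v-I^{\eta_{t,z}}v\|_{L_{p}}\le C\varepsilon\bar\eta(z)\,|v|_{W^{1}_{p}}$, obtained by writing the difference as $\int_{0}^{1}v_{i}\bigl(\cdot+\theta\eta+r\theta(\eta^{(\varepsilon)}-\eta)\bigr)\theta(\eta^{(\varepsilon)i}-\eta^{i})\,dr$ and changing variables, together with the trivial $\|I^{\eta^{(\varepsilon)}_{t,z}}v-I^{\eta_{t,z}}v\|_{L_{p}}\le C\bar\eta(z)|v|_{W^{1}_{p}}$; analogous statements hold with $\xi,\bar\xi$.

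For $F^{2}$ and $F^{3}$ the argument is short. Using $|\zeta|\le C$, the Davis inequality and H\"older's inequality,
$$
|(F^{2}_{\varepsilon}-F^{2})(v)|\le C\,E\Bigl(\int_{0}^{T}\sum_{r}\bigl|\bigl((\cM^{\varepsilon r}_{s}-\cM^{r}_{s})v_{s},\varphi\bigr)\bigr|^{2}\,ds\Bigr)^{1/2},
$$
and by Minkowski's inequality and the sup-estimates of the previous paragraph the integrand is at most $C\varepsilon^{2}|v_{s}|^{2}_{W^{1}_{p}}|\varphi|^{2}_{L_{q}}$ with $q=p/(p-1)$; since $p\ge2$, H\"older in time and Jensen's inequality give $E\bigl(\int_{0}^{T}|v_{s}|^{2}_{W^{1}_{p}}\,ds\bigr)^{1/2}\le C|v|_{\bH^{1}_{p}}$, whence $|(F^{2}_{\varepsilon}-F^{2})(v)|\le C\varepsilon|\varphi|_{L_{q}}|v|_{\bH^{1}_{p}}$. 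The identical scheme, with $\bigl((I^{\eta^{(\varepsilon)}}-I^{\eta})v_{s},\varphi\bigr)$ in place of $\bigl((\cM^{\varepsilon r}-\cM^{r})v_{s},\varphi\bigr)$ and $\int_{Z}\cdots\,\mu(dz)$ in place of $\sum_{r}$, together with $\|I^{\eta^{(\varepsilon)}_{t,z}}v_{s}-I^{\eta_{t,z}}v_{s}\|_{L_{p}}\le C\varepsilon\bar\eta(z)|v_{s}|_{W^{1}_{p}}$ and $K_{\eta}^{2}=\int_{Z}\bar\eta^{2}\,d\mu<\infty$, gives $|(F^{3}_{\varepsilon}-F^{3})(v)|\le C\varepsilon K_{\eta}|\varphi|_{L_{q}}|v|_{\bH^{1}_{p}}$.

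For $F^{1}$ I write $\langle(\cA^{\varepsilon}_{s}-\cA_{s})v_{s},\varphi\rangle$ in the weak form of Definition \ref{definition solution} (and Remark \ref{remark ibp}). The local part $-\bigl((a^{(\varepsilon)ij}-a^{ij}+\varepsilon\delta^{ij})D_{j}v_{s},D_{i}\varphi\bigr)+\bigl((\bar b^{(\varepsilon)i}-\bar b^{i})D_{i}v_{s}+(c^{(\varepsilon)}-c)v_{s},\varphi\bigr)$ is $\le C\varepsilon|v_{s}|_{W^{1}_{p}}|\varphi|_{W^{1}_{q}}$. For the nonlocal part, I split each of the differences $\cJ^{k}_{\eta^{(\varepsilon)}}-\cJ^{k}_{\eta}$ and $\cJ^{0}_{\eta^{(\varepsilon)}}-\cJ^{0}_{\eta}$ (and their $\xi$-analogues) into (a) the difference of the ``outer'' factors $\eta^{k}$, $\eta^{k}_{k}$, $\eta^{k}\eta^{l}_{k}$ — each $O(\varepsilon)$ times $\bar\eta$ or $\bar\eta^{2}$ — multiplied by translated copies of $v$ or $v_{l}$ whose $L_{p}$-norms are controlled, after a change of variables, by Lemmas \ref{lemma TIJ} and \ref{lemma intIJ}; and (b) the difference of the translations themselves. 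In the terms where $v$ (not $Dv$) is translated, (b) is handled exactly as in the $F^{3}$ estimate, producing an $O(\varepsilon\bar\eta)|v|_{W^{1}_{p}}$ contribution. For the single term of $\cJ^{0}_{\eta}$ carrying a translated gradient $v_{l}(\tau_{\theta\eta})$, since $v\in W^{1}_{p}$ only, I first pair with $\varphi$, change variables $y=\tau_{\theta\eta}(x)$ (respectively $y=\tau_{\theta\eta^{(\varepsilon)}}(x)$) and integrate by parts as in the proof of Lemma \ref{lemma intIJ}, transferring $D_{l}$ onto the smooth factor built from $\eta$ (respectively $\eta^{(\varepsilon)}$), $\varphi$ and the Jacobian of the inverse map; the difference then reduces to an integral of $v$ against a function bounded in the sup-norm by $C\varepsilon$ times a $\mu$-integrable weight, because $\tau^{-1}_{\theta\eta^{(\varepsilon)}}\to\tau^{-1}_{\theta\eta}$ together with its first two derivatives, uniformly in $(\omega,t,z,\theta)$, by Lemmas \ref{lemma diff} and \ref{lemma diff2}, and $\varphi\in C^{\infty}_{0}$. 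Summing the pieces, integrating over $Z$ against $\mu$ (and $\nu$ for the $\xi$-terms), and then integrating in $(\omega,t,s)$ with $|\zeta|\le C$ and H\"older in time, gives $|(F^{1}_{\varepsilon}-F^{1})(v)|\le C_{\varphi}\,\varepsilon\,(1+K_{\eta}^{2}+K_{\xi}^{2})\,|v|_{\bH^{1}_{p}}$ with $C_{\varphi}$ independent of $v$ and $\varepsilon$.

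In all three cases the bound is uniform over $\{|v|_{\bH^{1}_{p}}\le1\}$, so $\sup_{|v|_{\bH^{1}_{p}}\le1}|(F^{i}_{\varepsilon}-F^{i})(v)|\le C\varepsilon\to0$ as $\varepsilon\to0$, which is \eqref{Fi}. I expect the main obstacle to be the single $\cJ^{0}$-term containing a translated first derivative of $v$: since $v$ lies only in $W^{1}_{p}$ one cannot estimate $\|v_{l}(\tau_{\theta\eta^{(\varepsilon)}})-v_{l}(\tau_{\theta\eta})\|_{L_{p}}$ directly and must instead move the derivative onto smooth data via change of variables and integration by parts, which in turn relies on the $C^{2}$-closeness of $\tau^{-1}_{\theta\eta^{(\varepsilon)}}$ to $\tau^{-1}_{\theta\eta}$ furnished by Lemmas \ref{lemma diff} and \ref{lemma diff2}; a subsidiary nuisance is ensuring that every $z$-integral stays finite using only $\int\bar\eta^{2}\,d\mu=K_{\eta}^{2}$ and $\int\bar\xi^{2}\,d\nu=K_{\xi}^{2}$, and converting the $L_{2}$-in-time bounds produced by the Davis inequality into the $\bH^{1}_{p}$-norm by H\"older and Jensen.
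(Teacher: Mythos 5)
Your proof is correct and, for $i=2,3$, takes essentially the same route as the paper: the Davis inequality followed by H\"older and Jensen in $(\omega,t)$, the pointwise $O(\varepsilon)$ mollification error $|\sigma^{(\varepsilon)}-\sigma|+|\beta^{(\varepsilon)}-\beta|\le N\varepsilon$, and, for $F^3$, the first-order Taylor expansion of $I^{\eta^{(\varepsilon)}}v-I^{\eta}v$ along the segment $\gamma^{\varepsilon}_{\theta}(s,z,x)=x+\theta\eta^{(\varepsilon)}+(1-\theta)\eta$ combined with the uniform Jacobian bound from Lemma \ref{lemma diff2}. (A small slip: the interpolation formula you wrote for $I^{\eta^{(\varepsilon)}}v-I^{\eta}v$ carries a spurious factor $\theta$; the estimate you derive from it is nevertheless the correct one, and the paper's version sets the convex-combination parameter to run once from $\eta$ to $\eta^{(\varepsilon)}$. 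Also note the paper's display for $F^2$ writes $g^{(\varepsilon)}-g$ where it means $\beta^{(\varepsilon)}-\beta$; you use the right object.) For $i=1$ the paper does not argue at all but refers to the proof of Lemma 5.4 in \cite{DGW}; your detailed argument -- splitting the local and nonlocal pieces, using the outer-factor-versus-translation decomposition for $\cJ^k_{\eta^{(\varepsilon)}}-\cJ^k_{\eta}$ and $\cJ^0_{\eta^{(\varepsilon)}}-\cJ^0_{\eta}$, and, for the single $\cJ^0$ term carrying $v_l(\tau_{\theta\eta})$, changing variables and integrating by parts to move the derivative onto the smooth $\eta$/$\varphi$/Jacobian factor -- is precisely the mechanism behind that cited lemma, and it correctly exploits $\bar m\ge 3$ so that two derivatives of $\tau^{-1}_{\theta\eta^{(\varepsilon)}}$ converge to those of $\tau^{-1}_{\theta\eta}$ at rate $\varepsilon$ uniformly in $(\omega,t,z,\theta)$. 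So this is the same approach with the $i=1$ step filled in rather than cited.
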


\begin{proof}
It is easy to show, see the proof of Lemma 5.4 in \cite{DGW}, that 
$$
\lim_{\varepsilon\to \infty}\sup_{|v|_{\bH^1_p}\leq 1}|(F^1_{\varepsilon}-F^1)(v)|=0.
$$
By the boundedness of $\zeta$ and using Davis' and H\"older's inequalities 
we have 
\begin{align*}
|F^2_{\varepsilon}(v)-F^2(v)| 
& 
\leq CE
\Big(
\int_0^T|(\cM^{\varepsilon r}_s v_s-\cM^r_s v_s,\varphi)\,ds)|^2
\Big)^{1/2}                                                                                                        \\
&
\leq CE
\Big(
\int_0^T\sum_{r=1}^\infty(|\sigma_s^{(\varepsilon)r}-\sigma_s^r|
|Dv_s|,|\varphi|)^2\,ds
\Big)^{1/2}                                                                                                          \\
&
\quad +CE\Big(\int_0^T\sum_{r=1}^\infty(|g_s^{(\varepsilon)r}-g_s^r|
|v_s|,|\varphi|)^2\,ds\Big)^{1/2}\\
&
\leq C(A^1_k(v)+A^2_k(v))
\end{align*}
for $v\in \bH^1_p$ and all integers $k\geq 1$ with a constant $C$ 
independent of $v$ and $\varepsilon$, where
$$
A^1_k(v):=E(\int_0^T|Dv_s|^2_{L_p}||\sigma_s^{(\varepsilon_k)}
-\sigma_s||\varphi||^2_{L_q})^{1/2}
$$
and
$$
A^2_k(v)
:=E(\int_0^T|v_s|^2_{L_p}||g^{(\varepsilon_k)}_s-g_s||\varphi||_{L_q}^2\,ds)^{1/2} 
$$
with $q=p/(p-1)$. By standard properties of mollification 
$$
|\sigma_t^{(\varepsilon)}-\sigma_t|+|g_t^{(\varepsilon)}-g_t|\leq N\varepsilon
$$
for all $\varepsilon\in(0,1)$ and $(x,t,\omega)\in H_T\times \Omega$ 
with a constant $N=N(K,d)$. Thus,
$$
\sup_{|v|_{\bH^1_p}\leq 1}A^1_k(v)+\sup_{|v|_{\bH^1_p}\leq 1}A^2_k(v)
\leq NT^{(p-2)/2p}\varepsilon|\varphi|_{L_q}
$$ 
with $q=p/(p-1)$ and a constant $N=N(K,d)$. Consequently, letting 
$\varepsilon\to0$ we obtain \eqref{Fi} for $i=2$. 
By the boundedness of $\zeta$,  
using Davis' inequality we get
\begin{equation}                                               \label{F3 1}
|F^3_{\varepsilon}(v)-F^3(v)|
\leq C E
\Big(
\int_0^T\int_Z|(I^{\eta^{(\varepsilon)}}v_s
-I^\eta v_s,\varphi)|^2\,\mu(dz)\,ds
\Big)^{1/2}
\end{equation}
for $v\in \bH^1_p$ with a constant $C$ independent of $\varepsilon$ and $v$. 
By Taylor's formula 
\begin{equation*}                                             
(I^{\eta^{(\varepsilon)}}v_s-I^\eta v_s,\varphi)
=\int_{\bR^d}\int_0^1v_i(\gamma^{\varepsilon}_{\theta}(s,z,x))
(\eta^{(\varepsilon)i}_{s,z}(x)-\eta^i_{s,z}(x))\varphi(x)\,d\theta\,dx
\end{equation*}
where $v_i:=D_iv$ and 
$$
\gamma^{\varepsilon}_{\theta}(s,z,x)
:=x+\theta\eta^{(\varepsilon)}_{s,z}(x)+(1-\theta)\eta_{s,z}(x)
$$
for all $\theta\in(0,1)$, $\varepsilon$ and 
$(s,z,\omega)\in [0,T]\times Z \times \Omega$. 
Then by Lemma \ref{lemma diff2} 
there are positive constants $\varepsilon_0$ 
and $M=M(K,d,m)$ such that for 
$\varepsilon_k\in(0,\varepsilon_0)$ and $\theta\in(0,1)$ 
the function $\gamma^{\varepsilon}_{\theta}(s,z,\cdot)$  
is a $C^{\bar m}$-diffeomorphism on $\bR^d$ and 
\begin{equation*}                                             \label{F3 3}
|D\gamma^{\varepsilon}_{\theta}(s,z,x)|\leq M\quad \text{for $x\in\bR^d$} 
\end{equation*}                                                           
for $(s,z,\omega)\in [0,T]\times Z \times \Omega$.  
Due to Assumption \ref{assumption eta} we have 
\begin{equation*}                                                   \label{F3 4}
|\eta^{(\varepsilon)}_{s,z}(x)-\eta_{s,z}(x)|
\leq \varepsilon \bar{\eta}(z)
\quad
\text{for all $\varepsilon>0$ and $(s,z,\omega,z)\in H_T\times \Omega \times Z$}. 
\end{equation*}
Thus from \eqref{F3 1} using H\"older's inequality we get
$$
|F^3_{\varepsilon}(v)-F^3(v)|
\leq CE
\Big(\int_0^T\int_Z|Dv(\gamma^{\varepsilon}_{\theta}(s,z))|^2_{L_p}|\varphi|^2_{L_q}
\varepsilon^2 \bar{\eta}^2(z)\,\mu(dz)\,ds
\Big)^{1/2}
$$ 
$$
\leq C'\varepsilon|\varphi|_{L_q}|v|_{\bH^1_p}|\bar{\eta}|_{\cL_2}
$$
with a constant $C'$ 
independent of $\varepsilon$ and $v$, which implies
$$
\lim_{\varepsilon\to0}\sup_{|v|_{\bH^1_p}\leq 1}
|F^3_{\varepsilon}(v)-F^3(v)|=0.
$$
\end{proof}
Since $F^i_{\varepsilon}\to F^i$ strongly in ${\bH^1_p}^*$ 
as $\varepsilon\to0$ and $u^{\varepsilon_k}$ to $u$ in $\bH^1_p$
for $\varepsilon_k\to0$, we have 
$$
\lim_{k\to \infty}F(u^{\varepsilon_k})=F(u),\quad 
\lim_{k\to\infty} F^i_k(u^{\varepsilon_k})=F^i(u) \quad \text{for} \quad i=1,2,3.
$$
By well-known properties of mollifications and using 
Lemma \ref{lemma e} it is easy to show
$$
\lim_{k\to\infty}E\int_0^T\zeta_t(\psi^{(\varepsilon_k)},\varphi)\,dt
=E\int_0^T\zeta_t(\psi,\varphi)\,dt,
$$
$$
\lim_{k\to\infty} E\int_0^T\int_0^t \zeta_t(f_s^{(\varepsilon_k)}
,\varphi)\,ds\,dt
=E\int_0^T\int_0^t \zeta_t(f_s,\varphi)
\,ds\,dt,
$$
$$
\lim_{k\to\infty}E\int_0^T\zeta_t\int_0^t(g_s^{(\varepsilon_k)r},\varphi)\,dw_s^r\,dt
=E\int_0^T\zeta_t\int_0^t(g_s^r,\varphi)\,dw_s^r\,dt,
$$
and 
$$
\lim_{k\to\infty}E\int_0^T\zeta_t\int_0^t\int_Z(h_s^{(\varepsilon_k)},\varphi)
\,\tilde{\pi}(dz,ds)\,dt  
=E\int_0^T\zeta_t\int_0^t\int_Z(h_s,\varphi)\,\tilde{\pi}(dz,ds)\,dt.  
$$
Hence, taking $k\to\infty$ in equation \eqref{F1} we get 
\begin{align*}
E\int_0^T\zeta_t(u_t,\varphi)\,dt = & E\int_0^T\zeta_t(\psi,\varphi)\,dt
+E\int_0^T\zeta_t\int_0^t\<\cA u_s,\varphi\>\,ds\,dt\\
&
+E\int_0^T\zeta_t\int_0^t(f_s,\varphi)\,ds\,dt
+E\int_0^T\zeta_t\int_0^t(\cM^r_su_s+g^r_s,\varphi)\,ds\,dt\\
&
+E\int_0^T\zeta_t\int_0^t\int_Z(I^\eta u_s+h_s,\varphi)\,\tilde{\pi}(dz,ds)\,dt
\end{align*}
for every bounded predictable process $\zeta$ 
and every $\varphi\in C^\infty_0$, 
which implies that for every $\varphi\in C^\infty_0$ 
equation \eqref{eq1} holds $P\otimes dt$ almost everywhere. 
Hence, by Lemma \ref{Ito Lp formula} $u$ has an $L_p$-valued cadlag 
modification, denoted also by $u$, which is a generalised solution to 
\eqref{eq1}-\eqref{ini1}. Moreover, from \eqref{ue} we obtain 
\begin{equation*}
|u|_{\bV^n_{r,p}}
\leq 
\liminf_{\varepsilon_k\to0}|u^{\varepsilon_k}|_{\bV^n_{r,p}}
\leq 
N(|\psi|_{\Psi^n_p}+|f|_{\bH^n_p}
+|g|_{\bH^{n+1}_p(l_2)}+|h|_{\bH^{n+i}_p(\cL_{p,2})})
\end{equation*}
for $n=0,1,...,m$ for every integer $r>1$ 
with a constant $N=N(d,p,m,T,K,K_{\xi},K_{\eta})$, 
where $i=1$  when $p=2$ and $i=2$ for $p>2$. 
Letting here $r\to\infty$ we obtain 
\begin{equation}                                        \label{spestimate}
E\esssup_{t\in[0,T]}|u_t|_{H^s_{p}}^p
\leq N(E|\psi|^p_{H^s_p}+E\cK^p_{s,p}(T))
\end{equation}
for $s=0,1,2,...m$ with a constant $N=N(d,p,m,T,K,K_{\xi},K_{\eta})$. 
We already know that $u$ is and $L_p$-valued cadlag process. 
Hence, applying Lemma \ref{lemma w} with $V=H^m_p$, $U=H^0_p$    
and we obtain that $u$ is weakly cadlag 
as an $H^m_p$-valued process, and we can change the essential 
supremum into supremum in \eqref{spestimate}, i.e.,
\begin{equation}                                                                                    \label{sup19}                                    
E\sup_{t\in[0,T]}|u_t|_{H^s_{p}}^p
\leq N(E|\psi|^p_{H^s_p}+E\cK^p_{s,p}(T))
\end{equation}
for $s=0,1,2,...,m$.  

To dispense with Assumption \ref{assumption e1}  
we take a non-negative function $\chi\in C^\infty_0(\bR^d)$ 
such that $\chi(x)=1$ for $|x|\leq 1$ 
and $\chi(x)=0$ for $|x|\geq 2$, and for integers $n\geq1$ define 
$$
\psi^n(x)=\psi(x)\chi_n(x), \quad f^n_t(x,z)=f_t(x,z)\chi_n(x),
$$
$$
g^{nr}_t(x)=g^r_t(x)\chi_n(x), \quad h^n_t(x,z)=h_t(x,z)\chi_n(x)
$$
for all $t\in[0,T]$, $x\in \bR^d$, $z\in Z$, where $\chi_n(x)=\chi(x/n)$. 
Then for each $n$ there is a unique generalised solution 
$u^n=(u^n_t)_{t\in[0,T]}$ to equation \eqref{eq1}-\eqref{ini1} 
with $\psi^n$, $f^n$, $g^n$ and $h^n$ in place of 
$\psi$, $f$, $g$ and $h$,  respectively. Moreover by \eqref{sup19} 
$$
E\sup_{t\leq T}|u^n(t)-u^l(t)|^p_{H^{m}_p}\leq NE|\psi^n-\psi^l|^p_{H^m_p}
$$
\begin{equation*}                                                 
+E\int_0^T
|f^n_s-f^l_s|^p_{H^m_p}
+|g^n_s-g^l|^p_{H^{m+1}_p(l_2)}+|h^n_s-h^l_s|^p_{H^{m+1}_p(\cL_{p,2})}
+{\bf 1}_{p>2}|h^n_s-h^l_s|^p_{H^{m+2}_p(\cL_{p,2})}\,ds
\end{equation*}
with a constant $N=N(T,p,m,K,K_\eta)$. Letting here $l,n\to\infty$ 
we get 
$$
\lim_{n,l\to\infty}E\sup_{t\leq T}|u^n(t)-u^{l}(t)|^p_{H^m_p}=0  
$$
Consequently,  $u^n(t)$ strongly converges to some $u=(u(t))_{t\in[0,T]}$ in $W^m_p$, 
uniformly in $t\in[0,T]$. Hence $u$ is an $L_p$-valued cadlag process,
and it is easy to show that it is a generalised solution to \eqref{eq1}-\eqref{ini1}
such that \eqref{sup19} holds, which implies    
\begin{equation*}                                        \label{spestimate19}
|u_t|^p_{\bU^n_{p,r}}
\leq N(|\psi|^p_{\Psi^n_p}+E\cK^p_{n,p}(T))\quad\text{for $n=0,1,2,...m$} 
\end{equation*}
for integers $r>1$. 
If $m\geq1$ is not an integer, then  we set $\theta=m-\lfloor m\rfloor$  
and by Theorems \ref{theorem i1}, \ref{theorem Lpq interpolation} 
and \ref{theorem H} we have 
$$
\Psi^m_p=[\Psi^{\lfloor m \rfloor}_p,\Psi^{\lceil m \rceil}_p]_{\theta}=L_p(\Omega, H^m_p), 
\quad
[\bH^{\lfloor m \rfloor}_{p},\bH^{\lceil m \rceil}_{p}]_{\theta}=\bH^m_p,
$$
$$
[\bH^{\lfloor m \rfloor+1}_{p}(l_2),\bH^{\lceil m \rceil+1}_{p}(l_2)]_\theta
=\bH^{m+1}_p(l_2),
\quad
[\bH^{\lfloor m \rfloor+i}(\cL_{p,2}),\bH^{\lceil m \rceil+i}(\cL_{p,2})]_\theta
=\bH^{m+i}_{p,2}(\cL_{p,2}) 
$$
for $i=1,2$, and
$$
\mathbb{U}^{m}_{r,p}
=[\mathbb{U}^{\lfloor m \rfloor}_{r,p},\mathbb{U}^{\lceil m \rceil}_{r,p}]_{\theta}
$$
for integers $r>1$. If Assumptions \ref{assumption L},  
through \ref{assumption free} with $m\geq1$ hold then, we have shown above 
that the solution 
operator $\bS$, which maps the data $(\psi,f,g,h)$ 
into the generalised solution $u$ of \eqref{eq1}-\eqref{ini1}  is continuous from 
$$
\Psi^{\lfloor m \rfloor}_p\times\bH^{\lfloor m \rfloor}_p
\times \bH^{\lfloor m \rfloor+1}_p(l_2)
\times \bH^{\lfloor m \rfloor+i}_{p,2}
$$
to $\mathbb{U}^{\lfloor m\rfloor}_{p,r}$, 
and from 
$$
\Psi^{\lceil m \rceil}_p\times\bH^{\lceil m \rceil}_p
\times \bH^{\lceil m \rceil+1}_p(l_2)
\times \bH^{\lceil m \rceil+i}_{p,2}
$$
to $\mathbb {U}^{\lceil m\rceil}_{p,r}$, 
for $i=1$ when $p=2$ and for $i=2$ when $p>2$, 
with operator norms bounded by a constant 
$N=N(d,p,m,T,K,K_{\xi},K_{\eta})$. 
Hence by Theorem \ref{theorem i1} (i) we have  
\begin{equation*}                                      
|u|^p_{\bU^{m}_{r,p}}
\leq N(E|\psi|^p_{H^m_p}+E\cK^p_{m,p}(T))
\end{equation*}
with a constant $N=(p,d,m,T,K,K_{\eta})$. 
In the same way we get 
\begin{equation*}                                        \label{estimate_ps}
|u|^p_{\bU^{s}_{r,p}}
\leq N(E|\psi|^p_{H^s_p}+E\cK^p_{s,p}(T)) \quad\text{for any $s\in[0,m]$}.  
\end{equation*}
Now, like before, letting here $r\to\infty$ we obtain \eqref{spestimate} 
for real numbers $s\in[0,m]$, and using Lemma \ref{lemma w} we get 
that $u$ is an $H^m_p$-valued weakly cadlag process such that 
\eqref{sup19} holds for any $s\in[0,m]$.    
Taking into account that $u$ is a strongly cadlag $L_p$-valued process 
and using the interpolation inequality 
Theorem \ref{theorem i1}(v) with $A_0:=L_p$ and $A_1:=H^m_p$,  
we get that $u$ is strongly cadlag as an $H^s_p$-valued process 
for every real number $s<m$. 

Finally we can prove estimate \eqref{estimate main} 
for $q\in(0, p)$ 
by applying Lemma \ref{lemma sup} in the same way 
as it is used in \cite{GK2003} to prove the corresponding 
supremum estimate.

\smallskip
\noindent 
 {\bf Acknowledgement.} The main result of this paper 
was presented at the conference on ``Harmonic Analysis for Stochastic PDEs" in Delft, 
10-13 July, 2018,  and at  the ``9th International Conference 
on Stochastic Analysis and Its Applications" 
in Bielefeld, 3-7 September, 2018. 
The authors are grateful to the organisers of these meetings, and to Konstantinos Dareiotis, 
Stefan Geiss and Mark Veerar for useful discussions and precious remarks.

\end{document}